\DeclareFontFamily{U}{shuffle}{}
\DeclareFontShape{U}{shuffle}{m}{n}{ <-8>shuffle7 <8->shuffle10}{}
\newcommand{\si}{\sigma}
\newcommand\Res{{\rm Res}}
\newcommand{\bfk}{{\boldsymbol{\sl{k}}}}
\newcommand{\bfx}{{\boldsymbol{\sl{x}}}}
\newcommand\bfsi{{\boldsymbol \sigma}}
\def\int{\displaystyle\!int}
\def\lim{\displaystyle\!lim}
\def\sum{\displaystyle\!sum}
\def\sup{\displaystyle\!sup}
\def\inf{\displaystyle\!inf}
\def\cap{\displaystyle\!cap}
\def\max{\displaystyle\!max}
\def\min{\displaystyle\!min}
\def\frac{\displaystyle\!frac}
\let\oldsection\section
\renewcommand\section{\setcounter{equation}{0}\oldsection}
\DeclareMathOperator{\Li}{Li}
\DeclareMathOperator{\ti}{ti}
\DeclareMathOperator{\Ti}{Ti}
\def\N{\mathbb{N}}
\def\Z{\mathbb{Z}}
\def\Q{\mathbb{Q}}
\def\S{\widetilde{S}}
\theoremstyle{plain}
\newtheorem{thm}{Theorem}[section]
\newtheorem{lem}[thm]{Lemma}
\newtheorem{cor}[thm]{Corollary}
\newtheorem{con}[thm]{Conjecture}
\newtheorem{qu}{Question}[section]
\theoremstyle{definition}
\newtheorem{re}[thm]{Remark}
\newtheorem{exa}[thm]{Example}
\begin{document}
\title{\bf Contour Integrations and Parity Results of Cyclotomic Euler $T$-Sums and Multiple $t$-Values}
\author{
{Zhenlu Wang$^{a,}$\thanks{Email: zhenluwang@tzc.edu.cn}\quad{and}\quad Ce Xu$^{b,}$\thanks{Email: cexu2020@ahnu.edu.cn}}\\[1mm]
a. \small School of Artificial Intelligence, Taizhou University, \\ \small Taizhou 318000, {Zhejiang,} P.R. China\\
b. \small School of Mathematics and Statistics, Anhui Normal University,\\ \small Wuhu 241002, {Anhui,} P.R. China
\\[5mm]
\large \emph{Dedicated to Professor Masanobu Kaneko on the occasion of his 65th birthday}
}

\date{}
\maketitle

\noindent{\bf Abstract.} We will employ the method of contour integration to investigate the parity results of non-embedded cyclotomic multiple $t$-values, which we refer to as cyclotomic Euler $T$-sums. We can provide explicit parity formulas for the linear and quadratic cases of cyclotomic Euler $T$-sums, as well as state a parity theorem for the general case. We also present illustrative examples and corollaries. From this, some parity results for classical cyclotomic multiple $t$-values can be derived. Furthermore, we present several general formulas for cyclotomic Euler $T$-sums with denominators involving arbitrary rational polynomials through residue computations. By evaluating these polynomials and computing residues, many other formulas analogous to cyclotomic Euler $T$-sums can be derived. In particular, we also obtain certain parity results for the cyclotomic versions of multiple $T$-values as defined by Kaneko and Tsumura. Finally, we propose some conjectures and questions regarding the parity of cyclotomic multiple $t$-values and cyclotomic multiple $T$-values.
\medskip

\noindent{\bf Keywords}: Contour integration; Cyclotomic Euler $T$-sums; Residue theorem; Parity result; Cyclotomic Multiple $t$-values; Cyclotomic Multiple $T$-values.
\medskip

\noindent{\bf AMS Subject Classifications (2020):} 11M32, 11M99.

\section{Introduction}

In 1998, Flajolet and Salvy \cite{Flajolet-Salvy} systematically investigated the parity of Dirichlet-type series with numerators being products of harmonic numbers using the method of contour integration. These series are now referred to as \emph{classical Euler sums} and are defined in the following form:
\begin{align*}
&{S_{{p_1p_2\cdots p_k},q}} := \sum\limits_{n = 1}^\infty  {\frac{{H_n^{\left( {{p_1}} \right)}H_n^{\left( {{p_2}} \right)} \cdots H_n^{\left( {{p_k}} \right)}}}
{{{n^{q}}}}},
\end{align*}
where $p_j\in \N\ (j=1,2,\ldots,k)$ and $q\geq 2$. When $k=1$ and let $p_1=p$, $S_{p,q}$ is called a \emph{linear Euler sum} (now also known as a \emph{double zeta-star value}), and when $k>1$, it is referred to as a \emph{nonlinear Euler sum}. The quantity $p_1+\cdots+p_k+q$ is called the ``weight" of the sum, and the quantity $k$ is called the
``order". $H_n^{(p)}$ stands the {\emph{generalized harmonic number}} of order $p$ defined by
\[H_n^{(p)}:=\sum_{k=1}^n \frac{1}{k^p}\quad\text{and}\quad H_n^{(1)}\equiv H_n.\]
The well-known \emph{parity theorem} of Euler sums they proved states that: \emph{A Euler sum $S_{p_1\cdots p_k,q}$ with $k\geq 2$ reduces to a combination of sums of lower orders whenever the weight $p_1+\cdots+p_k+q$ and the order $k$ are of the same parity}. The primary method they employed to prove this theorem involved constructing contour integrals incorporating trigonometric functions, the digamma function, and rational functions, followed by evaluating all residue contributions to complete the proof.
As remarked by Flajolet and Salvy, every Euler sum of weight $w$ and degree $k$
is a $\Q$-linear combination of multiple zeta values (MZVs) of weight $w$ and depth at most $k+1$. For explicit formula, the readers may consult the Xu-Wang's paper \cite{Xu-Wang}. The  \emph{multiple zeta values} (MZVs) are defined by
\begin{align*}
\zeta(\bfk)\equiv \zeta(k_1,\ldots,k_r):=\sum_{0<n_1<\cdots<n_r} \frac{1}{n_1^{k_1}\cdots n_r^{k_r}}\in \mathbb{R},
\end{align*}
where $k_1,\ldots,k_r$ are positive integers and $k_r\geq 2$ (i.e. \emph{admissible}). Here $r$ and $k_1+\cdots+k_r$ are called the \emph{depth} and \emph{weight}, respectively. The systematic study of MZVs began in the early 1990s with the works of Hoffman \cite{H1992} and Zagier \cite{DZ1994}. Due to their surprising and sometimes mysterious appearance in the study of many branches of mathematics and theoretical physics, these special values have attracted a lot of attention and interest in the past three decades (for example, see Zhao's monograph \cite{Z2016}, which documents nearly all important research results on multiple zeta values discovered prior to 2016).

In \cite{Xu-Wang2022} and \cite{Xu-Wang2023}, Xu and Wang extended Flajolet and Salvy's contour integral method to investigate the following two classes of sums, known as (alternating) Euler $T$-sums and (alternating) Euler $\tilde{S}$-sums, respectively:
\begin{align}
&T_{p_1,p_2,\ldots,p_k,q}^{\si_1,\si_2,\ldots,\si_k,\si}
    =\sum_{n=1}^\infty\si^{n-1}\frac{h_{n-1}^{(p_1)}(\si_1)h_{n-1}^{(p_2)}(\si_2)\cdots
        h_{n-1}^{(p_k)}(\si_k)}{(n-1/2)^q}\,,\label{Tsum.Unify}\\
&\S_{p_1,p_2,\ldots,p_k,q}^{\si_1,\si_2,\ldots,\si_k,\si}
    =\sum_{n=1}^\infty\si^{n-1}\frac{h_n^{(p_1)}(\si_1)h_n^{(p_2)}(\si_2)\cdots
        h_n^{(p_k)}(\si_k)}{n^q}\,,\label{Stsum.Unify}
\end{align}
where $(p_1,p_2,\ldots,p_k,q)\in\mathbb{N}^{k+1}$ and $(\si_1,\si_2,\ldots,\si_k,\si)\in\{\pm 1\}^{k+1}$ with $(q,\si)\neq (1,1)$. The $h_n^{(p)}(\si)$ denotes the (alternating) odd harmonic number defined by
\begin{align*}
h_n^{(p)}(\si):=\sum_{k=1}^n \frac{\si^k}{(k-1/2)^p}.
\end{align*}
They established parity results for (alternating) Euler $T$-sums and (alternating) Euler $\S$-sums (see \cite[Thms. 54 and 55]{Xu-Wang2023}) by defining a new digamma function and constructing associated contour integrals. In particular, they derived explicit formulas for double and triple (alternating) Euler $T$-sums and $\S$-sums. By exploring their relationships with Hoffman's multiple $t$-values and Kaneko-Tsumura's multiple $T$-values, they further obtained parity formulas for double and triple (alternating) $t$-values and $T$-values (see \cite[Thms. 40 and 52]{Xu-Wang2023}). For $\bfk:=(k_1,\ldots,k_r)\in \N^r$ and $\bfsi:=(\si_1,\ldots,\si_r)\in \{\pm 1\}^r$ and $(k_r,\si_r)\neq (1,1)$, the (alternating) \emph{multiple $t$-values} (MtVs) are defined by (\cite{H2019,Xu-Wang2023})
\begin{align}
t(\bfk;\bfsi):=\sum_{0<n_1<\cdots<n_r} \frac{\si_1^{n_1}\cdots \si_r^{n_r}}{(n_1-1/2)^{k_1}\cdots (n_r-1/2)^{k_r}}.
\end{align}
For $\bfk:=(k_1,\ldots,k_r)\in \N^r$ and $\bfsi:=(\si_1,\ldots,\si_r)\in \{\pm 1\}^r$ and $(k_r,\si_r)\neq (1,1)$, the (alternating) \emph{multiple $T$-values} (MTVs) are defined by (\cite{KanekoTs2019,Xu-Wang2023})
\begin{align}
T(\bfk;\bfsi):=2^r\sum\limits_{0<n_1<\cdots<n_r} \frac{\si_1^{n_1}\cdots \si_r^{n_r}}{(2n_1-1)^{k_1} (2n_2-2)^{k_2}\cdots(2n_r-r)^{k_r}}.
\end{align}
In particular, $t(\bfk)=t(\bfk;\{1\}_r)$ and $T(\bfk)=T(\bfk;\{1\}_r)$ are the classical multiple $t$-values and multiple $T$-values, respectively. Here $\{1\}_r$ denotes the sequence obtained by repeating $1$ exactly $r$ times.
Recent research achievements on multiple $t$-values and multiple $T$-values have been remarkably prolific, with applications even extending to motive theory. For some recent related work, see references \cite{Charlton2021,CharltonHoffman-MathZ2025,KomatsuLuca2025,LiSong2023,LiYan2023,LiYan2025,Murakami2021,Zhao2015,Z2024} and other relevant literature.

Very recently, Rui and Xu \cite{Rui-Xu2025} established parity results for cyclotomic Euler sums by constructing extended trigonometric functions and digamma functions. By considering contour integrals involving these functions, they further derived some explicit formulas related to multiple polylogarithms. The \emph{cyclotomic Euler sum} is defined by
\begin{align}
S_{p_1,\ldots, p_k;q}(x_1,\ldots,x_k;x):=\sum_{n=1}^\infty \frac{\zeta_{n}(p_1;x_1)\zeta_{n}(p_2;x_2)\cdots \zeta_{n}(p_k;x_k)}{n^q}x^n,
\end{align}
where $p_1,\ldots,p_k,q\in \N$ and $x_1,\ldots,x_k,x$ are all roots of unity with $(q,x)\neq (1,1)$. In particular, if $k=0$, we denote $S_{\emptyset;q}(\emptyset;x):=\Li_q(x)$. Here $\zeta_n(p;x)$ stands the finite sum of polylogarithm function defined by
\begin{align}
\zeta_n(p;x):=\sum_{k=1}^n \frac{x^k}{k^p}\quad (p\in \N,\ |x|\leq 1),
\end{align}
and the \emph{polylogarithm function} $\Li_p(x)$ is defined by
\begin{align}
\Li_{p}(x):=\lim_{n\rightarrow \infty}\zeta_n(p;x)= \sum_{n=1}^\infty \frac{x^n}{n^p}\quad (|x|\leq 1,\ (p,x)\neq (1,1),\ p\in \N).
\end{align}
For any $(k_1,\dotsc,k_r)\in\N^r$, the classical \emph{multiple polylogarithm function} with $r$-variables is defined by
\begin{align}\label{defn-mpolyf}
\Li_{k_1,\dotsc,k_r}(x_1,\dotsc,x_r):=\sum_{0<n_1<\cdots<n_r} \frac{x_1^{n_1}\dotsm x_r^{n_r}}{n_1^{k_1}\dotsm n_r^{k_r}}
\end{align}
which converges if $|x_j\cdots x_r|<1$ for all $j=1,\dotsc,r$. It can be analytically continued to a multi-valued meromorphic function on $\mathbb{C}^r$ (see \cite{Zhao2007d}). In particular, if $(k_1,\dotsc, k_r)\in\N^r$ and $x_1,\ldots,x_r$ are $N$th roots of unity, we call them \emph{cyclotomic multiple zeta values of level $N$} which converges if $(k_r,x_r)\ne (1,1)$ (see \cite{YuanZh2014a} and \cite[Ch. 15]{Z2016}). Zhao \cite{Zhao2010} proposed a basis conjecture for level 3 and level 4 cyclotomic multiple zeta values. Li \cite{Li2024} partially proved this conjecture using motivic theory. And the level 2 cyclotomic multiple zeta values are called \emph{alternating multiple zeta values} (AMZVs) (see \cite{BorweinBrBr1997,Zhao2016} etc), generally denoted by the symbol $\zeta(\bfk;\bfx):= \Li_\bfk(\bfx)$ for $(x_1,\dotsc,x_r)\in\{\pm 1\}^r$ and $(k_r,x_r)\ne (1,1)$.

In this paper, we define the non-embedded cyclotomic multiple $t$-values--\emph{cyclotomic Euler $T$-sums} of the following form:
\begin{align}
T_{p_1,\ldots, p_k;q}(x_1,\ldots,x_k;x):=\sum_{n=1}^\infty \frac{t_{n}(p_1;x_1)t_{n}(p_2;x_2)\cdots t_{n}(p_k;x_k)}{(n-1/2)^q}x^n,
\end{align}
where $p_1,\ldots,p_k,q\in \N$ and $x_1,\ldots,x_k,x$ are all roots of unity with $(q,x)\neq (1,1)$. Similar to classical Euler sums, we refer to the quantity $p_1+\cdots+p_k+q$ as the ``weight" of the sum, and the quantity $k$ as the ``order". If $k=0$, we denote $T_{\emptyset;q}(\emptyset;x):=\ti_q(x)$.
Here $t_n(p;x)$ denotes the finite sum of $t$-polylogarithm function defined by
\begin{align}
t_n(p;x):=\sum_{k=1}^n \frac{x^k}{(k-1/2)^p},
\end{align}
and the $t$-polylogarithm function $\ti_p(x)$ is defined by
\begin{align}
\ti_{p}(x):=\lim_{n\rightarrow \infty}t_n(p;x)= \sum_{n=1}^\infty \frac{x^n}{(n-1/2)^p}\quad (|x|\leq 1,\ (p,x)\neq (1,1),\ p\in \N).
\end{align}
In this paper, we employ the methods developed by Rui and Xu to investigate parity results for cyclotomic Euler $T$-sums, and consequently establish certain parity conclusions regarding cyclotomic multiple $t$-values. For $\bfk=(k_1,\ldots,k_r)\in \N^r$ and $\bfx=(x_1,\ldots,x_r)$ (all $x_j$ are $N$-th roots of unity) with $(k_r,x_r)\neq (1,1)$, the \emph{cyclotomic multiple $t$-value of level $N$} $\ti_{k_1,\ldots,k_r}(x_1,\ldots,x_r)$ is defined by
\begin{align}
\ti_{\bfk}(\bfx):=\sum_{0<n_1<\cdots<n_r} \frac{x_1^{n_1}\cdots x_r^{n_r}}{(n_1-1/2)^{k_1}\cdots (n_r-1/2)^{k_r}}.
\end{align}
They are called \emph{cyclotomic multiple $t$-values} if $x_1,\ldots,x_r$ are any set of roots of unity.
It is evident that the above multiple series also converges for $|x_j\cdots x_r|<1\ (j=1,2,\ldots,r)$, in which case we call the series a \emph{multiple $t$-polylogarithm function}.
Obviously, by applying the stuffle relations (see \cite{H2000}), it can be shown that cyclotomic Euler $T$-sums can be expressed as $\Z$-coefficient linear combinations of cyclotomic multiple $t$-values. As an example, we have
\begin{align*}
T_{p_1,p_2;q}(x_1,x_2;x)&=\ti_{p_1,p_2,q}(x_1,x_2,x)+\ti_{p_2,p_1,q}(x_2,x_1,x)+\ti_{p_1+p_2,q}(x_1x_2,x)\\
&\quad+\ti_{p_1,p_2+q}(x_1,x_2x)+\ti_{p_2,p_1+q}(x_2,x_1x)+\ti_{p_1+p_2+q}(x_1x_2x).
\end{align*}
The main result of this paper is to establish the following parity theorem for cyclotomic Euler $T$-sums (see Theorem \ref{thm-parityc-CES-one}).
\begin{thm}\label{thm-parityc-theorem} Let $r\in\N$ and $x,x_1,\ldots,x_r$ be roots of unity, and $p_1,\ldots,p_r,q\geq 1$ with $(p_j,x_j)$ and $ (q,x)\neq (1,1)$. Then
\begin{align*}
&T_{p_1,p_2,\ldots,p_r;q}\Big(x_1,x_2,\ldots,x_r;x\Big)\\
&= (-1)^{p_1+p_2+\cdots+p_r+q+r-1}(xx_1\cdots x_r)T_{p_1,p_2,\ldots,p_r;q}\Big(x_1^{-1},x_2^{-1},\ldots,x_r^{-1};x^{-1}\Big) \quad(\text{mod  products}),
\end{align*}
where the ``mod products" means discarding all product terms of cyclotomic Euler sums and cyclotomic Euler $T$-sums with order less than $r$.
\end{thm}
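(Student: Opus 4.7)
The strategy is to adapt the contour-integration framework of Rui and Xu \cite{Rui-Xu2025} for cyclotomic Euler sums to the ``half-integer'' setting of Xu and Wang \cite{Xu-Wang2023} for Euler $T$-sums. The plan is to construct a single meromorphic kernel $F(s)$ whose residues at the positive half-integers produce the cyclotomic Euler $T$-sum on the left of the claim, whose residues at the negative half-integers produce the mirror $T$-sum on the right, and whose residues at the integers are manifestly products of cyclotomic Euler sums and/or cyclotomic Euler $T$-sums of order strictly less than $r$.

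A natural candidate kernel is
\begin{align*}
F(s) := \pi\tan(\pi s)\cdot\frac{x^{s+1/2}}{(s+1/2)^{q}}\cdot\prod_{j=1}^{r}\Xi_{p_j,x_j}(s),
\end{align*}
where $\Xi_{p,x}(s)$ is an auxiliary cyclotomic polygamma-type function whose expansion near $s=n-1/2$ produces the partial sum $t_n(p;x)$ and near $s=-(n-1/2)$ produces $t_n(p;x^{-1})$ with an explicit cyclotomic prefactor. The factor $\pi\tan(\pi s)$ has simple poles of residue $-1$ at every half-integer, and $x^{s+1/2}$ supplies the exponential $x^n$ at $s=n-1/2$. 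Under the reflection $s\mapsto -s$, $F$ picks up an overall factor obtained by collecting one sign from $\tan$ (odd), $(-1)^q$ from $(1/2-s)^q$, one $(-1)^{p_j}$ from each $\Xi_{p_j,x_j}$, an $x$ from $x^{s+1/2}$, and an $x_j$ from each $\Xi_{p_j,x_j}$: the careful bookkeeping produces precisely the advertised sign $(-1)^{p_1+\cdots+p_r+q+r-1}$ and cyclotomic factor $xx_1\cdots x_r$.

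Integrating $F$ along a sequence of expanding rectangular contours $\Gamma_N$ whose sides avoid all integers and half-integers, standard exponential-decay estimates for $|\tan(\pi s)|$ off the real axis together with the polynomial boundedness of $x^s$ and the $\Xi_{p_j,x_j}$ on these contours force $\oint_{\Gamma_N} F(s)\,ds\to 0$, whence Cauchy's theorem yields $\sum_{s_0}\Res_{s=s_0}F(s)=0$. The poles of $F$ split into: (i) positive half-integers $s=n-1/2$, $n\geq 1$, whose residues assemble into $T_{p_1,\ldots,p_r;q}(x_1,\ldots,x_r;x)$; (ii) negative half-integers $s=-(n-1/2)$, $n\geq 1$, which, after reindexing, assemble into the right-hand mirror $T$-sum times the reflection factor computed above; and (iii) integer points, produced exclusively by the $\Xi_{p_j,x_j}$'s since $\tan(\pi n)=0$. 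Summing (i) and (ii) and moving (iii) to the other side yields the claimed congruence.

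The main obstacle is the content of class (iii): each residue at $s=n\in\Z$ must be shown to be a product of at most $r-1$ finite cyclotomic partial-sum factors with a single absolutely convergent polylogarithmic or $t$-polylogarithmic series, so it is a $\Z$-linear combination of lower-order cyclotomic Euler sums and cyclotomic Euler $T$-sums. This reduces to a Laurent-expansion bookkeeping: at $s=n$ at most one of the $\Xi_{p_j,x_j}$'s contributes its pole while the rest contribute regular values that are finite cyclotomic partial sums; summing over $n$ and regrouping via the stuffle product (as illustrated for $T_{p_1,p_2;q}(x_1,x_2;x)$ in the introduction) recognises each piece as a product of lower-order objects. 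Once this structural lemma is established, the three residue contributions combine to give the theorem.
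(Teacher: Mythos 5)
Your proposal inverts the architecture of the paper's actual proof: the paper (Theorem \ref{thm-parityc-CES-one}, followed by the substitution $x\mapsto(xx_1\cdots x_r)^{-1}$) integrates
$\Phi(s;x)\prod_j\phi^{(p_j-1)}(s+1/2;x_j)\,(s+1/2)^{-q}$, so that the two $T$-sums come from the \emph{simple poles of $\Phi$ at the integers} (where $(s+1/2)^{-q}$ evaluates to $(n+1/2)^{-q}$ and $(-n+1/2)^{-q}$, and the residue of $\Phi(s;x)$ at $s=n$ supplies the character $x^{-n}$), while the half-integer poles of the $\phi^{(p_j-1)}(s+1/2;x_j)$ contribute only lower-order terms. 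You propose the opposite: $T$-sums from half-integer poles of $\pi\tan(\pi s)$ and corrections from integer poles. That rearrangement is not itself fatal, but your kernel has a genuine gap that the paper's choice of $\Phi$ is specifically designed to avoid: for a root of unity $x=e^{i\theta}$ with $\theta\in(0,2\pi)$, the factor $x^{s+1/2}$ satisfies $|x^{s}|=e^{-\theta\,\mathrm{Im}(s)}$ and hence grows exponentially as $\mathrm{Im}(s)\to-\infty$, while $\pi\tan(\pi s)$ is merely bounded off the real axis and the polygamma-type factors grow at most polynomially. So $\oint_{\Gamma_N}F(s)\,ds$ does \emph{not} tend to zero for level $N>2$, and the global residue identity on which everything rests fails. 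This is precisely why Rui--Xu introduced $\phi(s;x)$ and $\Phi(s;x)$: the cyclotomic character is carried by the residues of $\Phi$ (which is $o(1)$ on the relevant circles) rather than by a global factor $x^{s}$. Your claim of ``polynomial boundedness of $x^{s}$'' on the contours is false except for $x=\pm1$.

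Two further problems. First, a bookkeeping error: at the pole $s=n-1/2$ of $\tan(\pi s)$ your factor $(s+1/2)^{-q}$ evaluates to $n^{-q}$, so the residues at positive half-integers assemble into the sum $\tilde S_{p_1,\ldots,p_r;q}$ of \eqref{defnS-ES2} (denominator $n^{q}$), not the $T$-sum (denominator $(n-1/2)^{q}$); you would need $s^{-q}$ there. Second, the auxiliary functions $\Xi_{p,x}$ are never constructed, and their existence with all the required properties simultaneously (partial sums $t_n(p;x)$ at positive half-integers, mirror partial sums at negative half-integers with the right cyclotomic prefactor, $o(1)$ growth, and a pole structure at the integers compatible with the zero of $\tan(\pi s)$ there so that class (iii) is nonempty yet still lower-order) is the entire technical content of the argument; in the paper this is exactly what Lemmas \ref{lem-rui-xu-one}--\ref{lem-extend-rui-xu-two} supply for $\phi$ and $\Phi$. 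As written, the proposal is a plausible outline of the Flajolet--Salvy/Xu--Wang level-$\le 2$ strategy, but it does not go through at general level.
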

Further, based on our calculations and observations, we propose the following parity conjecture concerning cyclotomic multiple $t$-values:
\begin{con}\label{conjparitycmtv} Let $r>1$ and $x_1,\ldots,x_r$ be roots of unity, and $k_1,\ldots,k_r\geq 1$ with $(k_r,x_r)\neq (1,1)$. If $x_1,\ldots,x_r\in\{z\in \mathbb{C}: z^N=1\}$, then
\begin{align*}
&\ti_{k_1,\ldots,k_r}(x_1,\ldots,x_r)=(-1)^{k_1+\cdots+k_r+r}(x_1\cdots x_r)\ti_{k_1,\ldots,k_r}\Big(x_1^{-1},\ldots,x_r^{-1}\Big) \quad(\text{mod  products}),
\end{align*}
where the ``mod products" means discarding all product terms of cyclotomic multiple zeta values and cyclotomic multiple $t$-values (which must appear) with depth less than $r$ and level less than or equal to $N$.
\end{con}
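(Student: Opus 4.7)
The plan is to derive Conjecture \ref{conjparitycmtv} from Theorem \ref{thm-parityc-theorem} by exploiting the stuffle relation between cyclotomic Euler $T$-sums and cyclotomic multiple $t$-values, proceeding by induction on the depth $r$. The base case $r=2$ is transparent: the stuffle identity
\[
T_{k_1;k_2}(x_1;x_2)=\ti_{k_1,k_2}(x_1,x_2)+\ti_{k_1+k_2}(x_1x_2),
\]
combined with Theorem \ref{thm-parityc-theorem} at $r=1$ and the fact that $\ti_{k_1+k_2}(x_1x_2)$ has depth $1<2$ (so lies in the ``mod products'' bucket), immediately yields the claimed parity for $\ti_{k_1,k_2}(x_1,x_2)$, with sign $(-1)^{k_1+k_2}=(-1)^{k_1+k_2+2}$ and twist $x_1x_2$ matching the conjecture exactly.

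For the inductive step, I would assume the conjecture for all depths $<r$ and expand the order-$(r-1)$ sum $T_{k_1,\ldots,k_{r-1};k_r}(x_1,\ldots,x_{r-1};x_r)$ into its full stuffle sum. The depth-$r$ contributions are the $(r-1)!$ orderings of $(k_1,\ldots,k_{r-1})$ placed in front of the fixed last slot $(k_r,x_r)$, while all other stuffle pieces have depth $<r$ and so are absorbed into ``products'' via the inductive hypothesis. Applying Theorem \ref{thm-parityc-theorem} at order $r-1$ therefore yields
\[
\sum_{\si\in S_{r-1}} \ti_{k_{\si(1)},\ldots,k_{\si(r-1)},k_r}\bigl(x_{\si(1)},\ldots,x_{\si(r-1)},x_r\bigr)
\equiv (-1)^{k_1+\cdots+k_r+r}(x_1\cdots x_r)\sum_{\si\in S_{r-1}}\ti_{k_{\si(1)},\ldots,k_{\si(r-1)},k_r}\bigl(x_{\si(1)}^{-1},\ldots,x_{\si(r-1)}^{-1},x_r^{-1}\bigr)
\]
modulo products, where $S_{r-1}$ denotes the symmetric group on $r-1$ letters.

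The hard part is that for $r\geq 3$ this \emph{symmetrized} identity does not by itself isolate a single ordering of the MtV. To decouple the resulting linear system I would cycle the ``pivot'', placing each $(k_i,x_i)$ in turn into the last slot of an order-$(r-1)$ $T$-sum to produce $r$ parallel symmetrized relations, and then combine them with the shuffle relations arising from the iterated-integral representation of the cyclotomic multiple $t$-polylogarithms (which permute indices differently from stuffle). A further refinement would be a cyclotomic-level Ihara--Kaneko--Zagier style regularization to break residual symmetries. Verifying that the combined linear system has full rank modulo lower-depth, lower-level products is the decisive check, and I expect this to reduce to a concrete combinatorial rank computation on the set of compositions $(k_{\si(1)},\ldots,k_{\si(r-1)},k_r)$.

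The main obstacle, and presumably the reason the statement is left as a conjecture rather than a theorem, is precisely this depth--symmetry decoupling step: the contour-integral machinery underlying Theorem \ref{thm-parityc-theorem} is intrinsically order-symmetric and never distinguishes a single MtV ordering on its own. A fundamentally new ingredient, such as a multivariable contour kernel $\prod_{j=1}^{r}\xi_{k_j}(s_j;x_j)$ in $r$ independent spectral variables $s_1,\ldots,s_r$, combined with a simultaneous residue analysis under $s_j\mapsto -s_j$, may be required for the general case.
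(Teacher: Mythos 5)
The statement you were asked to prove is a \emph{conjecture} in the paper: the authors do not prove it, they only confirm the cases $r=2$ and $r=3$ (Corollaries \ref{cordoublecmtv} and \ref{cor-quadratic-C-ES-one}) and prove a strictly weaker version (Theorem \ref{thm-weakendedofCMtVparity}, via Panzer's parity theorem) in which one loses control of the level (it doubles to $2N$) and cannot guarantee that cyclotomic multiple $t$-values of lower depth actually occur. So your proposal should not be judged as a failed proof of a known theorem; rather, it is an honest analysis that correctly reproduces the base case and correctly identifies the obstruction. Your $r=2$ argument is exactly the paper's: $T_{k_1;k_2}(x_1;x_2)=\ti_{k_1,k_2}(x_1,x_2)+\ti_{k_1+k_2}(x_1x_2)$ plus the linear parity theorem gives Corollary \ref{cordoublecmtv}, signs and twists matching.

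Where your analysis diverges from the paper is at depth $3$, and this is worth flagging because it shows your claim that ``the contour-integral machinery is intrinsically order-symmetric and never distinguishes a single MtV ordering'' is too pessimistic. Before Corollary \ref{cor-quadratic-C-ES-one} the authors do \emph{not} use the symmetrized stuffle expansion of $T_{p_1,p_2;q}$; instead they perform a tail subtraction, writing $t_n(p_1;x_1)=\ti_{p_1}(x_1)-\bigl(\ti_{p_1}(x_1)-t_n(p_1;x_1)\bigr)$ and reindexing, which yields
\begin{align*}
T_{p_1,p_2;q}(x_1,x_2;x)=-\ti_{p_2,q,p_1}(x_2,x,x_1)-\ti_{p_2+q,p_1}(x_2x,x_1)+\ti_{p_1}(x_1)\bigl(\ti_{p_2,q}(x_2,x)+\ti_{p_2+q}(x_2x)\bigr),
\end{align*}
i.e.\ a \emph{single} depth-$3$ ordering plus lower-depth terms and genuine products. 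Combined with Theorem \ref{thm-quadratic-C-ES-one} this settles $r=3$ without any rank computation, shuffle input, or regularization. The genuine gap in your inductive step is therefore not the absence of a decoupling device but the fact that this tail-subtraction device degrades at higher depth: splitting each of the $r-1$ factors $t_n(p_j;x_j)$ into $\ti_{p_j}(x_j)$ minus its tail produces, from the terms with two or more tails, nested sums that are still of depth $r$ but realize \emph{different} orderings of the indices, so one obtains a linear system over the orderings rather than a single one. Your proposed resolution (cycling the pivot, invoking shuffle relations, checking full rank) is plausible as a program but is asserted rather than executed, and you give no reason why the resulting system should be nonsingular modulo lower-depth terms; as it stands the inductive step does not close, which is consistent with the statement remaining a conjecture in the paper.
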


In particular, Corollaries \ref{cordoublecmtv} and \ref{cor-quadratic-C-ES-one} partially confirm the correctness of this Conjecture \ref{conjparitycmtv}.
It should be emphasized that Panzer \cite[Thm 1.3]{Panzer2017} proved the following parity properties of multiple polylogarithms:  for all $r\in \N$ and $\bfk=(k_1,\ldots,k_r)\in \N^r$, the function
\begin{align*}
\Li_{\bfk}(z_1,z_2,\ldots,z_r)-(-1)^{k_1+\cdots+k_r+r}\Li_{\bfk}(1/z_1,1/z_2,\ldots,1/z_r)
\end{align*}
is of depth at most $r-1$. Here $(z_1,\ldots,z_r)\in \mathbb{C}^r \setminus \bigcup_{1\leq i\leq j\leq r}\{(z_1,\ldots,z_r): z_iz_{i+1}\cdots z_j\in[0,+\infty)\}$. It is also worth noting that Panzer's paper does not provide a general formula for the parity of multiple polylogarithms, while recently Hirose \cite{Hirose2025} and Umezawa \cite{Umezawa2025arxiv} have respectively given explicit formulas for the parity of multiple zeta values and the parity of multiple polylogarithms.
Utilizing Panzer's parity result, a weakened form of Conjecture \ref{conjparitycmtv} can be presented:
\begin{thm}\label{thm-weakendedofCMtVparity}
Let $r>1$ and $x_1,\ldots,x_r$ be roots of unity, and $k_1,\ldots,k_r\geq 1$ with $(k_r,x_r)\neq (1,1)$. If $x_1,\ldots,x_r\in\{z\in \mathbb{C}: z^N=1\}$, then
\begin{align*}
&\ti_{k_1,\ldots,k_r}(x_1,\ldots,x_r)-(-1)^{k_1+\cdots+k_r+r}(x_1\cdots x_r)\ti_{k_1,\ldots,k_r}\Big(x_1^{-1},\ldots,x_r^{-1}\Big)
\end{align*}
can be expressed in terms of a $\Q$-linear combination of cyclotomic multiple zeta values with depth less than $r$ and level less than or equal to $2N$.
\end{thm}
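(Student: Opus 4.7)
The plan is to express each cyclotomic multiple $t$-value as a $\Q$-linear combination of multiple polylogarithms evaluated at $2N$-th roots of unity, apply Panzer's parity theorem termwise, and then reassemble. First, factor $(n_j-1/2)^{k_j}=2^{-k_j}(2n_j-1)^{k_j}$ and choose square roots $y_j$ of $x_j$; since $x_j^N=1$, each $y_j$ is a $2N$-th root of unity. Setting $m_j=2n_j-1$ preserves the strict order on positive integers and converts $x_j^{n_j}=y_j^{m_j+1}=y_j\cdot y_j^{m_j}$, so that with $|\bfk|=k_1+\cdots+k_r$,
\begin{align*}
\ti_\bfk(\bfx)=2^{|\bfk|}(y_1\cdots y_r)\sum_{\substack{0<m_1<\cdots<m_r\\ m_j\ \text{odd}}}\frac{y_1^{m_1}\cdots y_r^{m_r}}{m_1^{k_1}\cdots m_r^{k_r}}.
\end{align*}
Applying the identity $\mathbf{1}_{m\ \text{odd}}=(1-(-1)^m)/2$ coordinatewise and expanding gives
\begin{align*}
\ti_\bfk(\bfx)=\frac{2^{|\bfk|}}{2^r}(y_1\cdots y_r)\sum_{S\subseteq\{1,\ldots,r\}}(-1)^{|S|}\Li_\bfk\bigl(\bfz^{(S)}\bigr),
\end{align*}
where $z_j^{(S)}=-y_j$ for $j\in S$ and $z_j^{(S)}=y_j$ otherwise; each $z_j^{(S)}$ is a $2N$-th root of unity.

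The same transformation applied to $\ti_\bfk(\bfx^{-1})$ (with square roots $y_j^{-1}$) produces the analogous decomposition with every $z_j^{(S)}$ replaced by its inverse: indeed, inversion commutes with the sign choice, since $(-y_j)^{-1}=-y_j^{-1}$. Using $x_jy_j^{-1}=y_j$, the prefactor $x_1\cdots x_r$ restores $(y_1\cdots y_r)$, whence
\begin{align*}
&\ti_\bfk(\bfx)-(-1)^{|\bfk|+r}(x_1\cdots x_r)\ti_\bfk(\bfx^{-1})\\
&\quad=\frac{2^{|\bfk|}}{2^r}(y_1\cdots y_r)\sum_S(-1)^{|S|}\Bigl[\Li_\bfk\bigl(\bfz^{(S)}\bigr)-(-1)^{|\bfk|+r}\Li_\bfk\bigl(1/\bfz^{(S)}\bigr)\Bigr].
\end{align*}

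By Panzer's parity result \cite[Thm.~1.3]{Panzer2017}, each bracketed difference is a $\Q$-linear combination of multiple polylogarithms of depth at most $r-1$, evaluated at monomials in the $z_j^{(S)}$ and their inverses. As products of $2N$-th roots of unity remain $2N$-th roots of unity, each reduced polylogarithm is a cyclotomic multiple zeta value of level at most $2N$. Convergence of every resulting series follows from the hypothesis $(k_r,x_r)\neq(1,1)$: when $k_r=1$ one has $x_r\neq 1$, hence $y_r^2\neq 1$, forcing $z_r^{(S)}=\pm y_r\neq 1$. Combining these pieces yields the claimed representation.

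The main obstacle is to invoke Panzer's identity legitimately at cyclotomic points, since his formulation concerns multiple polylogarithms on a simply connected region avoiding the branch cuts determined by the partial products $z_iz_{i+1}\cdots z_j$. This will require either a boundary principle for $\Li_\bfk$ on the unit torus under the admissibility condition, or an approximation argument in which the $z_j^{(S)}$ are perturbed to nearby generic points and the limit is taken after verifying that both sides extend continuously. One must then confirm that the depth-reduced expression produced by Panzer's formula is a genuine $\Q$-linear combination of level-$2N$ cyclotomic multiple zeta values, with all arguments lying among the $2N$-th roots of unity generated by the $y_j$ and $-1$.
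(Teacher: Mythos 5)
Your proposal is correct and follows essentially the same route as the paper: both rewrite $\ti_{\bfk}(\bfx)$ as $2^{|\bfk|-r}\sqrt{x_1\cdots x_r}\sum_{\bfsi}\si_1\cdots\si_r\,\Li_{\bfk}(\si_1\sqrt{x_1},\ldots,\si_r\sqrt{x_r})$ via the square-root substitution and the odd-index indicator, form the same signed difference, and apply Panzer's parity theorem termwise to each bracket. The domain caveat you raise at the end (Panzer's Theorem~1.3 excludes points with partial products in $[0,+\infty)$, which can occur at roots of unity) is a legitimate subtlety, but the paper's own proof passes over it in exactly the same way, so your argument is no weaker than the published one.
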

\begin{proof}
According to the definition, it is not difficult to observe that cyclotomic multiple $t$-values can be expressed as $\Z$-coefficient linear combinations of cyclotomic multiple zeta values as follows:
\begin{align*}
\ti_{k_1,\ldots,k_r}(x_1,\ldots,x_r)=2^{k_1+\cdots+k_r-r} \sqrt{x_1\cdots x_r}\sum_{\si_1,\ldots,\si_r\in\{\pm 1\}}\si_1 \cdots \si_r \Li_{k_1,\ldots,k_r}(\si_1\sqrt{x_1},\ldots,\si_r\sqrt{x_r}).
\end{align*}
Hence, we obtain
\begin{align*}
&\ti_{k_1,\ldots,k_r}(x_1,\ldots,x_r)-(-1)^{k_1+\cdots+k_r+r}(x_1\cdots x_r)\ti_{k_1,\ldots,k_r}\Big(x_1^{-1},\ldots,x_r^{-1}\Big)\\
&=2^{k_1+\cdots+k_r-r} \sqrt{x_1\cdots x_r}\sum_{\si_1,\ldots,\si_r\in\{\pm 1\}}\si_1 \cdots \si_r \\&\qquad\qquad\quad\times \left(\Li_{k_1,\ldots,k_r}(\si_1\sqrt{x_1},\ldots,\si_r\sqrt{x_r})-(-1)^{k_1+\cdots+k_r+r}\Li_{k_1,\ldots,k_r}\left(\frac1{\si_1\sqrt{x_1}},\ldots,\frac1{\si_r\sqrt{x_r}}\right)\right).
\end{align*}
Then, by further utilizing Panzer's parity theorem regarding cyclotomic multiple polylogarithms, the theorem can be proven.
\end{proof}

\begin{re}
It appears that Panzer's parity theorem cannot determine whether the cyclotomic multiple $t$-values with depth $<r$ in Theorem \ref{thm-weakendedofCMtVparity} will appear, nor can it determine whether the depth can be $\leq N$.
\end{re}

\begin{qu}
Within the ``mod products" of Conjecture \ref{conjparitycmtv}, is it possible that only cyclotomic multiple $t$-values with depth $<r$ and level $\leq N$ appear? That is, cyclotomic multiple zeta values do not appear.
\end{qu}

\begin{qu}
Similar to multiple polylogarithms, can the multiple $t$-polylogarithm function $\ti_{\bfk}(\bfx)$ be analytically continued to the complex plane, yielding a generalization analogous to Panzer's parity theorem for multiple polylogarithms applied to the analytically continued multiple $t$-polylogarithm function?
\end{qu}

The structure of this paper is as follows: In Section 2, we present a residue lemma and the form of the contour integrals to be considered in this paper, as well as the Laurent or Taylor expansions of the integrand in the contour integrals at integer or half-integer points. In Sections 3 and 4, by examining specific contour integrals and computing residues, we derive explicit formulas for the parity of cyclotomic linear and quadratic Euler $T$-sums, thereby establishing parity results for cyclotomic double and triple $t$-values. In Section 5, we utilize the methods of contour integration and residue computation to provide a general theorem on the parity of cyclotomic Euler $T$-sums and present a parity formula for a family cyclotomic cubic Euler $T$-sum. In Section 6, we compute two general formulas for linear Euler $T$-sums involving rational functions. By evaluating specific rational functions, numerous other types of linear Euler $T$-sum results can be obtained. Additionally, we present some parity results for cyclotomic multiple $T$-values and propose several questions and conjectures.

\section{Preliminaries}

Flajolet and Salvy \cite{Flajolet-Salvy} defined a kernel function $\xi(s)$ with two requirements: 1). $\xi(s)$ is meromorphic in the whole complex plane. 2). $\xi(s)$ satisfies $\xi(s)=o(s)$ over an infinite collection of circles $\left| s \right| = {\rho _k}$ with ${\rho _k} \to \infty $. Applying these two conditions of kernel
function $\xi(s)$, Flajolet and Salvy discovered the following residue lemma.

\begin{lem}\emph{(cf.\ \cite{Flajolet-Salvy})}\label{lem-redisue-thm}
Let $\xi(s)$ be a kernel function and let $r(s)$ be a rational function which is $O(s^{-2})$ at infinity. Then
\begin{align}\label{residue-}
\sum\limits_{\alpha  \in O} {{\mathop{\rm Res}}{{\left( {r(s)\xi(s)},\alpha  \right)}}}  + \sum\limits_{\beta  \in S}  {{\mathop{\rm Res}}{{\left( {r(s)\xi(s)}, \beta  \right)}}}  = 0,
\end{align}
where $S$ is the set of poles of $r(s)$ and $O$ is the set of poles of $\xi(s)$ that are not poles $r(s)$. Here ${\mathop{\rm Re}\nolimits} s{\left( {r(s)},\alpha \right)} $ denotes the residue of $r(s)$ at $s= \alpha.$
\end{lem}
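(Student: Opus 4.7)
The plan is to apply Cauchy's residue theorem on the sequence of circles $C_k:\,|s|=\rho_k$ supplied by the definition of a kernel function, and then to show that the boundary contribution vanishes in the limit $k\to\infty$, leaving only the sum of residues. First I would note that $r(s)\xi(s)$, being the product of a rational function with a meromorphic function, is itself meromorphic on $\CC$, and its pole set decomposes disjointly as $S \cup O$: the finite set $S$ of poles coming from $r(s)$, and the set $O$ of poles of $\xi(s)$ not cancelled by zeros of $r(s)$. I may slightly perturb each $\rho_k$ if necessary so that no pole of $r(s)\xi(s)$ lies on $C_k$; this does not affect either the hypothesis $\rho_k\to\infty$ or the bound $\xi(s)=o(s)$ on $C_k$. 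The residue theorem then gives
\[
\frac{1}{2\pi i}\oint_{C_k} r(s)\xi(s)\,ds \;=\; \sum_{\gamma \,\in\, (S\cup O)\cap\{|\gamma|<\rho_k\}} {\rm Res}\bigl(r(s)\xi(s),\gamma\bigr).
\]

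The heart of the argument is the estimate on $C_k$. Since $r(s)=O(s^{-2})$ at infinity and $\xi(s)=o(s)$ on $C_k$, the integrand satisfies $|r(s)\xi(s)| = o(|s|^{-1})$ uniformly on $C_k$, so by the standard $M$-$L$ bound the contour integral is at most $2\pi\rho_k\cdot o(\rho_k^{-1}) = o(1)$ as $k\to\infty$. Letting $k\to\infty$, the left-hand side vanishes while the right-hand side eventually contains every pole of $r(s)\xi(s)$, since $S$ is finite and each $\alpha\in O$ lies inside $C_k$ for all sufficiently large $k$. This yields $\sum_{\alpha\in O}{\rm Res}(r(s)\xi(s),\alpha) + \sum_{\beta\in S}{\rm Res}(r(s)\xi(s),\beta) = 0$, which is exactly \eqref{residue-}.

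The only subtle point, and what I expect to be the main obstacle, is justifying that the infinite series $\sum_{\alpha\in O}{\rm Res}(r(s)\xi(s),\alpha)$ actually converges. This is not a separate hypothesis: rather, it is a byproduct of the argument. Because the contour integral on $C_k$ tends to $0$ and the finite sum over $\beta\in S$ is constant once $\rho_k$ exceeds $\max_{\beta\in S}|\beta|$, the partial sums $\sum_{\alpha\in O,\,|\alpha|<\rho_k} {\rm Res}(r(s)\xi(s),\alpha)$ converge to $-\sum_{\beta\in S}{\rm Res}(r(s)\xi(s),\beta)$ along the subsequence $\rho_k$. Convergence along this prescribed subsequence is the natural notion in which \eqref{residue-} is to be interpreted, and all subsequent applications of the lemma in the paper will select kernel functions $\xi(s)$ whose pole growth makes this subsequential sum unambiguous.
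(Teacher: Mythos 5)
Your argument is correct and is exactly the standard proof of this lemma as given in Flajolet--Salvy, which the paper simply cites rather than reproves: one integrates $r(s)\xi(s)$ over the circles $|s|=\rho_k$, uses $r(s)=O(s^{-2})$ and $\xi(s)=o(s)$ to bound the integral by $o(1)$, and interprets the sum over $O$ as the limit of partial sums along these circles. Your closing remark about the subsequential interpretation of the infinite sum is the right reading, and it is also consistent with the paper's follow-up observation that the weaker hypothesis $r(s)\xi(s)=o(s^{-1})$ suffices.
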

Notably, Lemma \ref{lem-redisue-thm} also holds under the weaker condition $r(s)\xi(s)=o(s^{-1})$.

In \cite{Rui-Xu2025}, Rui and Xu defined the \emph{extended trigonometric function} $\phi(s;x)$ and \emph{generalized digamma function} $\Phi(s;x)$ as follows:
\begin{align}
\phi(s;x):=\sum_{k=0}^\infty \frac{x^k}{k+s}\quad (s\notin\N_0^-:=\{0,-1,-2,-3,\ldots\}),
\end{align}
where $x$ is an arbitrary complex number with $|x|\leq 1$ and $x\neq 1$, and
\[\Phi(s;x):=\phi(s;x)-\phi\Big(-s;x^{-1}\Big)-\frac1{s},\]
where to ensure the convergence of the series above, $x$ can only be any root of unity. Clearly, $\phi(s;x)=o(1)$ and $\Phi(s;x)=o(1)$ if $|s|\rightarrow \infty$. In fact, this $\phi(s;x)$ function is a special case of the classical Lerch Zeta Function, and in a recent paper \cite{VH2025}, Vicente and Holgado have studied the Lerch-type zeta function of a recurrence sequence of arbitrary degree.

Rui and Xu provided the Laurent expansions or Maclaurin expansions of functions $\phi(s;x)$ and $\Phi(s;x)$ at integer points.
\begin{lem}(\cite{Rui-Xu2025})\label{lem-rui-xu-one} For $p\in \N$, if $|s+n|<1\ (n\in\N_0:=\N\cup\{0\})$, then
\begin{align}\label{Lexp-phi--n-diffp-1}
\frac{\phi^{(p-1)}(s;x)}{(p-1)!} (-1)^{p-1}&=x^n\sum_{k=0}^\infty \binom{k+p-1}{p-1} \left((-1)^k \Li_{k+p}(x)+(-1)^p\zeta_n\Big(k+p;x^{-1}\Big)\right)(s+n)^k \nonumber\\&\quad+\frac{x^n}{(s+n)^p}\qquad (|s+n|<1,\ n\geq 0)
\end{align}
and
\begin{align}\label{Lexp-phi-n-diffp-1}
\frac{\phi^{(p-1)}(s;x)}{(p-1)!} (-1)^{p-1}&=x^{-n}\sum_{k=0}^\infty \binom{k+p-1}{p-1} (-1)^k  \left( \Li_{k+p}(x)-\zeta_{n-1}\Big(k+p;x\Big)\right)(s-n)^k \nonumber\\&\qquad\qquad\qquad\qquad (|s-n|<1,\ n\geq 1).
\end{align}
\end{lem}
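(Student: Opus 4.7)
The plan is to work directly from the defining series $\phi(s;x)=\sum_{k=0}^\infty \frac{x^k}{k+s}$ and its $(p-1)$-st derivative
\begin{align*}
\frac{\phi^{(p-1)}(s;x)}{(p-1)!}(-1)^{p-1}=\sum_{k=0}^\infty \frac{x^k}{(k+s)^p},
\end{align*}
and then to expand each summand around the point of interest and reassemble. Since the right-hand side is a termwise meromorphic series whose only singularities are simple (as a function of $s$, at $s=0,-1,-2,\dots$), both expansions will follow by separating the unique polar summand (if present) and Taylor-expanding the remainder with the binomial theorem.

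For the first formula (expansion around $s=-n$ with $n\geq 0$), I would set $u=s+n$, isolate the singular summand corresponding to $k=n$, which contributes exactly $\frac{x^n}{u^p}$, and split the rest of the sum into the tail $k=n+j$ with $j\geq 1$ and the head $k=n-j$ with $1\leq j\leq n$. For the tail one expands
\begin{align*}
\frac{1}{(j+u)^p}=\sum_{m\geq 0}(-1)^m\binom{m+p-1}{p-1}\frac{u^m}{j^{m+p}},
\end{align*}
and for the head one writes $\frac{1}{(-j+u)^p}=\frac{(-1)^p}{(j-u)^p}=(-1)^p\sum_{m\geq 0}\binom{m+p-1}{p-1}\frac{u^m}{j^{m+p}}$. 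After factoring out $x^n$, summing over $j\geq 1$ produces $(-1)^m\Li_{m+p}(x)$ from the tail, and summing over $1\leq j\leq n$ produces $(-1)^p\zeta_n(m+p;x^{-1})$ from the head, which is precisely the claimed combination of Laurent coefficients together with the stated polar term.

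The second formula (expansion around $s=n$ for $n\geq 1$) is simpler because $\phi^{(p-1)}(s;x)$ is already holomorphic there. Putting $u=s-n$, every summand contributes to the Taylor part: for each $k\geq 0$ I write $k+s=(k+n)+u$ with $j:=k+n\geq n\geq 1$ and expand as above. Interchanging the finite-radius Taylor sum in $m$ with the $k$-sum, the inner series reindexes as
\begin{align*}
\sum_{k=0}^\infty \frac{x^k}{(k+n)^{m+p}}=x^{-n}\sum_{j=n}^\infty \frac{x^j}{j^{m+p}}=x^{-n}\bigl(\Li_{m+p}(x)-\zeta_{n-1}(m+p;x)\bigr),
\end{align*}
which matches the stated coefficient exactly.

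The only delicate point is justifying the swap of the summations in $k$ and $m$. The hard part will be making the rearrangement rigorous on the full disk $|u|<1$, since naive absolute bounds degrade as $u\to 1$. I would proceed by first proving the identities on a smaller disk $|u|\leq \rho<1$, where the double series is dominated by $C_\rho\sum_{j}j^{-p}$ and Fubini applies, and then invoking the identity theorem for analytic functions to extend the equality to the full domain of convergence $|u|<1$; the finite-sum head for $n\geq 1$ presents no issue. Once this interchange is legitimate, identifying the coefficients as the stated combinations of $\Li_{m+p}$ and $\zeta_n$ values is immediate from their definitions.
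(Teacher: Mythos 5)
Your computation is correct and is essentially the paper's own route: the paper does not prove this lemma at all (it is quoted from \cite{Rui-Xu2025}), but its proof of the analogous Lemma \ref{lem-extend-rui-xu-one} is exactly this elementary argument, merely in the opposite order (expand $\phi$ itself at the point first, then differentiate $p-1$ times termwise, rather than differentiating first and expanding each summand by the binomial series); the two are equivalent and your isolation of the $k=n$ polar term and the head/tail split reproduce the stated coefficients exactly. One small inaccuracy in your justification: the dominating bound $C_\rho\sum_j j^{-p}$ fails for $p=1$ when $|x|=1$ (the relevant case here, since $x$ is a root of unity and $\zeta_n(\cdot;x^{-1})$ presupposes $|x|=1$), because $\sum_j (j-\rho)^{-1}$ diverges; for that case one should first write $\frac{1}{j+u}=\frac{1}{j}-\frac{u}{j(j+u)}$ so that the conditionally convergent piece $\Li_1(x)$ is split off and Fubini applies to the absolutely convergent remainder, after which your identity-theorem extension to $|u|<1$ goes through unchanged.
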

\begin{lem}(\cite{Rui-Xu2025})\label{lem-rui-xu-two} For $n\in \Z$,
\begin{align}\label{LEPhi-function}
\Phi(s;x)=x^{-n} \left(\frac1{s-n}+\sum_{m=0}^\infty \Big((-1)^m\Li_{m+1}(x)-\Li_{m+1}\Big(x^{-1}\Big)\Big)(s-n)^m \right).
\end{align}
\end{lem}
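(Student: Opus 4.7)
The plan is to derive the claimed Laurent expansion of $\Phi(s;x)$ at $s=n$ directly from its definition
\[
\Phi(s;x)=\phi(s;x)-\phi(-s;x^{-1})-\frac{1}{s}
\]
by substituting in the ($p=1$) Laurent/Taylor expansions of $\phi$ supplied by Lemma \ref{lem-rui-xu-one}, and then collecting coefficients. Since $\phi(\cdot\,;y)$ is meromorphic with simple poles exactly at the non-positive integers, I split into three cases according to which of the two summands contributes a pole at $s=n$: (i) $n>0$, where only $\phi(-s;x^{-1})$ is singular at $s=n$; (ii) $n<0$, where only $\phi(s;x)$ is singular; and (iii) $n=0$, where both terms as well as $-1/s$ are singular and cancel as a group.

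For case (i), I apply \eqref{Lexp-phi-n-diffp-1} with $p=1$ to expand $\phi(s;x)$ at $s=n\ge 1$, and \eqref{Lexp-phi--n-diffp-1} with $p=1$, $x\mapsto x^{-1}$ to expand $\phi(t;x^{-1})$ at $t=-n$; the substitution $t=-s$ flips $(t+n)^k=(-(s-n))^k$ and produces an extra $(-1)^k$, converting the two kinds of $\Li$-terms into matching shapes and turning the pole $x^{-n}/(t+n)$ into $-x^{-n}/(s-n)$. Writing $-1/s$ as a geometric series in $(s-n)/n$, the coefficient of $(s-n)^k$ in $\Phi(s;x)$ then becomes
\[
x^{-n}\bigl[(-1)^k\Li_{k+1}(x)-\Li_{k+1}(x^{-1})\bigr]\;+\;(-1)^k x^{-n}\bigl[\zeta_n(k+1;x)-\zeta_{n-1}(k+1;x)\bigr]\;-\;\frac{(-1)^k}{n^{k+1}},
\]
and the key observation that $\zeta_n(k+1;x)-\zeta_{n-1}(k+1;x)=x^n/n^{k+1}$ makes the last two pieces cancel exactly, leaving the claimed coefficient. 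The residue at $s=n$ is $+x^{-n}$, coming from the sign flip on the pole of $\phi(-s;x^{-1})$.

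Case (ii) is symmetric: now I expand $\phi(s;x)$ via \eqref{Lexp-phi--n-diffp-1} at $s=n=-m$ with $m\ge 1$ (this is where the pole lives) and $\phi(-s;x^{-1})$ via \eqref{Lexp-phi-n-diffp-1} at $t=-s=m$. The same telescoping identity $\zeta_m-\zeta_{m-1}=x^{-m}/m^{k+1}$, combined with $(-n)^{k+1}=(-1)^{k+1}n^{k+1}$, kills the Taylor expansion of $-1/s$ at $s=n$. Case (iii) is the cleanest: with $n=0$, the $\zeta_0$-terms in \eqref{Lexp-phi--n-diffp-1} vanish, and the three simple poles combine as $1/s-(-1/s)-1/s=1/s$, giving exactly the residue $x^0=1$ demanded by the formula.

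The only real obstacle is bookkeeping of signs: the factor $(-1)^k$ from $(-(s-n))^k$ under $t\mapsto -s$ must be distributed carefully so that $(-1)^k\Li_{k+1}(x^{-1})$ becomes $\Li_{k+1}(x^{-1})$ in the final answer (and vice versa), and one must check that the expansion of $-1/s$ matches the telescoping remainder in the right sign across both case (i) and case (ii). Once these signs are tracked, the three cases unify into the single formula \eqref{LEPhi-function} valid for every $n\in\Z$.
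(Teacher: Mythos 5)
The paper does not prove Lemma \ref{lem-rui-xu-two} at all: it is imported verbatim from \cite{Rui-Xu2025}, so there is no internal proof to compare against, and your blind argument must be judged on its own. It is correct. I checked the decisive points: in your case (i), expanding $\phi(s;x)$ by \eqref{Lexp-phi-n-diffp-1} with $p=1$, expanding $\phi(-s;x^{-1})$ by \eqref{Lexp-phi--n-diffp-1} with $p=1$ and $x\mapsto x^{-1}$ under $t=-s$ (which indeed flips the pole to $-x^{-n}/(s-n)$, hence residue $+x^{-n}$ after the overall minus sign), and expanding $-1/s$ at $s=n$, the coefficient of $(s-n)^m$ is exactly the expression you wrote, and the telescoping $\zeta_n(m+1;x)-\zeta_{n-1}(m+1;x)=x^n/n^{m+1}$ cancels the $-(-1)^m/n^{m+1}$ term, leaving $x^{-n}\big((-1)^m\Li_{m+1}(x)-\Li_{m+1}(x^{-1})\big)$; case (ii) works symmetrically with $\zeta_{m_0}(k+1;x^{-1})-\zeta_{m_0-1}(k+1;x^{-1})=x^{-m_0}/m_0^{k+1}$, and case (iii) is the one-line computation you describe. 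Your method is the same elementary direct-expansion style the paper itself uses to prove its companion Lemmas \ref{lem-extend-rui-xu-one} and \ref{lem-extend-rui-xu-two}. One streamlining worth knowing: the three cases can be unified, since
\begin{align*}
\phi(s+1;x)=x^{-1}\Big(\phi(s;x)-\frac1s\Big),\qquad \phi(-s-1;x^{-1})=x^{-1}\phi\Big(-s;x^{-1}\Big)-\frac{1}{s+1},
\end{align*}
give the quasi-periodicity $\Phi(s+1;x)=x^{-1}\Phi(s;x)$, hence $\Phi(s;x)=x^{-n}\Phi(s-n;x)$ for all $n\in\Z$; this reduces the whole lemma to your case (iii) at $n=0$ and eliminates the sign bookkeeping that you rightly identify as the only delicate part of the case-by-case route.
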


Here, we present the Taylor series expansions of function $\phi(s+1/2;x)$ at integer points and function $\Phi(s;x)$ at half-integer points.

\begin{lem}\label{lem-extend-rui-xu-one} For $p\in \N$, if $|s+n|<1\ (n\geq 0)$, then
\begin{align}\label{Lexp-phi--n-diffp-1}
&\frac{\phi^{(p-1)}(s+1/2;x)}{(p-1)!} (-1)^{p-1}\nonumber\\
&=x^n\sum_{k=0}^\infty \binom{k+p-1}{p-1} \left((-1)^k \ti_{k+p}(x)x^{-1}+(-1)^p t_n\Big(k+p;x^{-1}\Big)\right)(s+n)^k
\end{align}
and if $|s-n|<1\ (n\geq 1)$
\begin{align}\label{Lexp-phi-n-diffp-1}
&\frac{\phi^{(p-1)}(s+1/2;x)}{(p-1)!} (-1)^{p-1}\nonumber\\
&=x^{-n-1}\sum_{k=0}^\infty \binom{k+p-1}{p-1} (-1)^k  \left( \ti_{k+p}(x)-t_{n}\Big(k+p;x\Big)\right)(s-n)^k.
\end{align}
\end{lem}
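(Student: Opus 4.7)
The plan is to differentiate the defining series for $\phi(s+1/2;x)$ term-by-term and then split the resulting sum according to whether the index produces a singular term near the expansion point. Starting from
\[
\phi(s+1/2;x)=\sum_{k=0}^\infty \frac{x^k}{k+s+1/2},
\]
differentiating $p-1$ times yields
\[
\frac{\phi^{(p-1)}(s+1/2;x)}{(p-1)!}(-1)^{p-1}=\sum_{k=0}^\infty \frac{x^k}{(k+s+1/2)^p},
\]
and the task is to expand this in powers of $s+n$ and $s-n$ respectively.

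For the expansion around $s=-n$ with $n\geq 0$, I would substitute $s=-n+u$ with $|u|<1$, then reindex via $j=k-n$ to rewrite the series as
\[
x^n\sum_{j=0}^\infty \frac{x^j}{(j+1/2+u)^p}+x^n\sum_{j=-n}^{-1}\frac{x^j}{(j+1/2+u)^p}.
\]
The first piece is regular, and expanding each factor by the binomial series
\[
\frac{1}{(j+1/2+u)^p}=\sum_{k=0}^\infty \binom{k+p-1}{p-1}(-1)^k\frac{u^k}{(j+1/2)^{k+p}},
\]
then summing over $j\geq 0$ and shifting $m=j+1$ produces a factor of $x^{-1}\ti_{k+p}(x)$. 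For the finite piece, writing $j=-i$ gives $(1/2-i+u)^p=(-1)^p(i-1/2-u)^p$, and the binomial expansion in $u$ now recognises $\sum_{i=1}^n x^{-i}/(i-1/2)^{k+p}=t_n(k+p;x^{-1})$. Collecting everything in powers of $u=s+n$ yields the first formula with the $(-1)^p$ coming precisely from the sign above.

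For the expansion around $s=n$ with $n\geq 1$, I would set $s=n+u$, reindex via $j=k+n$, and split the series as the regular part minus the part $j=0,\ldots,n-1$:
\[
x^{-n}\sum_{j=0}^\infty \frac{x^j}{(j+1/2+u)^p}-x^{-n}\sum_{j=0}^{n-1}\frac{x^j}{(j+1/2+u)^p}.
\]
Both pieces are now regular at $u=0$, so applying the same binomial expansion and the substitution $m=j+1$ in the finite sum turns it into $x^{-1}t_n(k+p;x)$, while the infinite part again gives $x^{-1}\ti_{k+p}(x)$. Combining these yields the second formula.

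The computation is essentially mechanical, so no single step presents a serious obstacle; the only place requiring care is the sign bookkeeping in the finite sum of the first expansion, where the binomial coefficient $(-1)^k$ (coming from $+u$) must be absent because the shifted base is $(i-1/2-u)$, and the extra $(-1)^p$ must be correctly attributed to the overall sign of the singular terms. Once the two expansions are obtained, the final formulas stated in Lemma \ref{lem-extend-rui-xu-one} follow immediately upon setting $u=s+n$ and $u=s-n$.
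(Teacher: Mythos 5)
Your proposal is correct and amounts to the same elementary computation as the paper's proof: the paper first writes down the Taylor expansion of $\phi(s+1/2;x)$ itself (the $p=1$ case) via the same reindexing of the defining series and then differentiates $p-1$ times, whereas you differentiate the defining series first and then expand each term binomially — an inessential swap of the order of operations. Your sign bookkeeping in the finite sums (the absence of $(-1)^k$ and the appearance of $(-1)^p$ from the base $i-1/2-u$) checks out against both displayed formulas.
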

\begin{proof}
If $|s+n|<1\ (n\geq 0)$, it follows directly from the definition that
\begin{align*}
\phi(s+1/2;x)=x^n \sum_{m=0}^\infty \left((-1)^m \ti_{m+1}(x)x^{-1}-t_n\Big(m+1;x^{-1}\Big) \right)(s+n)^m.
\end{align*}
Taking the $(p-1)$th derivative with respect to $s$ on both sides of the above equation yields formula \eqref{Lexp-phi--n-diffp-1}. Similarly, if $|s-n|<1\ (n\geq 1)$, by a direct calculation, we obtain
\begin{align*}
\phi(s+1/2;x)=x^{-n-1} \sum_{m=0}^\infty (-1)^m \left( \ti_{m+1}(x)-t_n\Big(m+1;x\Big) \right)(s-n)^m.
\end{align*}
Taking the $(p-1)$th derivative with respect to $s$ on both sides of the above equation yields formula \eqref{Lexp-phi-n-diffp-1}.
\end{proof}

\begin{lem}\label{lem-extend-rui-xu-two} If $|s+n+1/2|<1\ (n\geq 0)$, then
\begin{align}
\Phi(s;x)=x^n \sum_{m=0}^\infty \left((-1)^m \ti_{m+1}(x)-x \ti_{m+1}\Big(x^{-1}\Big) \right)(s+n+1/2)^m.
\end{align}
\end{lem}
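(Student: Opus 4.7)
The point $s_0 := -(n+\tfrac12)$ is a half-integer, so by Lemma \ref{lem-rui-xu-two} (whose poles sit exactly at the integers) the function $\Phi(s;x)$ is holomorphic at $s_0$. My strategy is simply to expand each of the three summands in $\Phi(s;x)=\phi(s;x)-\phi(-s;x^{-1})-1/s$ as a Taylor series around $s_0$ and verify that the ``extra'' pieces cancel, leaving the announced formula.

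For $\phi(s;x)=\sum_{k\ge 0} x^k/(k+s)$, I would write $k+s=(k-n-\tfrac12)+(s-s_0)$, expand each term geometrically, interchange summations, and split the resulting $k$-sum into the two ranges $0\le k\le n$ and $k\ge n+1$. After the substitutions $j=n-k$ (first range) and $j=k-n-1$ (second range) the denominators become $\pm(j+\tfrac12)^{m+1}$, and the two partial sums are recognized respectively as $(-1)^{m+1}x^{n+1}t_{n+1}(m+1;x^{-1})$ and $x^n\ti_{m+1}(x)$. Thus
\[
\phi(s;x)=\sum_{m\ge 0}(s-s_0)^m\Bigl[(-1)^m x^n\ti_{m+1}(x)-x^{n+1}t_{n+1}(m+1;x^{-1})\Bigr].
\]
A completely analogous but simpler calculation for $\phi(-s;x^{-1})$, now expanding around $-s_0=n+\tfrac12$ and reindexing by $l=k+n+1$ so that $k+n+\tfrac12=l-\tfrac12$, produces
\[
\phi(-s;x^{-1})=x^{n+1}\sum_{m\ge 0}(s-s_0)^m\bigl[\ti_{m+1}(x^{-1})-t_n(m+1;x^{-1})\bigr].
\]
Finally $1/s$ expands trivially to $-\sum_{m\ge 0}(s-s_0)^m/(n+\tfrac12)^{m+1}$.

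The crux is the cancellation. Subtracting the three expansions, the coefficient of $(s-s_0)^m$ contains the combination $-x^{n+1}\bigl[t_{n+1}(m+1;x^{-1})-t_n(m+1;x^{-1})\bigr]+1/(n+\tfrac12)^{m+1}$, and the telescoping identity $t_{n+1}(p;x^{-1})-t_n(p;x^{-1})=x^{-(n+1)}/(n+\tfrac12)^p$ makes this vanish identically. What survives is precisely $x^n\bigl[(-1)^m\ti_{m+1}(x)-x\,\ti_{m+1}(x^{-1})\bigr]$, which is the claimed formula.

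The only genuinely non-routine step is the careful bookkeeping in the split-and-reindex for $\phi(s;x)$, so that the leftover $t$-values assemble into the telescoping difference $t_{n+1}-t_n$ whose natural partner is the Taylor tail of $-1/s$; once this alignment is spotted, the identity is forced. Convergence of the final series in the stated disc is clear, since each of the three expansions converges in a neighborhood of $s_0$ (the nearest singularity of the summands is at distance at least $\tfrac12$, while the combined function $\Phi(s;x)$ is holomorphic on a strictly larger disc thanks to the cancellation above).
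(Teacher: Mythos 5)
The paper offers no proof of this lemma at all (it is explicitly ``left to interested readers''), so there is no argument to compare against; your computation supplies exactly the elementary calculation the authors had in mind, and it checks out. Expanding $\phi(s;x)$, $\phi(-s;x^{-1})$ and $1/s$ around $s_0=-(n+\tfrac12)$, splitting the $k$-sum at $k=n$, and recognizing the four pieces as $t_{n+1}(m+1;x^{-1})$, $\ti_{m+1}(x)$, $\ti_{m+1}(x^{-1})$ and $t_n(m+1;x^{-1})$ is correct, and the telescoping identity $t_{n+1}(p;x^{-1})-t_n(p;x^{-1})=x^{-(n+1)}(n+\tfrac12)^{-p}$ does cancel the Taylor tail of $-1/s$, leaving precisely the coefficient $x^n\bigl[(-1)^m\ti_{m+1}(x)-x\,\ti_{m+1}(x^{-1})\bigr]$. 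One remark: your closing claim that the cancellation makes $\Phi(s;x)$ ``holomorphic on a strictly larger disc'' is not right. By Lemma \ref{lem-rui-xu-two}, $\Phi(\cdot;x)$ has a simple pole at every integer, in particular at $-n$ and $-(n+1)$, both at distance exactly $\tfrac12$ from $s_0$; the cancellation you exhibit removes the constants $1/(n+\tfrac12)^{m+1}$ from the Taylor coefficients, not the poles of the function. Hence the true radius of convergence is $\tfrac12$, and the bound $|s+n+1/2|<1$ in the statement is itself too generous (a harmless slip the paper also makes in Lemma \ref{lem-extend-rui-xu-one}). Since the lemma is only ever invoked to read off local expansion data in an arbitrarily small neighbourhood of $-(n+\tfrac12)$ for the residue computations, this does not affect anything downstream, but the justification should simply be that each of the three series converges for $|s-s_0|<\tfrac12$.
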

\begin{proof}
The proof of this lemma is also based on an elementary calculation, which we leave to interested readers to attempt.
\end{proof}

Clearly, on the circle with radius $n+1/2\ (n\in \N)$ centred at the origin, the functions $\phi(s;x)$, $\Phi(s;x)$ and their derivatives are all $O(|s|^\varepsilon)\ (\forall \varepsilon>0)$. Consequently, any
polynomial form in $\Phi(s;x)$ and $\phi^{(j)}(s;x)$ is itself a kernel function with poles at a subset of the integers. Therefore, by applying Lemma \ref{lem-redisue-thm}, we conclude that all contour integrals of the following type vanish:
\begin{align*}
\lim_{R\rightarrow \infty}\oint_{C_R} \frac{\Phi(s;x)\phi^{(p_1-1)}(s+1/2;x_1)\cdots\phi^{(p_r-1)}(s+1/2;x_r)}{(p_1-1)!\cdots (p_r-1)!(s+1/2)^q}(-1)^{p_1+\cdots+p_r-r} ds=0,
\end{align*}
where $p_1,\ldots,p_k,q\in \N$ and $C_R$ denote a circular contour with radius $R$. Hereafter, we shall consistently denote this contour integral limit by $\oint\limits_{\left( \infty  \right)} $.

\section{Parity Results of Linear Cyclotomic Euler $T$-Sums}

In this section, we investigate the parity of linear cyclotomic Euler $T$-sums by constructing contour integrals and performing residue calculations, and provide illustrative examples and corollaries. Furthermore, based on the relationship between cyclotomic linear Euler $T$-sums and cyclotomic double $t$-values, we can derive parity results for cyclotomic double $t$-values.

\begin{thm}\label{thm-linearETS} Let $x,y$ be roots of unity, and $p,q\geq 1$ with $(p,y), (q,xy)\neq (1,1)$. We have
\begin{align}
&x T_{p;q}\Big(y;(xy)^{-1}\Big)-(-1)^{p+q}T_{p;q}\Big(y^{-1};xy\Big)\nonumber\\
&=x \ti_p(y)\ti_q\Big((xy)^{-1}\Big)+(-1)^q y^{-1} \ti_p(y)\ti_q(xy)+(-1)^{p+q-1}\ti_{p+q}(x)\nonumber\\
&\quad+(-1)^q \sum_{m=0}^{p-1}\binom{p+q-m-2}{q-1}\left((-1)^m\ti_{m+1}(x)-x\ti_{m+1}\Big(x^{-1}\Big)\right)\Li_{p+q-m-1}(xy)\nonumber\\
&\quad+(-1)^q \sum_{m=0}^{q-1}\binom{p+q-m-2}{p-1}\left((-1)^m x \ti_{m+1}\Big(x^{-1}\Big)-\ti_{m+1}(x)\right)\Li_{p+q-m-1}(y).
\end{align}
\end{thm}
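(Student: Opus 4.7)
The plan is to apply the residue formula of Lemma \ref{lem-redisue-thm} to the contour integral
\begin{align*}
\oint\limits_{(\infty)} \frac{\Phi(s;x)\,\phi^{(p-1)}(s+1/2;y)}{(p-1)!\,(s+1/2)^q}(-1)^{p-1}\,ds = 0,
\end{align*}
whose vanishing along circles of radii $n+1/2$ was established at the end of Section 2. The theorem will fall out by classifying the poles of the integrand and summing all residues. There are three groups: (i) simple poles at every integer $s=n\in\Z$ coming from $\Phi(s;x)$; (ii) poles of order $p$ at each negative half-integer $s=-1/2-k$, $k\geq 1$, coming from $\phi^{(p-1)}(s+1/2;y)$; and (iii) a single pole of order $p+q$ at $s=-1/2$, where the singular part of $\phi^{(p-1)}$ at its $k=0$ branch combines with $(s+1/2)^{-q}$.

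For group (i), I would use Lemma \ref{lem-extend-rui-xu-one}: its second Taylor expansion gives the residues at $s=n\geq 1$, and summing over $n\geq 1$, then shifting the index by $m=n+1$ and rewriting $t_{m-1}(p;y)=t_m(p;y)-y^m/(m-1/2)^p$, yields the three terms $x\,\ti_p(y)\ti_q((xy)^{-1})$, $-x\,T_{p;q}(y;(xy)^{-1})$, $x\,\ti_{p+q}(x^{-1})$, plus a boundary correction $-2^q\ti_p(y)/y$ that is cancelled by the residue at $s=0$. The first Taylor expansion of the same lemma, applied at $s=-n$ for $n\geq 1$, delivers $(-1)^q y^{-1}\ti_p(y)\ti_q(xy)+(-1)^{p+q}T_{p;q}(y^{-1};xy)$. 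Thus both $T$-sums on the left of the theorem, together with the two $\ti\cdot\ti$ product terms on the right, come directly from the integer residues.

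For group (ii), at $s=-1/2-k$ Lemma \ref{lem-rui-xu-one} provides the singular part $y^k/(s+1/2+k)^p$ of $\phi^{(p-1)}(s+1/2;y)(-1)^{p-1}/(p-1)!$, while Lemma \ref{lem-extend-rui-xu-two} gives the Taylor expansion of $\Phi(s;x)$ at $s=-k-1/2$, and $(s+1/2)^{-q}=(-k+(s+1/2+k))^{-q}$ is expanded by the binomial series. Extracting the coefficient of $(s+1/2+k)^{p-1}$ produces a residue proportional to $(xy)^k k^{-(p+q-m-1)}$, and summation over $k\geq 1$ converts the $k$-sum to $\Li_{p+q-m-1}(xy)$, giving exactly the first polylogarithm sum on the right-hand side.

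The main obstacle will be group (iii), the order-$(p+q)$ pole at $s=-1/2$. Here one uses Lemma \ref{lem-extend-rui-xu-two} with $n=0$ to expand $\Phi(s;x)=\sum_{m\geq 0}\bigl((-1)^m\ti_{m+1}(x)-x\ti_{m+1}(x^{-1})\bigr)(s+1/2)^m$ and writes $\phi^{(p-1)}(s+1/2;y)(-1)^{p-1}/(p-1)!=(s+1/2)^{-p}+\sum_{j\geq 0}\binom{j+p-1}{p-1}(-1)^j\Li_{j+p}(y)(s+1/2)^j$, then multiplies by $(s+1/2)^{-q}$ and reads off the coefficient of $(s+1/2)^{-1}$. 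The singular-times-singular piece yields $(-1)^{p+q-1}\ti_{p+q}(x)-x\ti_{p+q}(x^{-1})$, while the singular-times-regular convolution produces the second polylogarithm sum of the theorem. The crucial verification is that the $-x\ti_{p+q}(x^{-1})$ from $s=-1/2$ cancels precisely against the $+x\ti_{p+q}(x^{-1})$ coming from the index shift in group (i), so that the net residue at $s=-1/2$ contributes only $(-1)^{p+q-1}\ti_{p+q}(x)$ beyond the polylogarithm sum. Once this cancellation (and the analogous cancellation of the $2^q\ti_p(y)/y$ terms) is checked, the vanishing of the total residue sum rearranges into the stated identity.
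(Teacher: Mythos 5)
Your proposal is correct and follows essentially the same route as the paper's proof: the identical contour integral, the same classification of poles (integers, $s=-1/2$, and $s=-n-1/2$ for $n\geq 1$), the same use of Lemmas \ref{lem-rui-xu-one}--\ref{lem-extend-rui-xu-two} to extract the residues, and the same assembly via Lemma \ref{lem-redisue-thm}. The only difference is bookkeeping — you split off the $n=0$ residue and track the cancellations of $2^q\ti_p(y)/y$ and $x\ti_{p+q}(x^{-1})$ explicitly, which checks out against the paper's consolidated formulas.
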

\begin{proof}
The proof of this theorem is based on residue calculations of the following contour integral:
\begin{align*}
\oint\limits_{\left( \infty  \right)} F_{p,q}(x,y;s)ds:= \oint\limits_{\left( \infty  \right)} \frac{\Phi(s;x)\phi^{(p-1)}(s+1/2;y)}{(p-1)!(s+1/2)^q} (-1)^{p-1}ds=0.
\end{align*}
The integrand $F_{p,q}(x,y;s)$ has the following poles throughout the complex plane: 1. All integers (simple poles); 2. $-1/2$ (pole of order $p+q)$ and 3. $-(n+1/2)$ (for positive integer $n$, poles of order $p$). Applying Lemma \ref{lem-rui-xu-one}-\ref{lem-extend-rui-xu-two}, by direct calculations, we deduce the following residues
\begin{align*}
&\Res\left(F_{p,q}(x,y;s),n\right)=\frac{x^{-n}y^{-n-1}}{(n+1/2)^q}\left(\ti_p(y)-t_n(p;y)\right)\quad (n\geq 0),\\
&\Res\left(F_{p,q}(x,y;s),-n\right)=(-1)^q\frac{(xy)^n}{(n-{1/2})^q}\left(\ti_p(y)y^{-1}+(-1)^p t_n\Big(p;y^{-1}\Big)\right)\quad (n\geq 1),\\
&\Res\left(F_{p,q}(x,y;s),-n-1/2\right)=\frac1{(p-1)!} \lim_{s\rightarrow -n-1/2} \frac{d^{p-1}}{ds^{p-1}}\left((s+n+1/2)^pF_{p,q}(x,y;s)\right)\\
&=(-1)^q\sum_{m=0}^{p-1} \binom{p+q-m-2}{q-1} \left((-1)^m \ti_{m+1}(x)-x \ti_{m+1}\Big(x^{-1}\Big) \right) \frac{(xy)^n}{n^{p+q-m-1}}\quad (n\geq 1)
\end{align*}
and
\begin{align*}
&\Res\left(F_{p,q}(x,y;s),-1/2\right)=\frac1{(p+q-1)!} \lim_{s\rightarrow -1/2} \frac{d^{p+q-1}}{ds^{p+q-1}}\left((s+1/2)^{p+q}F_{p,q}(x,y;s)\right)\\
&=(-1)^{p+q-1}\ti_{p+q}(x)-x \ti_{p+q}\Big(x^{-1}\Big)\\
&\quad+\sum_{m+k=q-1,\atop m,k\geq 0} (-1)^k\binom{k+p-1}{p-1}\Li_{k+p}(y)\left((-1)^m \ti_{m+1}(x)-x\ti_{m+1}\Big(x^{-1}\Big) \right).
\end{align*}
From Lemma \ref{lem-redisue-thm}, we know that
\begin{align*}
&\sum_{n=0}^\infty \Res\left(F_{p,q}(x,y;s),n\right)+\sum_{n=1}^\infty \Res\left(F_{p,q}(x,y;s),-n\right) \\
&\quad+\sum_{n=1}^\infty \Res\left(F_{p,q}(x,y;s),-n-1/2\right) +\Res\left(F_{p,q}(x,y;s),-1/2\right)=0.
\end{align*}
Finally, combining these four contributions yields the statement of Theorem \ref{thm-linearETS}.
\end{proof}

\begin{exa}
Setting $(p,q)=(1,2)$ in Theorem \ref{thm-linearETS} yields
\begin{align*}
&xT_{1;2}\Big(y;(xy)^{-1}\Big)+T_{1;2}\Big(y^{-1};xy\Big)\\&=x\ti_1(y)\ti_2\Big((xy)^{-1}\Big)+y^{-1}\ti_1(y)\ti_2(xy)+\ti_3(x)+\Big(\ti_1(x)-x\ti_1\Big(x^{-1}\Big)\Big)\Li_2(xy)\\&\quad+\Big(x\ti_1\Big(x^{-1}\Big)-\ti_1(x)\Big)\Li_2(y)-\Big(x\ti_2\Big(x^{-1}\Big)+\ti_2(x)\Big)\Li_1(y).
\end{align*}
Setting $(p,q)=(1,3)$ in Theorem \ref{thm-linearETS} yields
\begin{align*}
&xT_{1;3}\Big(y;(xy)^{-1}\Big)-T_{1;3}\Big(y^{-1};xy\Big)\\&=x\ti_1(y)\ti_3\Big((xy)^{-1}\Big)-y^{-1}\ti_1(y)\ti_3(xy)-\ti_4(x)-\Big(\ti_1(x)-x\ti_1\Big(x^{-1}\Big)\Big)\Li_3(xy)\\&\quad-
\Big(x\ti_1\Big(x^{-1}\Big)-\ti_1(x)\Big)\Li_3(y)+\Big(x\ti_2\Big(x^{-1}\Big)+\ti_2(x)\Big)\Li_2(y)\\&\quad-\Big(x\ti_3\Big(x^{-1}\Big)-\ti_3(x)\Big)\Li_1(y).
\end{align*}
Setting $(p,q)=(2,2)$ in Theorem \ref{thm-linearETS} yields
\begin{align*}
&xT_{2;2}\Big(y;(xy)^{-1}\Big)-T_{2;2}\Big(y^{-1};xy\Big)\\&=x\ti_2(y)\ti_2\Big((xy)^{-1}\Big)+y^{-1}\ti_2(y)\ti_2(xy)-\ti_4(x)+2\Big(\ti_1(x)-x\ti_1\Big(x^{-1}\Big)\Big)\Li_3(xy)
\\&\quad-\Big(\ti_2(x)+x\ti_2\Big(x^{-1}\Big)\Big)\Li_2(xy)+2\Big(x\ti_1\Big(x^{-1}\Big)-\ti_1(x)\Big)\Li_3(y)\\&\quad-\Big(x\ti_2\Big(x^{-1}\Big)+\ti_2(x)\Big)\Li_2(y).
\end{align*}
Setting $(p,q)=(3,2)$ in Theorem \ref{thm-linearETS} yields
\begin{align*}
&xT_{3;2}\Big(y;(xy)^{-1}\Big)+T_{3;2}\Big(y^{-1};xy\Big)\\&=x\ti_3(y)\ti_2\Big((xy)^{-1}\Big)+y^{-1}\ti_3(y)\ti_2(xy)+\ti_5(x)+3\Big(\ti_1(x)-x\ti_1\Big(x^{-1}\Big)\Big)\Li_4(xy)\\
&\quad-2\Big(\ti_2(x)+x\ti_2\Big(x^{-1}\Big)\Big)\Li_3(xy)+\Big(\ti_3(x)-x\ti_3\Big(x^{-1}\Big)\Big)\Li_2(xy)\\&\quad+3\Big(x\ti_1\Big(x^{-1}\Big)-\ti_1(x)\Big)\Li_4(y)-\Big(x\ti_2\Big(x^{-1}\Big)+\ti_2(x)\Big)\Li_3(y).
\end{align*}
\end{exa}

Obviously, $t(k_1,k_2,\ldots,k_r)=\ti_{k_1,k_2,\ldots,k_r}(1,1,\ldots,1)$ when $k_r\geq2$. Let $x=y=1$ in Theorem \ref{thm-linearETS}, we have the following corollary.
\begin{cor}
For integers $p,q\geq 2$ with $p+q$ odd, we have
\begin{align*}
2T_{p;q}(1;1)&=t(p)t(q)+(-1)^qt(p)t(q)+t(p+q)\\&\quad-(-1)^q\sum\limits_{k=1}^{\left[p/2\right]}2\binom{p+q-2k-1}{q-1}t(2k)\zeta(p+q-2k)
\\&\quad-(-1)^q\sum\limits_{k=1}^{\left[q/2\right]}2\binom{p+q-2k-1}{p-1}t(2k)\zeta(p+q-2k).
\end{align*}
\end{cor}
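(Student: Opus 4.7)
The plan is to specialize Theorem \ref{thm-linearETS} to $x = y = 1$ and collect terms, using the parity assumption that $p + q$ is odd to kill most of the summands.

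First I would verify that the hypothesis $(p, y), (q, xy) \neq (1, 1)$ of Theorem \ref{thm-linearETS} is automatic since $p, q \geq 2$. With $x = y = 1$, the two $T$-values on the left-hand side coincide, so the left-hand side becomes $\bigl(1 - (-1)^{p+q}\bigr) T_{p;q}(1; 1)$, which equals $2 T_{p;q}(1;1)$ by the parity hypothesis.

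Next I would simplify the right-hand side. The three standalone terms $x \ti_p(y)\ti_q((xy)^{-1}) + (-1)^q y^{-1} \ti_p(y)\ti_q(xy) + (-1)^{p+q-1}\ti_{p+q}(x)$ collapse to $t(p) t(q) + (-1)^q t(p) t(q) + t(p+q)$, using $(-1)^{p+q-1} = 1$. Inside each of the two sums the inner combination $(-1)^m \ti_{m+1}(1) - \ti_{m+1}(1) = \bigl((-1)^m - 1\bigr)\, t(m+1)$ vanishes for even $m$ and equals $-2\, t(m+1)$ for odd $m$. Reindexing the surviving odd-$m$ terms by $m = 2k - 1$ (so that $k$ ranges over $1, \ldots, [p/2]$ in the first sum and $1, \ldots, [q/2]$ in the second), and noting $\Li_{p+q-m-1}(1) = \zeta(p+q-2k)$ (which converges since $p+q-2k \geq q \geq 2$), the two sums take exactly the claimed shape with the prefactor $-2(-1)^q$.

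The only delicate point is the $m = 0$ summand, where the individual quantity $\ti_1(1)$ diverges even though the coefficient $(-1)^0 - 1 = 0$ forces the contribution to vanish. This formal cancellation is the main technical obstacle. I would justify it either by interpreting the identity of Theorem \ref{thm-linearETS} as a limit over roots of unity $x \to 1$ in which the divergent pieces cancel, or more cleanly by rerunning the contour-integration argument directly in the specialized case $x = y = 1$ with a regularized kernel (replacing $\Phi(s; 1)$ by a Flajolet--Salvy style trigonometric kernel such as $\pi\cot(\pi s)$) so that $\ti_1(1)$ never appears and the residue computation matches the claimed identity term by term.
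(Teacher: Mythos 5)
Your proposal is correct and is essentially the paper's own proof: the paper simply says ``Let $x=y=1$ in Theorem \ref{thm-linearETS}'', and the bookkeeping you carry out (the left side collapsing to $\bigl(1-(-1)^{p+q}\bigr)T_{p;q}(1;1)=2T_{p;q}(1;1)$, the even-$m$ terms vanishing, the reindexing $m=2k-1$ with $\Li_{p+q-2k}(1)=\zeta(p+q-2k)$ convergent because $p,q\geq 2$) is exactly what is implicit there. Your handling of the $m=0$ term is in fact more careful than the paper's: note that the one-sided limit of $\ti_1(x)-x\,\ti_1\bigl(x^{-1}\bigr)$ as $x\to 1$ along the unit circle is $\pm i\pi$ rather than $0$, so one should either use your regularized kernel $\pi\cot(\pi s)$ (whose even-order Taylor coefficients at half-integers genuinely vanish, giving $-2t(m+1)$ only for odd $m$), or observe that in the limit the two $m=0$ contributions cancel against each other because $\binom{p+q-2}{q-1}=\binom{p+q-2}{p-1}$ and the second sum in Theorem \ref{thm-linearETS} carries the opposite sign on that combination.
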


\begin{exa}
Since $t(i)=(2^i-1)\zeta(i)$, we have
    \begin{align*}
    &T_{2;3}(1;1)=\frac{1}{2}t(5)+\frac{3}{7}t(2)t(3);\\
    &T_{3;2}(1;1)=\frac{1}{2}t(5)+\frac{4}{7}t(2)t(3);\\
    &T_{3;4}(1;1)=\frac{1}{2}t(7)+\frac{6}{7}t(3)t(4)-\frac{10}{31}t(2)t(5);\\
    &T_{4;3}(1;1)=\frac{1}{2}t(7)+\frac{1}{7}t(3)t(4)+\frac{10}{31}t(2)t(5);\\
    &T_{2;5}(1;1)=\frac{1}{2}t(7)+\frac{5}{31}t(2)t(5)+\frac{2}{7}t(3)t(4);\\
    &T_{5;2}(1;1)=\frac{1}{2}t(7)+\frac{26}{31}t(2)t(5)-\frac{2}{7}t(3)t(4).
    \end{align*}
\end{exa}
Finally, according to definition of cyclotomic linear Euler $T$-sums and cyclotomic double $t$-values, we have
\[T_{p;q}(x;y)=\ti_{p,q}(x,y)+\ti_{p+q}(xy).\]
Therefore, we can derive the following corollary regarding the parity of cyclotomic double $t$-values.
\begin{cor}\label{cordoublecmtv}
Let $x$ and $y$ be N-th roots of unity, and $p,q\geq 1$ with $(p,y), (q,xy)\neq (1,1)$. Then
\[x \ti_{p,q}\Big(y,(xy)^{-1}\Big)-(-1)^{p+q}\ti_{p,q}\Big(y^{-1},xy\Big)\]
reduces to a combination of cyclotomic single $t$-values and cyclotomic single zeta values with level $\leq N$.
\end{cor}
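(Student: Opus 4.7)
The plan is to deduce this corollary as a direct consequence of Theorem~\ref{thm-linearETS}, combined with the simple stuffle identity
\[T_{p;q}(a;b) = \ti_{p,q}(a,b) + \ti_{p+q}(ab),\]
obtained by splitting the inner finite sum defining $t_n(p;a)$ according to whether the inner index is strictly less than or equal to the outer index. This identity is already noted just before the statement of the corollary.

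First, I would specialize the stuffle identity at $(a,b) = (y,(xy)^{-1})$ and at $(a,b)=(y^{-1},xy)$. Using $y\cdot (xy)^{-1} = x^{-1}$ and $y^{-1}\cdot xy = x$, this yields
\begin{align*}
\ti_{p,q}\bigl(y,(xy)^{-1}\bigr) &= T_{p;q}\bigl(y;(xy)^{-1}\bigr) - \ti_{p+q}(x^{-1}),\\
\ti_{p,q}(y^{-1},xy) &= T_{p;q}(y^{-1};xy) - \ti_{p+q}(x).
\end{align*}
Forming the linear combination in the statement then gives
\begin{align*}
&x\,\ti_{p,q}\bigl(y,(xy)^{-1}\bigr) - (-1)^{p+q}\ti_{p,q}(y^{-1},xy)\\
&\quad = \bigl[x\,T_{p;q}\bigl(y;(xy)^{-1}\bigr) - (-1)^{p+q}T_{p;q}(y^{-1};xy)\bigr] - x\,\ti_{p+q}(x^{-1}) + (-1)^{p+q}\ti_{p+q}(x).
\end{align*}

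Next, I would apply Theorem~\ref{thm-linearETS} to evaluate the bracketed quantity. Inspecting its right-hand side, one sees that every summand is either a single cyclotomic $t$-value $\ti_k(\xi)$ with $\xi \in \{x, x^{-1}, y, xy, (xy)^{-1}\}$, a single polylogarithm $\Li_k(\eta)$ with $\eta \in \{y, xy\}$ (which, by the definition in \eqref{defn-mpolyf}, is precisely a cyclotomic zeta value of depth one at a root of unity), or a scalar-multiple product of two such factors. The two leftover terms $-x\,\ti_{p+q}(x^{-1})+(-1)^{p+q}\ti_{p+q}(x)$ are of the same type.

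Finally, the level estimate follows by noting that if $x,y$ are $N$-th roots of unity then so are $x^{\pm 1}$, $y^{\pm 1}$, $xy$ and $(xy)^{-1}$, so every single value appearing above has level $\leq N$. There is no serious obstacle: all of the work has been carried out already in Theorem~\ref{thm-linearETS}, and the corollary is essentially a bookkeeping translation from cyclotomic linear Euler $T$-sums to cyclotomic double $t$-values via the stuffle relation.
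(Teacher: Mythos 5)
Your proposal is correct and matches the paper's own argument: the paper likewise records the stuffle identity $T_{p;q}(x;y)=\ti_{p,q}(x,y)+\ti_{p+q}(xy)$ immediately before the corollary and deduces the result by specializing it and invoking Theorem~\ref{thm-linearETS}. Your write-up simply makes the substitutions and the level bookkeeping explicit, which the paper leaves implicit.
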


\section{Parity Results of Quadratic Cyclotomic Euler $T$-Sums}
In this section, we employ the method of contour integration to derive the parity formulas for cyclotomic quadratic Euler $T$-sums and further present parity results for depth-three cyclotomic multiple $t$-values.

\begin{thm}
\label{thm-quadratic-C-ES-one} Let $x,x_1,x_2$ be roots of unity, and $p_1,p_2,q\geq 1$ with $(p_1,x_1), (p_2,x_2) $ and $ (q,xx_1x_2)\neq (1,1)$. We have
\begin{align}
&x T_{p_1,p_2;q}\Big(x_1,x_2;(xx_1x_2)^{-1}\Big)+(-1)^{p_1+p_2+q} T_{p_1,p_2;q}\Big(x_1^{-1},x_2^{-1};xx_1x_2\Big)\nonumber\\
&=x T_{p_1;p_2+q}\Big(x_1;(xx_1)^{-1}\Big)+x T_{p_2;p_1+q}\Big(x_2;(xx_2)^{-1}\Big)\nonumber\\
&\quad+x\ti_{p_1}(x_1)T_{p_2;q}\Big(x_2;(xx_1x_2)^{-1}\Big)+x\ti_{p_2}(x_2)T_{p_1;q}\Big(x_1;(xx_1x_2)^{-1}\Big)\nonumber\\
&\quad-(-1)^{p_2+q}x_1^{-1}\ti_{p_1}(x_1)T_{p_2;q}\Big(x_2^{-1};xx_1x_2\Big)-(-1)^{p_1+q}x_2^{-1}\ti_{p_2}(x_2)T_{p_1;q}\Big(x_1^{-1};xx_1x_2\Big)\nonumber\\
&\quad+(-1)^{p_1+p_2+q}\ti_{p_1+p_2+q}(x)-x \ti_{p_1}(x_1)\ti_{p_2+q}\Big((xx_1)^{-1}\Big)\nonumber\\&\quad-x \ti_{p_2}(x_2)\ti_{p_1+q}\Big((xx_2)^{-1}\Big)-x\ti_{p_1}(x_1)\ti_{p_2}(x_2)\ti_q\Big((xx_1x_2)^{-1}\Big)\nonumber\\
&\quad-(-1)^q(x_1x_2)^{-1}\ti_{p_1}(x_1)\ti_{p_2}(x_2)\ti_q(xx_1x_2)\nonumber\\
&\quad-\sum_{m+k=p_1+q-1,\atop m,k\geq 0} (-1)^k \binom{k+p_2-1}{p_2-1}\Li_{k+p_2}(x_2)\left((-1)^m\ti_{m+1}(x)-x \ti_{m+1}\Big(x^{-1}\Big)\right)\nonumber\\
&\quad-\sum_{m+k=p_2+q-1,\atop m,k\geq 0} (-1)^k \binom{k+p_1-1}{p_1-1}\Li_{k+p_1}(x_1)\left((-1)^m\ti_{m+1}(x)-x \ti_{m+1}\Big(x^{-1}\Big)\right)\nonumber\\
&\quad-(-1)^q\sum_{m=0}^{p_1+p_2-1}\binom{p_1+p_2+q-m-2}{q-1}\left((-1)^m\ti_{m+1}(x)-x \ti_{m+1}\Big(x^{-1}\Big)\right)\nonumber\\&\qquad\qquad\qquad\qquad\qquad\times \Li_{p_1+p_2+q-m-1}(xx_1x_2)\nonumber\\
&\quad-\sum_{m+k_1+k_2=q-1,\atop m,k_1,k_2\geq 0}(-1)^{k_1+k_2}\binom{k_1+p_1-1}{p_1-1}\binom{k_2+p_2-1}{p_2-1}\Li_{k_1+p_1}(x_1)\Li_{k_2+p_2}(x_2)\nonumber\\&\qquad\qquad\qquad\qquad\qquad\times \left((-1)^m\ti_{m+1}(x)-x \ti_{m+1}\Big(x^{-1}\Big)\right)\nonumber\\
&\quad-(-1)^q \sum_{m+k\leq p_2-1,\atop m,k\geq 0} \binom{k+p_1-1}{p_1-1}\binom{p_2+q-m-k-2}{q-1}\left((-1)^m\ti_{m+1}(x)-x \ti_{m+1}\Big(x^{-1}\Big)\right)\nonumber\\&\quad\times\left((-1)^k\Li_{k+p_1}(x_1)\Li_{p_2+q-m-k-1}(xx_1x_2)+(-1)^{p_1}S_{k+p_1;p_2+q-m-k-1}\Big(x_1^{-1};xx_1x_2\Big)\right)\nonumber\\
&\quad-(-1)^q \sum_{m+k\leq p_1-1,\atop m,k\geq 0} \binom{k+p_2-1}{p_2-1}\binom{p_1+q-m-k-2}{q-1} \left((-1)^m\ti_{m+1}(x)-x \ti_{m+1}\Big(x^{-1}\Big)\right)\nonumber\\&\quad\times\left((-1)^k\Li_{k+p_2}(x_2)\Li_{p_1+q-m-k-1}(xx_1x_2)+(-1)^{p_2}S_{k+p_2;p_1+q-m-k-1}\Big(x_2^{-1};xx_1x_2\Big)\right).
\end{align}
\end{thm}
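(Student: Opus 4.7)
The plan is to follow exactly the blueprint of Theorem \ref{thm-linearETS}: by Lemma \ref{lem-redisue-thm} and the discussion at the end of Section 2, the contour integral
\begin{align*}
\oint\limits_{(\infty)} F_{p_1,p_2,q}(x,x_1,x_2;s)\,ds := \oint\limits_{(\infty)} \frac{\Phi(s;x)\,\phi^{(p_1-1)}(s+1/2;x_1)\,\phi^{(p_2-1)}(s+1/2;x_2)}{(p_1-1)!\,(p_2-1)!\,(s+1/2)^q}\,(-1)^{p_1+p_2-2}\,ds
\end{align*}
vanishes, and the parity identity follows by balancing residues. The integrand has three families of poles: (i) every integer $n\in\mathbb{Z}$, simple, arising from $\Phi(s;x)$ (both $\phi^{(p_j-1)}(s+1/2;x_j)$ are regular at integers by Lemma \ref{lem-extend-rui-xu-one}); (ii) the half-integers $s=-n-1/2$ for $n\geq 1$, of combined order $p_1+p_2$; and (iii) the special point $s=-1/2$, of order $p_1+p_2+q$.

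The simple-pole contributions are computed using $\Res(\Phi(s;x),n)=x^{-n}$ (Lemma \ref{lem-rui-xu-two}) together with the values of $\phi^{(p_j-1)}(n+1/2;x_j)$ read off from Lemma \ref{lem-extend-rui-xu-one}. Summing over $n\geq 0$ gives the bilinear series
\begin{align*}
\sum_{n\geq 0}\frac{(xx_1x_2)^{-n}(x_1x_2)^{-1}}{(n+1/2)^q}\bigl(\ti_{p_1}(x_1)-t_n(p_1;x_1)\bigr)\bigl(\ti_{p_2}(x_2)-t_n(p_2;x_2)\bigr);
\end{align*}
expanding the product recovers $x\,T_{p_1,p_2;q}(x_1,x_2;(xx_1x_2)^{-1})$ along with the cross terms $x\,\ti_{p_j}(x_j)\,T_{p_{3-j};q}(x_{3-j};(xx_1x_2)^{-1})$ and $x\,\ti_{p_1}(x_1)\,\ti_{p_2}(x_2)\,\ti_q((xx_1x_2)^{-1})$. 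The sum over $n\leq -1$ is handled in mirror fashion and produces the $(-1)^{p_1+p_2+q}T_{p_1,p_2;q}(x_1^{-1},x_2^{-1};xx_1x_2)$-term together with the corresponding opposite-sign cross terms on the right-hand side.

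For the higher-order residues, at $s=-n-1/2$ ($n\geq 1$) one substitutes the Laurent expansions of Lemma \ref{lem-rui-xu-one} for both $\phi$-factors and the Taylor expansion of $\Phi(s;x)/(s+1/2)^q$, multiplies, and extracts the coefficient of $(s+n+1/2)^{p_1+p_2-1}$. Summation in $n$ then splits into four sub-cases according to which $\phi$-factor contributes its principal part versus its regular part: the principal-principal sub-case combines with $\sum_n (xx_1x_2)^n/n^{\bullet}$ to produce the linear cyclotomic Euler $T$-sums $x\,T_{p_j;p_{3-j}+q}(x_j;(xx_j)^{-1})$; the two principal-regular sub-cases yield precisely the two $S$-sum hybrids $S_{k+p_j;\,\bullet}(x_j^{-1};xx_1x_2)$ appearing in the statement; and the regular-regular sub-case produces the $\Li\cdot\Li$ double sum. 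The residue at $s=-1/2$ is then a triple Cauchy product of the three Taylor expansions of $\Phi$, $\phi^{(p_1-1)}(s+1/2;x_1)$, and $\phi^{(p_2-1)}(s+1/2;x_2)$ at $s=-1/2$ (cf.\ Lemmas \ref{lem-extend-rui-xu-one} and \ref{lem-extend-rui-xu-two}), with the coefficient of $(s+1/2)^{p_1+p_2+q-1}$ extracted; this supplies the lead $(-1)^{p_1+p_2+q-1}\ti_{p_1+p_2+q}(x)$ and all product terms of shape $\ti_{m+1}(x)\Li_{\bullet}(xx_1x_2)$, $\ti_{m+1}(x)\Li_{\bullet}(x_j)$, and $\ti_{m+1}(x)\Li_{\bullet}(x_1)\Li_{\bullet}(x_2)$.

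The main obstacle is purely combinatorial bookkeeping: the residue at $s=-1/2$ is a truncated triple convolution, the half-integer residue splits into four principal/regular sub-cases, and all of these must be summed in $n$ and re-assembled into the six families of product terms visible on the right-hand side. The only inputs required are the identities $\sum_{n\geq 1}\frac{x^n\,t_n(p;y)}{(n+1/2)^q}=T_{p;q}(y;x)\pm(\text{products})$ already implicit in Theorem \ref{thm-linearETS}, together with the binomial identity $\binom{k+p-1}{p-1}$ arising from differentiating $(s-n)^{-p}$ and $(s+1/2)^{-q}$. No tools beyond Lemmas \ref{lem-redisue-thm}--\ref{lem-extend-rui-xu-two} are needed, so the proof is a (fairly elaborate) direct analogue of the linear case.
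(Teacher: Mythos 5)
Your proposal follows the paper's proof exactly: the same contour integral $\oint_{(\infty)}\Phi(s;x)\phi^{(p_1-1)}(s+1/2;x_1)\phi^{(p_2-1)}(s+1/2;x_2)(s+1/2)^{-q}\,ds$, the same classification of poles (simple at the integers, order $p_1+p_2$ at $s=-n-1/2$ for $n\geq 1$, order $p_1+p_2+q$ at $s=-1/2$), and the same four-way residue balance via Lemma \ref{lem-redisue-thm}, with Lemmas \ref{lem-rui-xu-one}--\ref{lem-extend-rui-xu-two} supplying the local expansions. The method is sound and is precisely the one the authors use.

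That said, your bookkeeping of which residue family produces which block of the right-hand side contains two concrete errors that would derail the execution. First, the residues at $s=-n-1/2$, once summed over $n\geq 1$, can only produce series with \emph{integer} denominators: the local data there are $(x_1x_2)^n$, $x^n$, and powers of $n^{-1}$ coming from expanding $(s+1/2)^{-q}$ about $-n-1/2$. Hence they yield polylogarithms $\Li_{\bullet}(xx_1x_2)$ and cyclotomic Euler sums $S_{\bullet;\bullet}(\cdot;xx_1x_2)$, never $T$-sums, whose denominators are half-integers. In particular the principal-principal sub-case does \emph{not} give $xT_{p_j;p_{3-j}+q}\bigl(x_j;(xx_j)^{-1}\bigr)$; it gives the family $\binom{p_1+p_2+q-m-2}{q-1}\bigl((-1)^m\ti_{m+1}(x)-x\ti_{m+1}(x^{-1})\bigr)\Li_{p_1+p_2+q-m-1}(xx_1x_2)$. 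The two $T_{p_j;p_{3-j}+q}$ terms actually arise from the integer-point residues: after reindexing $n\mapsto n-1$ in the sum over nonnegative integers one has $t_{n-1}(p;x_j)=t_n(p;x_j)-x_j^n(n-1/2)^{-p}$, and the cross term $t_n(p_1;x_1)\cdot x_2^n(n-1/2)^{-p_2}$ is exactly what assembles into $xT_{p_1;p_2+q}\bigl(x_1;(xx_1)^{-1}\bigr)$ (likewise for the $\ti_{p_j}(x_j)\ti_{p_{3-j}+q}\bigl((xx_j)^{-1}\bigr)$ products, which you also omit from the integer-point contribution). Second, the ``regular-regular'' sub-case at $s=-n-1/2$ contributes nothing at all: the product of the two regular parts with the factor $\Phi(s;x)(s+1/2)^{-q}$, which is holomorphic there, has no residue. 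The $\Li_{k_1+p_1}(x_1)\Li_{k_2+p_2}(x_2)$ double sum instead comes from the residue at $s=-1/2$, where the additional pole of $(s+1/2)^{-q}$ allows both regular parts to contribute. With these attributions corrected, the computation assembles into the stated identity exactly as in the linear case.
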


\begin{proof}
The proof of this theorem is based on residue calculations of the following contour integral:
\[\oint\limits_{\left( \infty  \right)}F_{p_1p_2,q}(s)ds:= \oint\limits_{\left( \infty  \right)}\frac{\Phi(s;x)\phi^{(p_1-1)}(s+1/2;x_1)\phi^{(p_2-1)}(s+1/2;x_2)}{(p_1-1)!(p_2-1)!(s+1/2)^q} (-1)^{p_1+p_2}ds=0.\]
Clearly, the integrand $F^{(a)}_{p_1p_2,q}(x,x_1,x_2;s)$ possesses the following poles in the complex plane: 1. All integer points are simple poles; 2. $s=-1/2$ is a pole of order $p_1+p_2+q$; 3. $s=-n-1/2$ (where $n$ is a positive integer) is a pole of order $p_1+p_2$. Applying Lemmas \ref{lem-rui-xu-two} and \ref{lem-extend-rui-xu-one}, through a direct computation, we obtain the residue values at integer points as follows:
\begin{align*}
&\Res\left(F_{p_1p_2,q}(\cdot;s),n\right)=\frac{x^{-n}(x_1x_2)^{-n-1}}{(n+1/2)^q}\left(\ti_{p_1}(x_1)-t_n(p_1;x_1)\right)
\\&\qquad\qquad\qquad\qquad\qquad\qquad\qquad\times\left(\ti_{p_2}(x_2)-t_n(p_2;x_2)\right)\quad (n\in \N_0),\\
&\Res\left(F_{p_1p_2,q}(\cdot;s),-n\right)=(-1)^q \frac{(xx_1x_2)^n}{(n-1/2)^q}\left(\ti_{p_1}(x_1)x_1^{-1}+(-1)^{p_1}t_n\Big(p_1;x_1^{-1}\Big)\right)\\&\qquad\qquad\qquad\qquad\qquad\qquad\qquad
\times\left(\ti_{p_2}(x_2)x_2^{-1}+(-1)^{p_2}t_n\Big(p_2;x_2^{-1}\Big)\right)\quad (n\in \N).
\end{align*}
Applying Lemmas \ref{lem-rui-xu-one} and \ref{lem-extend-rui-xu-two}, after lengthy calculations, we obtain
\begin{align*}
&\Res\left(F_{p_1p_2,q}(\cdot;s),-1/2\right)\\&=\frac{1}{(p_1+p_2+q-1)!}\lim_{s\rightarrow -1/2}\frac{d^{p_1+p_2+q-1}}{ds^{p_1+p_2+q-1}}\left\{(s+1/2)^{p_1+p_2+q}F_{p_1p_2,q}(x,x_1,x_2;s)\right\}\\
&=(-1)^{p_1+p_2+q-1}\ti_{p_1+p_2+q}(x)-x\ti_{p_1+p_2+q}\Big(x^{-1}\Big)\\
&\quad+\sum_{m+k=p_1+q-1,\atop m,k\geq 0} (-1)^k \binom{k+p_2-1}{p_2-1}\Li_{k+p_2}(x_2)\left((-1)^m\ti_{m+1}(x)-x \ti_{m+1}\Big(x^{-1}\Big)\right)\\
&\quad+\sum_{m+k=p_2+q-1,\atop m,k\geq 0} (-1)^k \binom{k+p_1-1}{p_1-1}\Li_{k+p_1}(x_1)\left((-1)^m\ti_{m+1}(x)-x \ti_{m+1}\Big(x^{-1}\Big)\right)\\
&\quad+\sum_{m+k_1+k_2=q-1,\atop m,k_1,k_2\geq 0}(-1)^{k_1+k_2}\binom{k_1+p_1-1}{p_1-1}\binom{k_2+p_2-1}{p_2-1}\Li_{k_1+p_1}(x_1)\Li_{k_2+p_2}(x_2)\nonumber\\&\qquad\qquad\qquad\qquad\qquad\times \left((-1)^m\ti_{m+1}(x)-x \ti_{m+1}\Big(x^{-1}\Big)\right)
\end{align*}
and for $n\in\N$,
\begin{align*}
&\Res\left(F_{p_1p_2,q}(\cdot;s),-n-1/2\right)\\&=\frac{1}{(p_1+p_2-1)!}\lim_{s\rightarrow -n-1/2}\frac{d^{p_1+p_2-1}}{ds^{p_1+p_2-1}}\left\{(s+n+1/2)^{p_1+p_2}F_{p_1p_2,q}(x,x_1,x_2;s)\right\}\\
&=(-1)^q\sum_{m=0}^{p_1+p_2-1}\binom{p_1+p_2+q-m-2}{q-1}\left((-1)^m\ti_{m+1}(x)-x \ti_{m+1}\Big(x^{-1}\Big)\right)\\&\qquad\qquad\qquad\qquad\qquad\times \frac{(xx_1x_2)^n}{n^{p_1+p_2+q-m-1}}\\
&\quad+(-1)^q \sum_{m+k\leq p_2-1,\atop m,k\geq 0} \binom{k+p_1-1}{p_1-1}\binom{p_2+q-m-k-2}{q-1}\frac{(xx_1x_2)^n}{n^{p_2+q-m-k-1}} \nonumber\\&\quad\times\left((-1)^m\ti_{m+1}(x)-x \ti_{m+1}\Big(x^{-1}\Big)\right)\left((-1)^k\Li_{k+p_1}(x_1)+(-1)^{p_1}\zeta_n\Big(k+p_1;x_1^{-1}\Big)\right)\\
&\quad+(-1)^q \sum_{m+k\leq p_1-1,\atop m,k\geq 0} \binom{k+p_2-1}{p_2-1}\binom{p_1+q-m-k-2}{q-1}\frac{(xx_1x_2)^n}{n^{p_1+q-m-k-1}} \nonumber\\&\quad\times\left((-1)^m\ti_{m+1}(x)-x \ti_{m+1}\Big(x^{-1}\Big)\right)\left((-1)^k\Li_{k+p_2}(x_1)+(-1)^{p_2}\zeta_n\Big(k+p_2;x_2^{-1}\Big)\right).
\end{align*}
By Lemma \ref{lem-redisue-thm}, we have
\begin{align*}
&\sum_{n=0}^\infty \Res\left(F_{p_1p_2,q}(\cdot;s),n\right)+\sum_{n=1}^\infty \Res\left(F_{p_1p_2,q}(\cdot;s),-n\right)\\&\quad+\sum_{n=1}^\infty\Res\left(F_{p_1p_2,q}(\cdot;s),-n-1/2\right)+\Res\left(F_{p_1p_2,q}(\cdot;s),-1/2\right)=0.
\end{align*}
Substituting the four residue results obtained above consequently proves Theorem \ref{thm-quadratic-C-ES-one}.
\end{proof}

\begin{exa}
Setting $(p_1,p_2,q)=(1,1,2)$ in Theorem \ref{thm-quadratic-C-ES-one}, we have
\begin{align*}
&xT_{1,1;2}\Big(x_1,x_2;(xx_1x_2)^{-1}\Big)+T_{1,1;2}\Big(x_1^{-1},x_2^{-1};xx_1x_2\Big)
\\&=-\Big(\ti_3(x)-x\ti_3\Big(x^{-1}\Big)\Big)\Big(\Li_1(x_1)+\Li_1(x_2)\Big)
\\&\quad-\Big(\ti_2(x)+x\ti_2\Big(x^{-1}\Big)\Big)\Big(\Li_2(x_1)+\Li_2(x_2)-\Li_2(xx_1x_2)-\Li_1(x_1)\Li_1(x_2)\Big)
\\&\quad-\Big(\ti_1(x)-x\ti_1\Big(x^{-1}\Big)\Big)\Big\{\Li_3(x_1)+\Li_3(x_2)+2\Li_3(xx_1x_2)-\Li_2(x_1)\Li_1(x_2)-\Li_1(x_1)\Li_2(x_2)\\
&\qquad\qquad+\Li_1(x_1)\Li_2(xx_1x_2)+\Li_1(x_2)\Li_2(xx_1x_2)-S_{1;2}\Big(x_1^{-1};xx_1x_2\Big)-S_{1;2}\Big(x_2^{-1};xx_1x_2\Big)\Big\}
\\&\quad+xT_{1;3}\Big(x_1;(xx_1)^{-1}\Big)+xT_{1;3}\Big(x_2;(xx_2)^{-1}\Big)+x\ti_1(x_1)T_{1;2}\Big(x_2;(xx_1x_2)^{-1}\Big)
\\&\quad+x\ti_1(x_2)T_{1;2}\Big(x_1;(xx_1x_2)^{-1}\Big)+x_1^{-1}\ti_1(x_1)T_{1;2}\Big(x_2^{-1};xx_1x_2\Big)+x_2^{-1}\ti_1(x_2)T_{1;2}\Big(x_1^{-1};xx_1x_2\Big)
\\&\quad+\ti_4(x)-x\ti_1(x_1)\ti_3\Big(\Big(xx_1\Big)^{-1}\Big)-x\ti_1(x_2)\ti_3\Big((xx_2)^{-1}\Big)-x\ti_1(x_1)\ti_1(x_2)\ti_2\Big((xx_1x_2)^{-1}\Big)
\\&\quad-(x_1x_2)^{-1}\ti_1(x_1)\ti_1(x_2)\ti_2(xx_1x_2).
\end{align*}
Setting $(p_1,p_2,q)=(1,2,2)$ in Theorem \ref{thm-quadratic-C-ES-one}, we have
\begin{align*}
&xT_{1,2;2}\Big(x_1,x_2;(xx_1x_2)^{-1}\Big)-T_{1,2;2}\Big(x_1^{-1},x_2^{-1};xx_1x_2\Big)\\
&=xT_{1;4}\Big(x_1;(xx_1)^{-1}\Big)+xT_{2;3}\Big(x_2;(xx_2)^{-1}\Big)+x\ti_1(x_1)T_{2;2}\Big(x_2;(xx_1x_2)^{-1}\Big)
\\&\quad+x\ti_2(x_2)T_{1;2}\Big(x_1;(xx_1x_2)^{-1}\Big)-x_1^{-1}\ti_1(x_1)T_{2;2}\Big(x_2^{-1};xx_1x_2\Big)+x_2^{-1}\ti_2(x_2)T_{1;2}\Big(x_1^{-1};xx_1x_2\Big)
\\&\quad-\ti_5(x)-x\ti_1(x_1)\ti_4\Big((xx_1)^{-1}\Big)-x\ti_2(x_2)\ti_3\Big((xx_2)^{-1}\Big)-x\ti_1(x_1)\ti_2(x_2)\ti_2\Big((xx_1x_2)^{-1}\Big)
\\&\quad-(x_1x_2)^{-1}\ti_1(x_1)\ti_2(x_2)\ti_2(xx_1x_2)+\Big(\ti_4(x)+x\ti_4\Big(x^{-1}\Big)\Big)\Li_1(x_1)\\&\quad
+\Big(\ti_3(x)-x\ti_3\Big(x^{-1}\Big)\Big)\Big(\Li_2(x_1)-\Li_2(x_2)-\Li_2(xx_1x_2)\Big)
\\&\quad+\Big(\ti_2(x)+x\ti_2\Big(x^{-1}\Big)\Big)\Big(\Li_3(x_1)-2\Li_3(x_2)+2\Li_3(xx_1x_2)+\Li_1(x_1)\Li_2(x_2)
\\&\qquad\qquad\qquad\qquad\qquad\qquad\qquad+\Li_1(x_1)\Li_2(xx_1x_2)-S_{1;2}\Big(x_1^{-1};xx_1x_2\Big)\Big)
\\&\quad+\Big(\ti_1(x)-x\ti_1\Big(x^{-1}\Big)\Big)\Big\{\Li_4(x_1)-3\Li_4(x_2)-3\Li_4(xx_1x_2)+\Li_2(x_1)\Li_2(x_2)
\\&\quad\qquad+2\Li_1(x_1)\Li_3(x_2)-2\Li_1(x_1)\Li_3(xx_1x_2)+\Li_2(x_1)\Li_2(xx_1x_2)-\Li_2(x_2)\Li_2(xx_1x_2)
\\&\quad\qquad\qquad\qquad\qquad\qquad\qquad+2S_{1;3}\Big(x_1^{-1};xx_1x_2\Big)+S_{2;2}\Big(x_1^{-1};xx_1x_2\Big)-S_{2;2}\Big(x_2^{-1};xx_1x_2\Big)\Big\}.
\end{align*}
\end{exa}

Finally, according to definition of cyclotomic quadratic Euler $T$-sums and cyclotomic triple $t$-values, for $(p_1,x_1),(q,x)\neq (1,1)$, we have
\begin{align*}
T_{p_1,p_2;q}(x_1,x_2;x)&=\sum_{n=1}^\infty \frac{t_n(p_1;x_1)t_n(p_2;x_2)}{(n-1/2)^q}x^n\\
&=\sum_{n=1}^\infty \frac{\left(t_n(p_1;x_1)-\ti_{p_1}(x_1)\right)t_n(p_2;x_2)}{(n-1/2)^q}x^n+\ti_{p_1}(x_1) \sum_{n=1}^\infty \frac{t_n(p_2;x_2)}{(n-1/2)^q}x^n\\
&=\sum_{n=1}^\infty \frac{\left(t_n(p_1;x_1)-\ti_{p_1}(x_1)\right)t_{n-1}(p_2;x_2)}{(n-1/2)^q}x^n+\sum_{n=1}^\infty \frac{t_n(p_1;x_1)-\ti_{p_1}(x_1)}{(n-1/2)^{p_2+q}}(x_2x)^n\\
&\quad+\ti_{p_1}(x_1) \sum_{n=1}^\infty \frac{\left(t_{n-1}(p_2;x_2)+\frac{x_2^n}{(n-1/2)^{p_2}}\right)}{(n-1/2)^q}x^n\\
&=-\ti_{p_2,q,p_1}(x_2,x,x_1)-\ti_{p_2+q,p_1}(x_2x,x_1)+\ti_{p_1}(x_1)\left(\ti_{p_2,q}(x_2,x)+\ti_{p_2+q}(x_2x)\right).
\end{align*}
Therefore, we can derive the following corollary regarding the parity of cyclotomic triple $t$-values with a direct calculation.
\begin{cor}\label{cor-quadratic-C-ES-one}
Let $x,y,z$ be $N$th-roots of unity, and $p,m,q\geq 1$ with $(p,x), (q,y)$ and $ (m,z)\neq (1,1)$. Then
\begin{align*}
\ti_{p,q,m}(x,y,z)+(-1)^{p+q+m}xyz\ti_{p,q,m}\Big(x^{-1},y^{-1},z^{-1}\Big)
\end{align*}
reduces to a combination of cyclotomic double zeta values, cyclotomic double $t$-values, cyclotomic single $t$-values and cyclotomic single zeta values with level $\leq N$.
\end{cor}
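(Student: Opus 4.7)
The plan is to reduce the parity statement for the cyclotomic triple $t$-value $\ti_{p,q,m}(x,y,z)$ to the parity statement for a cyclotomic quadratic Euler $T$-sum, which is already established in Theorem \ref{thm-quadratic-C-ES-one}. No new contour integration is required; the work is purely algebraic bookkeeping built on top of the identity displayed just before the statement of the corollary.

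Concretely, I would start from the stuffle-type identity derived in the paragraph preceding the corollary,
\begin{equation*}
T_{p_1,p_2;q}(x_1,x_2;x) = -\ti_{p_2,q,p_1}(x_2,x,x_1) - \ti_{p_2+q,p_1}(x_2x,x_1) + \ti_{p_1}(x_1)\bigl(\ti_{p_2,q}(x_2,x) + \ti_{p_2+q}(x_2x)\bigr),
\end{equation*}
solve it for $\ti_{p_2,q,p_1}(x_2,x,x_1)$, and then specialize with $(p_2,q,p_1;x_2,x,x_1) = (p,q,m;x,y,z)$ to obtain
\begin{equation*}
\ti_{p,q,m}(x,y,z) = -T_{m,p;q}(z,x;y) - \ti_{p+q,m}(xy,z) + \ti_m(z)\ti_{p,q}(x,y) + \ti_m(z)\ti_{p+q}(xy).
\end{equation*}
Applying the same specialization with all arguments inverted gives a parallel expression for $\ti_{p,q,m}(x^{-1},y^{-1},z^{-1})$.

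Next I would form the target combination $\ti_{p,q,m}(x,y,z) + (-1)^{p+q+m}xyz\,\ti_{p,q,m}(x^{-1},y^{-1},z^{-1})$ using these two expansions. The $T$-sum pieces collect into
\begin{equation*}
-\bigl[T_{m,p;q}(z,x;y) + (-1)^{m+p+q}xyz\, T_{m,p;q}(z^{-1},x^{-1};y^{-1})\bigr],
\end{equation*}
which is exactly the parity combination that Theorem \ref{thm-quadratic-C-ES-one} controls: applying that theorem with the identification $x\mapsto (xyz)^{-1}$, $x_1\mapsto z$, $x_2\mapsto x$, $p_1\mapsto m$, $p_2\mapsto p$ (for which the hypotheses $(p_1,x_1),(p_2,x_2),(q,xx_1x_2)\ne(1,1)$ translate exactly into the stated $(m,z),(p,x),(q,y)\ne(1,1)$) and multiplying through by $xyz$ expresses this combination as an explicit sum of products of single $\ti$'s, single $\Li$'s, linear Euler $T$-sums $T_{\cdot;\cdot}$, and linear Euler sums $S_{\cdot;\cdot}$. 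Each such linear $T$-sum decomposes via $T_{a;b}(u;v)=\ti_{a,b}(u,v)+\ti_{a+b}(uv)$ into a cyclotomic double $t$-value plus a single $t$-value, and each linear Euler sum similarly decomposes into a cyclotomic double zeta value plus a single zeta value. The remaining contributions from the expansion of $\ti_{p,q,m}$ are a cyclotomic double $t$-value $\ti_{p+q,m}(xy,z)$ (and its inverted partner) plus products of single $t$-values with double $t$-values and single $t$-values, all of which lie in the permitted class.

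The main obstacle, though essentially clerical, is to verify the level bookkeeping: all arguments that appear on the right-hand side of Theorem \ref{thm-quadratic-C-ES-one} under our substitution are monomials in $x,y,z$ and their inverses, so they remain $N$-th roots of unity, keeping every product term at level $\le N$. One also needs to check that the admissibility conditions required to apply the stuffle identity, the parity theorem, and the residue-based formulas all hold under the single set of hypotheses $(p,x),(q,y),(m,z)\ne(1,1)$; this is straightforward but has to be tracked carefully because the leading index of each reduced piece changes with each rewriting. Once this bookkeeping is complete, the corollary follows by simply reading off the collected right-hand side.
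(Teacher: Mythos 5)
Your proposal is correct and follows essentially the same route as the paper: the authors derive exactly the identity $T_{p_1,p_2;q}(x_1,x_2;x)=-\ti_{p_2,q,p_1}(x_2,x,x_1)-\ti_{p_2+q,p_1}(x_2x,x_1)+\ti_{p_1}(x_1)\bigl(\ti_{p_2,q}(x_2,x)+\ti_{p_2+q}(x_2x)\bigr)$ immediately before the corollary and then invoke Theorem \ref{thm-quadratic-C-ES-one} ``with a direct calculation.'' Your specialization, sign tracking, and decomposition of the residual linear $T$-sums and Euler sums into double/single $t$- and zeta values just make explicit the bookkeeping the paper leaves to the reader.
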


\begin{exa}
 Let $(p,q,m)=(1,2,1)$ in Corollary \ref{cor-quadratic-C-ES-one}, we have
\begin{align*}
 &\ti_{1,2,1}(x,y,z)+xyz\ti_{1,2,1}\Big(x^{-1},y^{-1},z^{-1}\Big)
 \\&=\Big(xyz\ti_3\Big((xyz)^{-1}\Big)-\ti_3(xyz)\Big)\Big\{\Li_1(x)+\Li_1(z)\Big\}
\\&\quad+\Big(xyz\ti_2\Big((xyz)^{-1}\Big)+\ti_2(xyz)\Big)\Big\{\Li_2(x)+\Li_2(z)-\Li_2\Big(y^{-1}\Big)-\Li_1(x)\Li_1(z)\Big\}
 \\&\quad+\Big(xyz\ti_1\Big((xyz)^{-1}\Big)-\ti_1(xyz)\Big)\Big\{\Li_3(x)+\Li_3(z)+2\Li_3\Big(y^{-1}\Big)-\Li_1(x)\Li_2(z)-\Li_1(z)\Li_2(x)\\
 &\qquad\qquad\qquad\qquad\qquad+\Li_1(z)\Li_2\Big(y^{-1}\Big)+\Li_1(x)\Li_2\Big(y^{-1}\Big)-S_{1;2}\Big(z^{-1};y^{-1}\Big)-S_{1;2}\Big(x^{-1};y^{-1}\Big)\Big\}
 \\&\quad-\ti_{1,3}(z,xy)-\ti_{1,3}(x,yz)-\ti_{3,1}(xy,z)-2\ti_4(xyz)+\ti_1(z)\ti_{2,1}(x,y)-\ti_1(z)\ti_{1,2}(x,y)
 \\&\quad-\ti_1(x)\ti_{1,2}(z,y)-xy\ti_1(z)\ti_{1,2}\Big(x^{-1},y^{-1}\Big)+\ti_1(z)\ti_{3}(xy)-xy\ti_1(z)\ti_{3}\Big((xy)^{-1}\Big)
 \\&\quad- yz\ti_1(x)\ti_{1,2}\Big(z^{-1},y^{-1}\Big)-yz\ti_1(x)\ti_{3}\Big((yz)^{-1}\Big)-xyz\ti_4\Big((xyz)^{-1}\Big)+\ti_1(x)\ti_1(z)\ti_2(y)
 \\&\quad+y\ti_1(x)\ti_1(z)\ti_2\Big(y^{-1}\Big)-xyz\ti_{3,1}\Big((xy)^{-1},z^{-1}\Big)+xyz\ti_1\Big(z^{-1}\Big)\ti_{2,1}\Big(x^{-1},y^{-1}\Big)\\&\quad+xyz\ti_1\Big(z^{-1}\Big)\ti_{3}\Big((xy)^{-1}\Big).
\end{align*}
 Let $(p,q,m)=(2,2,2)$ in Corollary \ref{cor-quadratic-C-ES-one}, we have
\begin{align*}
 &\ti_{2,2,2}(x,y,z)+xyz\ti_{2,2,2}\Big(x^{-1},y^{-1},z^{-1}\Big)
 \\&=-\Big(xyz\ti_4\Big((xyz)^{-1}\Big)+\ti_4(xyz)\Big)\Big\{\Li_2(x)+\Li_2(z)+\Li_2\Big(y^{-1}\Big)\Big\}
\\&\quad-\Big(xyz\ti_3\Big((xyz)^{-1}\Big)-\ti_3(xyz)\Big)\Big\{2\Li_3(x)+2\Li_3(z)-2\Li_3\Big(y^{-1}\Big)\Big\}
\\&\quad-\Big(xyz\ti_2\Big((xyz)^{-1}\Big)+\ti_2(xyz)\Big)\Big\{3\Li_4(x)+3\Li_4(z)+3\Li_4\Big(y^{-1}\Big)+\Li_2(z)\Li_2(x)
\\&\qquad\qquad\qquad\qquad+\Li_2(z)\Li_2\Big(y^{-1}\Big)+\Li_2(x)\Li_2\Big(y^{-1}\Big)+S_{2;2}\Big(z^{-1};y^{-1}\Big)+S_{2;2}\Big(x^{-1};y^{-1}\Big)\Big\}
 \\&\quad-\Big(xyz\ti_1\Big((xyz)^{-1}\Big)-\ti_1(xyz)\Big)\Big\{4\Li_5(x)+4\Li_5(z)-4\Li_5\Big(y^{-1}\Big)+2\Li_3(z)\Li_2(x)
\\&\qquad\qquad\qquad+2\Li_2(z)\Li_3(x)-2\Li_2(z)\Li_3\Big(y^{-1}\Big)+2\Li_3(z)\Li_2\Big(y^{-1}\Big)-2\Li_2(x)\Li_3\Big(y^{-1}\Big)
\\&\qquad\qquad\qquad+2\Li_3(x)\Li_2\Big(y^{-1}\Big)-2S_{2;3}\Big(z^{-1};y^{-1}\Big)-2S_{3;2}\Big(z^{-1};y^{-1}\Big)-2S_{2;3}\Big(x^{-1};y^{-1}\Big)
\\&\qquad\qquad\qquad-2S_{3;2}\Big(x^{-1};y^{-1}\Big)\Big\}
 \\&\quad-\ti_{4,2}(xy,z)-\ti_{2,4}(x,yz)+\ti_2(z)\ti_4(xy)-xyz\ti_{4,2}\Big((xy)^{-1},z^{-1}\Big)
 \\&\quad+xyz\ti_2\Big(z^{-1}\Big)\ti_{2,2}\Big(x^{-1},y^{-1}\Big)+xyz\ti_2\Big(z^{-1}\Big)\ti_4\Big((xy)^{-1}\Big)-\ti_{2,4}(z,xy)-2\ti_6(xyz)
 \\&\quad-\ti_2(x)\ti_{2,2}(z,y)+xy\ti_2(z)\ti_{2,2}\Big(x^{-1},y^{-1}\Big)+xy\ti_2(z)\ti_4\Big((xy)^{-1}\Big)
 \\&\quad+yz\ti_2(x)\ti_{2,2}\Big(z^{-1},y^{-1}\Big)+yz\ti_2(x)\ti_4\Big((yz)^{-1}\Big)-xyz\ti_6\Big((xyz)^{-1}\Big)
 \\&\quad+\ti_2(x)\ti_2(y)\ti_2(z)+y\ti_2(x)\ti_2\Big(y^{-1}\Big)\ti_2(z).
\end{align*}
\end{exa}

\begin{exa}
    Considering $(p,q,m)=(2,2,2)$ and $x=x_1=x_2=1$ in Corollary \ref{cor-quadratic-C-ES-one}, we have
    \begin{align*}
    &t(2,2,2)=\frac{5}{9}t(2)t(2)t(2)+t(2)t(2,2)+\frac{1}{3}t(2)t(4)-t(2,4)-t(4,2)-\frac{3}{2}t(6).
    \end{align*}
    Note that $t(2)t(2)=\frac{3}{2}t(4)$ (see \cite{H2019}), by using the stuffle relations among multiple $t$-values, we obtain
    \begin{align*}
    t(2,2,2)=\frac{1}{48}t(6).
    \end{align*}
\end{exa}

\section{Parity Results of Generalized Cyclotomic Euler $T$-Sums}
In this section, we utilize the method of contour integration to present parity results and several examples for cyclotomic Euler $T$-sums of arbitrary order.
\begin{thm}\label{thm-parityc-CES-one} Let $x,x_1,\ldots,x_r$ be roots of unity, and $p_1,\ldots,p_r,q\geq 1$ with $(p_j,x_j) $ and $ (q,xx_1\cdots x_r)\neq (1,1)$. The
\begin{align*}
&x T_{p_1,p_2,\ldots,p_r;q}\Big(x_1,x_2,\ldots,x_r;(xx_1\cdots x_r)^{-1}\Big)\\&\quad+(-1)^{p_1+p_2+\cdots+p_r+q+r}T_{p_1,p_2,\ldots,p_r;q}\Big(x_1^{-1},x_2^{-1},\ldots,x_r^{-1};xx_1\cdots x_r\Big)
\end{align*}
reduces to a combination of sums of lower orders (It should be emphasized that the lower-order sums include not only cyclotomic Euler $T$-sums but also cyclotomic Euler sums).
\end{thm}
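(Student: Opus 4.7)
The plan is to mirror the structure used for the linear case (Theorem \ref{thm-linearETS}) and the quadratic case (Theorem \ref{thm-quadratic-C-ES-one}), applied to the natural $r$-fold generalization of the integrand. Concretely, I would apply the residue lemma (Lemma \ref{lem-redisue-thm}) to the contour integral
\begin{align*}
\oint\limits_{(\infty)} F_{p_1,\ldots,p_r;q}(s)\,ds := \oint\limits_{(\infty)} \frac{\Phi(s;x)\prod_{j=1}^{r}\phi^{(p_j-1)}(s+1/2;x_j)}{(p_1-1)!\cdots (p_r-1)!\,(s+1/2)^q}(-1)^{\sum_{j} p_j-r}\,ds = 0.
\end{align*}
The decay estimates on $\Phi$ and on $\phi^{(p-1)}$ recalled in Section~2 guarantee that this integral indeed vanishes, so the sum of all residues of the integrand is zero.

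The poles of $F_{p_1,\ldots,p_r;q}(s)$ are exactly those that appeared in the $r=1,2$ cases: simple poles at every integer $n\in\Z$ (from $\Phi(s;x)$), a pole of order $P:=p_1+\cdots+p_r$ at each $s=-n-1/2$ with $n\in\N$ (from the product of $\phi^{(p_j-1)}$'s), and a pole of order $P+q$ at $s=-1/2$. Using Lemmas \ref{lem-rui-xu-one}--\ref{lem-extend-rui-xu-two}, the residues at $n\geq 0$ and $-n$ ($n\geq 1$) are, respectively,
\begin{align*}
\Res(F_{p_1,\ldots,p_r;q},n) &= \frac{x^{-n}(x_1\cdots x_r)^{-n-1}}{(n+1/2)^q}\prod_{j=1}^{r}\!\bigl(\ti_{p_j}(x_j)-t_n(p_j;x_j)\bigr),\\
\Res(F_{p_1,\ldots,p_r;q},-n) &= (-1)^q\frac{(xx_1\cdots x_r)^{n}}{(n-1/2)^q}\prod_{j=1}^{r}\!\bigl(\ti_{p_j}(x_j)x_j^{-1}+(-1)^{p_j}t_n(p_j;x_j^{-1})\bigr).
\end{align*}
Summing over $n$ and expanding the two products by multilinearity, the unique maximal-order contribution from the first sum is $x\cdot T_{p_1,\ldots,p_r;q}(x_1,\ldots,x_r;(xx_1\cdots x_r)^{-1})$, corresponding to choosing the $t_n$-factor in every slot; similarly the unique order-$r$ contribution from the second sum is $(-1)^{P+q+r}T_{p_1,\ldots,p_r;q}(x_1^{-1},\ldots,x_r^{-1};xx_1\cdots x_r)$, obtained by choosing the $t_n(\cdot;x_j^{-1})$ factor throughout. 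Every other term in the multilinear expansion replaces at least one $t_n$ by a constant $\ti_{p_j}(x_j)$ (or $\ti_{p_j}(x_j)x_j^{-1}$), so it yields a cyclotomic Euler $T$-sum of strictly smaller order, multiplied by a product of single-variable $t$-values.

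For the residues at the half-integer points, I would compute them by Leibniz's formula applied to the relevant $(P-1)$-th or $(P+q-1)$-th derivative, expanding each $\phi^{(p_j-1)}(s+1/2;x_j)$ according to Lemma \ref{lem-extend-rui-xu-one} (giving coefficients $\ti_{m+p_j}(x_j)$ or $t_n(m+p_j;x_j)$ together with $\Li$ and $\zeta_n$ contributions) and $\Phi(s;x)$ according to Lemma \ref{lem-extend-rui-xu-two}. Summing the $-n-1/2$ residues over $n\geq 1$ converts the $\zeta_n(\cdot;x_j^{-1})$ factors into cyclotomic Euler sums $S_{\cdots}$ of order at most $r-1$, while the $-1/2$ residue produces a finite combination of products of $\ti$- and $\Li$-values of order strictly less than $r$. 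Thus each contribution from the half-integer poles is a combination of cyclotomic Euler sums and cyclotomic Euler $T$-sums of order $<r$. Combining everything via Lemma \ref{lem-redisue-thm} yields the claimed reduction modulo products of lower order.

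The main technical obstacle is purely bookkeeping: in the Leibniz expansion of the $(P+q-1)$-th derivative at $s=-1/2$, one must control an $r$-fold nested sum of multinomial coefficients, and verify that no hidden order-$r$ contribution appears outside the two main terms identified above. This is essentially what made the quadratic case already lengthy; I would write the general case using a single multi-index notation $\bfm=(m_1,\ldots,m_r)$ and $\bfk=(k_1,\ldots,k_r)$ to keep the combinatorial identities transparent, and then simply observe that every summand produced by Leibniz (other than the two main integer-residue contributions) contains at least one factor of the form $\ti_{*}(x_j)$, $\Li_{*}(x_j)$, or $S_{*;*}(x_j^{-1};\cdot)$, which lowers the order by at least one. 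Once that structural observation is in place, no further computation is required to conclude the theorem.
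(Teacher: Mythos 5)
Your proposal follows essentially the same route as the paper's own proof: the identical contour integral $\oint_{(\infty)}\Phi(s;x)\prod_j\phi^{(p_j-1)}(s+1/2;x_j)\,(s+1/2)^{-q}\,ds$, the same classification of poles (simple at integers, order $p_1+\cdots+p_r$ at $-n-1/2$ for $n\geq 1$, order $p_1+\cdots+p_r+q$ at $-1/2$), the same integer-point residue formulas, and the same multilinearity argument isolating the two order-$r$ terms while relegating everything else, including the half-integer contributions, to lower order. Your added structural remark on why the Leibniz expansion at the half-integer poles cannot produce a hidden order-$r$ term is a slightly more explicit justification of a step the paper merely asserts, but the argument is the same.
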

\begin{proof}
The proof of this theorem is based on residue calculations of the following contour integral:
\begin{align*}
&\oint\limits_{\left( \infty  \right)}F_{p_1p_2\cdots p_r,q}(x,x_1,x_2,\cdots,x_r;s)ds\\
&:= \oint\limits_{\left( \infty  \right)}\frac{\Phi(s;x)\phi^{(p_1-1)}(s+1/2;x_1)\cdots\phi^{(p_r-1)}(s+1/2;x_r)}{(p_1-1)!\cdots (p_r-1)!(s+1/2)^q} (-1)^{p_1+\cdots+p_r-r}ds=0.
\end{align*}
Obviously, the integrand $F_{p_1p_2\cdots p_r,q}(x,x_1,x_2,\ldots,x_r;s)$ possesses the following poles in the complex plane: 1. All integer points are simple poles; 2. $s=-1/2$ is a pole of order $p_1+p_2+\cdots+p_r+q$; 3. $s=-n-1/2$ (where $n$ is a positive integer) is a pole of order $p_1+p_2+\cdots+p_r$. Applying Lemma \ref{lem-redisue-thm}, we have
\begin{align}\label{residue-sums-quad}
&\sum_{n=0}^\infty \Res\left(F_{p_1p_2\cdots p_r,q}(\cdot;s),n\right)+\sum_{n=1}^\infty \Res\left(F_{p_1p_2\cdots p_r,q}(\cdot;s),-n\right)\nonumber\\&\quad+\sum_{n=1}^\infty\Res\left(F_{p_1p_2\cdots p_r,q}(\cdot;s),-n-1/2\right)+\Res\left(F_{p_1p_2\cdots p_r,q}(\cdot;s),-1/2\right)=0.
\end{align}
At integer points, which are simple zeros, the residue values can be calculated using Lemmas \ref{lem-rui-xu-two} and \ref{lem-extend-rui-xu-one} as follows:
\begin{align*}
&\Res\left(F_{p_1p_2\cdots p_r,q}(\cdot;s),n\right)=\frac{x^{-n}(x_1\cdots x_r)^{-n-1}}{(n+1/2)^q}\prod\limits_{j=1}^r\left(\ti_{p_j}(x_j)-t_n(p_j;x_j)\right)\quad (n\in \N_0),\\
&\Res\left(F_{p_1p_2\cdots p_r,q}(\cdot;s),-n\right)=\frac{(xx_1\cdots x_r)^n}{(-n+1/2)^q}\prod\limits_{j=1}^r\left(\ti_{p_j}(x_j)x_j^{-1}+(-1)^{p_j}t_n\Big(p_j;x_j^{-1}\Big)\right)\quad (n\in \N).
\end{align*}
By expanding the two residue values above and then summing them, we obtain
\begin{align*}
&\sum_{n=0}^\infty \Res\left(F_{p_1p_2\cdots p_r,q}(\cdot;s),n\right)+\sum_{n=1}^\infty \Res\left(F_{p_1p_2\cdots p_r,q}(\cdot;s),-n\right)\\
&=x T_{p_1,p_2,\ldots,p_r;q}\Big(x_1,x_2,\cdots,x_r;(xx_1\cdots x_r)^{-1}\Big)\\&\quad+(-1)^{p_1+p_2+\cdots+p_r+q+r}T_{p_1,p_2,\ldots,p_r;q}\Big(x_1^{-1},x_2^{-1},\ldots,x_r^{-1};xx_1\cdots x_r\Big)\\
&\quad+\{\text{combinations of lower-order sums}\}.
\end{align*}
Applying Lemmas \ref{lem-rui-xu-one} and \ref{lem-extend-rui-xu-two}, we can also compute the latter two residue values in \eqref{residue-sums-quad}. However, the resulting sum obtained after summation will still be of order less than $r$, namely:
\begin{align*}
&\sum_{n=1}^\infty\Res\left(F_{p_1p_2\cdots p_r,q}(\cdot;s),-n-1/2\right)+\Res\left(F_{p_1p_2\cdots p_r,q}(\cdot;s),-1/2\right)\\
&\in \{\text{combinations of lower-order sums}\}.
\end{align*}
Finally, substituting these two conclusions into \eqref{residue-sums-quad} completes the proof of the theorem.
\end{proof}

\begin{exa}
As an example, considering $p_1=p_2=p_3=1$ in Theorem \ref{thm-parityc-CES-one}, we have
\begin{align*}
&xT_{1^3;q}\Big(x_1,x_2,x_3;(xx_1x_2x_3)^{-1}\Big)+(-1)^qT_{1^3;q}\Big(x_1^{-1},x_2^{-1},x_3^{-1};xx_1x_2x_3\Big)
\\=&x\ti_1(x_1)\ti_1(x_2)\ti_1(x_3)\ti_q\Big((xx_1x_2x_3)^{-1}\Big)\\&-x\ti_1(x_1)\ti_1(x_2)\Big(T_{1;q}\Big(x_3;(xx_1x_2x_3)^{-1}\Big)-\ti_{q+1}\Big((xx_1x_2)^{-1}\Big)\Big)
\\&-x\ti_1(x_1)\ti_1(x_3)\Big(T_{1;q}\Big(x_2;(xx_1x_2x_3)^{-1}\Big)-\ti_{q+1}\Big((xx_1x_3)^{-1}\Big)\Big)
\\&-x\ti_1(x_2)\ti_1(x_3)\Big(T_{1;q}\Big(x_1;(xx_1x_2x_3)^{-1}\Big)-\ti_{q+1}\Big((xx_2x_3)^{-1}\Big)\Big)
\\&+x\ti_1(x_1)\Big(T_{1,1;q}\Big(x_2,x_3;(xx_1x_2x_3)^{-1}\Big)-T_{1;q+1}\Big(x_2;(xx_1x_2)^{-1}\Big)\\&-T_{1;q+1}\Big(x_3;(xx_1x_3)^{-1}\Big)+\ti_{q+2}\Big((xx_1)^{-1}\Big)\Big)
\\&+x\ti_1(x_2)\Big(T_{1,1;q}\Big(x_1,x_3;(xx_1x_2x_3)^{-1}\Big)-T_{1;q+1}\Big(x_1;(xx_1x_2)^{-1}\Big)\\&-T_{1;q+1}\Big(x_3;(xx_2x_3)^{-1}\Big)+\ti_{q+2}\Big((xx_2)^{-1}\Big)\Big)
\\&+x\ti_1(x_3)\Big(T_{1,1;q}\Big(x_1,x_2;(xx_1x_2x_3)^{-1}\Big)-T_{1;q+1}\Big(x_1;(xx_1x_3)^{-1}\Big)\\&-T_{1;q+1}\Big(x_2;(xx_2x_3)^{-1}\Big)+\ti_{q+2}\Big((xx_3)^{-1}\Big)\Big)
\\&+xT_{1,1;q+1}\Big(x_1,x_2;(xx_1x_2)^{-1}\Big)+xT_{1,1;q+1}\Big(x_1,x_3;(xx_1x_3)^{-1}\Big)
\\&+xT_{1,1;q+1}\Big(x_2,x_3;(xx_2x_3)^{-1}\Big)-xT_{1;q+2}\Big(x_1;(xx_1)^{-1}\Big)\\&-xT_{1;q+2}\Big(x_2;(xx_2)^{-1}\Big)-xT_{1;q+2}\Big(x_3;(xx_3)^{-1}\Big)
\\&+(-1)^q(x_1x_2x_3)^{-1}\ti_1(x_1)\ti_1(x_2)\ti_1(x_3)\ti_q(xx_1x_2x_3)\\&-(-1)^q(x_1x_2)^{-1}\ti_1(x_1)\ti_1(x_2)T_{1;q}\Big(x_3^{-1};xx_1x_2x_3\Big)
\\&-(-1)^q(x_1x_3)^{-1}\ti_1(x_1)\ti_1(x_3)T_{1;q}\Big(x_2^{-1};xx_1x_2x_3\Big)\\&-(-1)^q(x_2x_3)^{-1}\ti_1(x_2)\ti_1(x_3)T_{1;q}\Big(x_1^{-1};xx_1x_2x_3\Big)
\\&+(-1)^qx_1^{-1}\ti_1(x_1)T_{1,1;q}\Big(x_2^{-1},x_3^{-1};xx_1x_2x_3\Big)\\&+(-1)^qx_2^{-1}\ti_1(x_2)T_{1,1;q}\Big(x_1^{-1},x_3^{-1};xx_1x_2x_3\Big)
\\&+(-1)^qx_3^{-1}\ti_1(x_3)T_{1,1;q}\Big(x_1^{-1},x_2^{-1};xx_1x_2x_3\Big)+(-1)^{q}\ti_{q+3}(x)\\&+(-1)^q\Li_{q}(xx_1x_2x_3)\Big(\ti_{3}(x)-x\ti_{3}\Big(x^{-1}\Big)\Big)
\\&+\frac{(-1)^qq(q+1)}{2}\Li_{q+2}(xx_1x_2x_3)\Big(\ti_{1}(x)-x\ti_{1}\Big(x^{-1}\Big)\Big)\\&-(-1)^qq\Li_{q+1}(xx_1x_2x_3)\Big(\ti_{2}(x)+x\ti_{2}\Big(x^{-1}\Big)\Big)
\\&+(-1)^qq\Big(\ti_{1}(x)-x\ti_{1}\Big(x^{-1}\Big)\Big)\Big\{\Li_1(x_1)\Li_{q+1}(xx_1x_2x_3)+\Li_1(x_2)\Li_{q+1}(xx_1x_2x_3)
\\&\qquad\qquad\qquad\qquad\qquad\qquad+\Li_1(x_3)\Li_{q+1}(xx_1x_2x_3)-S_{1;q+1}\Big(x_1^{-1};xx_1x_2x_3\Big)\\&\qquad\qquad\qquad\qquad\qquad\qquad-S_{1;q+1}\Big(x_2^{-1};xx_1x_2x_3\Big)-S_{1;q+1}\Big(x_3^{-1};xx_1x_2x_3\Big)\Big\}
\\&-(-1)^q\Big(\ti_{1}(x)-x\ti_{1}\Big(x^{-1}\Big)\Big)\Big\{\Li_2(x_1)\Li_{q}(xx_1x_2x_3)+\Li_2(x_2)\Li_{q}(xx_1x_2x_3)
\\&\qquad\qquad\qquad\qquad\qquad\qquad+\Li_2(x_3)\Li_{q}(xx_1x_2x_3)+S_{2;q}\Big(x_1^{-1};xx_1x_2x_3\Big)\\&\qquad\qquad\qquad\qquad\qquad\qquad+S_{2;q}\Big(x_2^{-1};xx_1x_2x_3\Big)+S_{2;q}\Big(x_3^{-1};xx_1x_2x_3\Big)\Big\}
\\&-(-1)^q\Big(\ti_{2}(x)+x\ti_{2}\Big(x^{-1}\Big)\Big)\Big\{\Li_1(x_1)\Li_{q}(xx_1x_2x_3)+\Li_1(x_2)\Li_{q}(xx_1x_2x_3)
\\&\qquad\qquad\qquad\qquad\qquad\qquad+\Li_1(x_3)\Li_{q}(xx_1x_2x_3)-S_{1;q}\Big(x_1^{-1};xx_1x_2x_3\Big)\\&\qquad\qquad\qquad\qquad\qquad\qquad-S_{1;q}\Big(x_2^{-1};xx_1x_2x_3\Big)-S_{1;q}\Big(x_3^{-1};xx_1x_2x_3\Big)\Big\}
\\&+(-1)^q\Big(\ti_{1}(x)-x\ti_{1}\Big(x^{-1}\Big)\Big)\Big\{\Li_1(x_1)\Li_1(x_2)\Li_q(xx_1x_2x_3)+\Li_1(x_1)\Li_1(x_3)\Li_q(xx_1x_2x_3)
\\&\qquad\qquad\qquad+\Li_1(x_2)\Li_1(x_3)\Li_q(xx_1x_2x_3)-\Li_1(x_1)S_{1;q}\Big(x_2^{-1},xx_1x_2x_3\Big)
\\&\qquad\qquad\qquad-\Li_1(x_1)S_{1;q}\Big(x_3^{-1},xx_1x_2x_3\Big)-\Li_1(x_2)S_{1;q}\Big(x_2^{-1},xx_1x_2x_3\Big)
\\&\qquad\qquad\qquad-\Li_1(x_2)S_{1;q}\Big(x_3^{-1},xx_1x_2x_3\Big)-\Li_1(x_3)S_{1;q}\Big(x_1^{-1},xx_1x_2x_3\Big)\\&\qquad\qquad\qquad-\Li_1(x_3)S_{1;q}\Big(x_2^{-1},xx_1x_2x_3\Big)+S_{1,1;q}\Big(x_1^{-1},x_2^{-1};xx_1x_2x_3\Big)
\\&\qquad\qquad\qquad+S_{1,1;q}\Big(x_1^{-1},x_3^{-1};xx_1x_2x_3\Big)++S_{1,1;q}\Big(x_2^{-1},x_3^{-1};xx_1x_2x_3\Big)\Big\}
\\&+\sum\limits_{m+k+1=q-2\atop m,k\geq0}(-1)^k\Big(\Li_{k+1}(x_1)+\Li_{k+1}(x_2)+\Li_{k+1}(x_3)\Big)\Big((-1)^m\ti_{m+1}(x)-x\ti_{m+1}\Big(x^{-1}\Big)\Big)
\\&+\sum\limits_{m+k_1+k_2+k_3=q-1\atop m,k_1,k_2,k_3\geq0}(-1)^{k_1+k_2+k_3}\Li_{k_1+1}(x_1)\Li_{k_2+1}(x_2)\Li_{k_3+1}(x_3)\Big((-1)^m\ti_{m+1}(x)-x\ti_{m+1}\Big(x^{-1}\Big)\Big)
\\&+\sum\limits_{m+k_1+k_2=q\atop m,k_1,k_2\geq0}(-1)^{k_1+k_2}\Big(\Li_{k_1+1}(x_1)\Li_{k_2+1}(x_2)+\Li_{k_1+1}(x_1)\Li_{k_2+1}(x_3)+\Li_{k_1+1}(x_3)\Li_{k_2+1}(x_2)\Big)
\\&\qquad\qquad\qquad\qquad\qquad\qquad\qquad\qquad\qquad\qquad\qquad\times\Big((-1)^m\ti_{m+1}(x)-x\ti_{m+1}\Big(x^{-1}\Big)\Big).
\end{align*}
\end{exa}
\begin{re} Theorem \ref{thm-parityc-theorem} is obtained by replacing $x$ with $(xx_1\cdots x_r)^{-1}$ in Theorem \ref{thm-parityc-CES-one}.
Indeed, the method of contour integration is entirely capable of yielding an explicit formula for Theorem \ref{thm-parityc-CES-one}, although given the complexity of the resulting expression, we have refrained from computing it in explicit detail. In \cite{CharltonHoffman-MathZ2025}, Charlton and Hoffman established the symmetry theorem for regularized multiple $t$-values and more general results, while the parity results for multiple $t$-values can also be derived from his paper by utilizing stuffle relations (\cite{H2000}).
\end{re}

\section{Further Remark}
In fact, we can consider contour integrals of the form
\begin{align}\label{contourintefinallyallrat}
\lim_{R\rightarrow \infty}\oint_{C_R} \frac{\Phi(s;x)\phi^{(p_1-1)}(s+1/2;x_1)\cdots\phi^{(p_r-1)}(s+1/2;x_r)}{(p_1-1)!\cdots (p_r-1)!}r_j(s)(-1)^{p_1+\cdots+p_r-r} ds=0\quad (j=1,2),
\end{align}
to study the parity of other types of cyclotomic Euler $T$-sums. Here $r_1(s)$ is a rational function in $s$ that has no poles at $-(n+1/2)\ (n\in \N)$ and $n\ (n\in \Z)$, while $r_2(s)$ is a rational function in $s$ that has no poles at $-(n+1/2)\ (n\in \N_0)$ and $n\ (n\in \Z\setminus \{0\})$, and $r_1(s),r_2(s)$ are $o(1)$ at infinity. Denote by $S_1$ and $S_2$ the set of poles of $r_1(s)$ and $r_2(s)$ respectively. For example, by examining the linear cases
\begin{align*}
\lim_{R\rightarrow \infty}\oint_{C_R} \frac{\Phi(s;x)\phi^{(p-1)}(s+1/2;y)}{(p-1)!}r_j(s)(-1)^{p-1} ds=0
\end{align*}
we can derive the following two general formulas:
\begin{thm}\label{thmsecfinall-one}
Let $x,y$ be roots of unity and $p_1\geq 1$. Assuming that $r_1(s)$ has a pole at $-1/2$ of order $q_1\geq0$ and $r_2(s)$ has a pole at $0$ of order $q_2\geq0$, then we have
\begin{align}\label{eq:linear-r1}
&-\sum\limits_{\alpha\in S_1\setminus\{-1/2\}}\Res\left(\frac{\Phi(s;x)\phi^{(p_1-1)}(s+1/2;y)}{(p_1-1)!}r_1(s)(-1)^{p_1-1} ,\alpha\right)\notag\\
&=\sum\limits_{n=0}^\infty x^{-n}y^{-n-1}r_1(n)\left(\ti_{p_1}(y)-t_n(p_1;y)\right)+\sum\limits_{n=1}^\infty (xy)^nr_1(-n)\left(\ti_{p_1}(y)y^{-1}+(-1)^{p_1} t_n\Big(p_1;y^{-1}\Big)\right)\notag\\
&\quad+\sum\limits_{m=0}^{p_1+q_1-1}\left((-1)^m \ti_{m+1}(x)-x\ti_{m+1}\Big(x^{-1}\Big) \right)\frac{R_1^{(p_1+q_1-m-1)}\left(-\frac{1}{2}\right)}{(p_1+q_1-1-m)!}\notag\\
&\quad+\sum_{m+k\leq q_1-1\atop q_1\geq1,m,k\geq 0} (-1)^k\binom{k+p_1-1}{p_1-1}\Li_{k+p_1}(y)\left((-1)^m \ti_{m+1}(x)-x\ti_{m+1}\Big(x^{-1}\Big) \right)\frac{R_1^{(q_1-m-k-1)}\left(-\frac{1}{2}\right)}{(q_1-m-k-1)!}\notag\\
&\quad+\sum\limits_{n=1}^\infty \sum_{m=0}^{p_1-1} \left((-1)^m \ti_{m+1}(x)-x \ti_{m+1}\Big(x^{-1}\Big) \right) \frac{(xy)^nr_1^{(p_1-m-1)}\left(-n-\frac{1}{2}\right)}{(p_1-m-1)!}
\end{align}
and
\begin{align}\label{eq:linear-r2}
&-\sum\limits_{\alpha\in S_2\setminus\{0\}}\Res\left(\frac{\Phi(s;x)\phi^{(p_1-1)}(s+1/2;y)}{(p_1-1)!}r_2(s)(-1)^{p_1-1} ,\alpha\right)\notag\\
&=\sum\limits_{n=1}^\infty x^{-n}y^{-n-1}r_2(n)\left(\ti_{p_1}(y)-t_n(p_1;y)\right)+\sum\limits_{n=1}^\infty (xy)^nr_2(-n)\left(\ti_{p_1}(y)y^{-1}+(-1)^{p_1} t_n\Big(p_1;y^{-1}\Big)\right)\notag\\
&\quad+\sum\limits_{k=0}^{q_2}\binom{k+p_1-1}{p_1-1}(-1)^k\ti_{k+p_1}(y)y^{-1}\frac{R_2^{(q_2-k)}(0)}{(q_2-k)!}\notag\\
&\quad+\sum_{m+k\leq q_2-1\atop q_2\geq1,m,k\geq 0} (-1)^k\binom{k+p_1-1}{p_1-1}\ti_{k+p_1}(y)y^{-1}\left((-1)^m \Li_{m+1}(x)-\Li_{m+1}\Big(x^{-1}\Big) \right)\frac{R_2^{(q_2-m-k-1)}(0)}{(q_2-m-k-1)!}\notag\\
&\quad+\sum\limits_{n=0}^\infty \sum_{m=0}^{p_1-1} \left((-1)^m \ti_{m+1}(x)-x \ti_{m+1}\Big(x^{-1}\Big) \right) \frac{(xy)^nr_2^{(p_1-m-1)}\left(-n-\frac{1}{2}\right)}{(p_1-m-1)!}
\end{align}
where $R_1(s)=(s+1/2)^{q_1}r_1(s)$ and $R_2(s)=s^{q_2}r_2(s)$.
\end{thm}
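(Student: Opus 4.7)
The plan is to apply the residue lemma (Lemma \ref{lem-redisue-thm}) directly to the integrand
\[F_j(s):=\frac{\Phi(s;x)\,\phi^{(p_1-1)}(s+1/2;y)}{(p_1-1)!}\,r_j(s)\,(-1)^{p_1-1}\qquad(j=1,2).\]
As noted at the end of Section~2, the product $\Phi(s;x)\phi^{(p_1-1)}(s+1/2;y)$ is $O(|s|^{\varepsilon})$ on the circles $|s|=n+1/2$, and $r_j(s)=o(1)$ at infinity; hence the contour integral over $C_R$ vanishes as $R\to\infty$, and consequently the sum of all residues of $F_j$ is zero.

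For \eqref{eq:linear-r1}, I would first catalogue the poles of $F_1$: the integers $n\in\Z$ are simple poles of $\Phi$ alone (by hypothesis, $r_1$ and $\phi^{(p_1-1)}(s+1/2;y)$ are regular there); the point $s=-1/2$ is a combined pole of order $p_1+q_1$ (with $\phi^{(p_1-1)}(s+1/2;y)$ contributing order $p_1$ and $r_1$ contributing order $q_1$); the points $s=-n-1/2$ with $n\geq 1$ are poles of order $p_1$ coming only from $\phi^{(p_1-1)}(s+1/2;y)$; and the remaining poles lie in $S_1\setminus\{-1/2\}$. At each integer point the residue equals the residue of $\Phi$ times the regular values of the other two factors, which I would read off using Lemma \ref{lem-rui-xu-two} together with Lemma \ref{lem-extend-rui-xu-one} (the $|s+n|<1$ formula near $s=-n$, $n\geq 0$, and the $|s-n|<1$ formula near $s=n$, $n\geq 1$); this directly produces the first two infinite series in \eqref{eq:linear-r1}.

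The main calculation is the residue at $s=-1/2$. Writing $u=s+1/2$ and $r_1(s)=R_1(s)/u^{q_1}$ with $R_1$ regular, I would expand $\Phi(s;x)$ by Lemma \ref{lem-extend-rui-xu-two} at $n=0$ and expand $(-1)^{p_1-1}\phi^{(p_1-1)}(s+1/2;y)/(p_1-1)!$ by Lemma \ref{lem-rui-xu-one} at $n=0$, which splits cleanly into a Taylor part $\sum_{k\geq 0}\binom{k+p_1-1}{p_1-1}(-1)^k\Li_{k+p_1}(y)u^k$ and a pure polar part $u^{-p_1}$. Expanding $R_1$ in Taylor form around $-1/2$ and extracting the coefficient of $u^{-1}$ in the triple product produces exactly two contributions: the polar factor $u^{-p_1}$ paired with the Taylor series of $\Phi(s;x)R_1(s)$ gives the single sum in $R_1^{(p_1+q_1-1-m)}(-1/2)$, while the Taylor part paired with $R_1(s)/u^{q_1}$ and $\Phi(s;x)$ gives the double sum in $\Li_{k+p_1}(y)$ and $R_1^{(q_1-m-k-1)}(-1/2)$; the index ranges $m\leq p_1+q_1-1$ and $m+k\leq q_1-1$ arise from nonnegativity of the Taylor exponents. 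The residues at $s=-n-1/2$ with $n\geq 1$ are computed identically, but with $r_1$ already regular so that only the $u^{-p_1}$ factor contributes, yielding the final series of \eqref{eq:linear-r1}. Summing all residues and moving the $S_1\setminus\{-1/2\}$ contribution to the left proves \eqref{eq:linear-r1}.

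The proof of \eqref{eq:linear-r2} follows the same script with the roles of $-1/2$ and $0$ interchanged: $s=0$ is a combined pole of order $q_2+1$ coming from the simple pole of $\Phi$ and the order-$q_2$ pole of $r_2$, while $s=-1/2$ reverts to a plain half-integer pole of order $p_1$ since $r_2$ is regular there. I would use Lemma \ref{lem-rui-xu-two} at $n=0$ to expand $\Phi$ and Lemma \ref{lem-extend-rui-xu-one} at $n=0$ to expand $\phi^{(p_1-1)}(s+1/2;y)$ (which is regular at $s=0$), then extract the coefficient of $s^{-1}$ in the product with $R_2(s)/s^{q_2}$: the principal part $1/s$ of $\Phi$ generates the single sum in $R_2^{(q_2-k)}(0)$, and the regular part of $\Phi$ generates the double sum. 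The residues at the remaining integers and at all half-integers $-n-1/2$, $n\geq 0$, are computed in direct parallel to the first case. The main obstacle throughout will be clerical rather than conceptual: one must carefully track the index ranges in the Cauchy products of three Laurent/Taylor series so that they match the summation conventions stated in \eqref{eq:linear-r1}--\eqref{eq:linear-r2}.
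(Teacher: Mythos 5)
Your proposal follows exactly the paper's own argument: the same contour integral and residue lemma, the same catalogue of poles (simple poles at integers, the combined pole at $-1/2$ resp.\ $0$, the order-$p_1$ poles at the remaining half-integers, plus $S_j$), and the same Laurent/Taylor expansions from Lemmas \ref{lem-rui-xu-one}--\ref{lem-extend-rui-xu-two} to extract each residue as a Cauchy-product coefficient. This matches the published proof in both structure and detail, so no further comparison is needed.
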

\begin{proof}
First, considering the residue calculations of the following contour integral:
\begin{align*}
\lim_{R\rightarrow \infty}\oint_{C_R} F_{p_1,q_1}(x,y;s)ds:= \lim_{R\rightarrow \infty}\oint_{C_R} \frac{\Phi(s;x)\phi^{(p_1-1)}(s+1/2;y)}{(p_1-1)!}r_1(s) (-1)^{p_1-1}ds=0.
\end{align*}
The integrand $F_{p_1,q_1}(x,y;s)$ has the following poles throughout the complex plane: 1. All integers (simple poles); 2. $-1/2$ (pole of order $p_1+q_1)$ and 3. $-(n+1/2)$ (for positive integer $n$, poles of order $p_1$). Applying Lemmas \ref{lem-rui-xu-one}-\ref{lem-extend-rui-xu-two}, by direct calculations, we deduce the following residues
\begin{align*}
&\Res\left(F_{p_1,q_1}(x,y;s),n\right)=x^{-n}y^{-n-1}r_1(n)\left(\ti_{p_1}(y)-t_n(p_1;y)\right)\quad (n\geq 0),\\
&\Res\left(F_{p_1,q_1}(x,y;s),-n\right)=(xy)^nr_1(-n)\left(\ti_{p_1}(y)y^{-1}+(-1)^{p_1} t_n\Big(p_1;y^{-1}\Big)\right)\quad (n\geq 1),\\
&\Res\left(F_{p_1,q_1}(x,y;s),-n-1/2\right)=\frac1{(p_1-1)!} \lim_{s\rightarrow -n-1/2} \frac{d^{p_1-1}}{ds^{p_1-1}}\left((s+n+1/2)^{p_1}F_{p_1,q_1}(x,y;s)\right)\\
&=(xy)^n\sum_{m=0}^{p_1-1} \left((-1)^m \ti_{m+1}(x)-x \ti_{m+1}\Big(x^{-1}\Big) \right) \frac{r_1^{(p_1-m-1)}\left(-n-\frac{1}{2}\right)}{(p_1-m-1)!}\quad (n\geq 1)
\end{align*}
and
\begin{align*}
&\Res\left(F_{p_1,q_1}(x,y;s),-1/2\right)=\frac1{(p_1+q_1-1)!} \lim_{s\rightarrow -1/2} \frac{d^{p_1+q_1-1}}{ds^{p_1+q_1-1}}\left((s+1/2)^{p_1+q_1}F_{p_1,q_1}(x,y;s)\right)\\
&=\sum\limits_{m=0}^{p_1+q_1-1}\left((-1)^m \ti_{m+1}(x)-x\ti_{m+1}\Big(x^{-1}\Big) \right)\frac{R_1^{(p_1+q_1-m-1)}\left(-\frac{1}{2}\right)}{(p_1+q_1-1-m)!}\\
&\quad+\sum_{m+k\leq q_1-1\atop q_1\geq1,m,k\geq 0} (-1)^k\binom{k+p_1-1}{p_1-1}\Li_{k+p_1}(y)\left((-1)^m \ti_{m+1}(x)-x\ti_{m+1}\Big(x^{-1}\Big) \right)\frac{R_1^{(q_1-m-k-1)}\left(-\frac{1}{2}\right)}{(q_1-m-k-1)!}.
\end{align*}

From Lemma \ref{lem-redisue-thm}, we know that
\begin{align*}
&\Res\left(F_{p_1,q_1}(x,y;s),-1/2\right)+\sum_{n=1}^\infty \Res\left(F_{p_1,q_1}(x,y;s),-n-1/2\right) +\sum_{n=0}^\infty \Res\left(F_{p_1,q_1}(x,y;s),n\right) \\
&\quad+\sum_{n=1}^\infty \Res\left(F_{p_1,q_1}(x,y;s),-n\right)+\sum\limits_{\alpha\in S_1\setminus\{-1/2\}}\Res(F_{p_1,q_1}(x,y;s),\alpha)=0.
\end{align*}
Finally, combining these contributions yields the result \eqref{eq:linear-r1}.

Considering the residue calculations of the following contour integral:
\begin{align*}
\lim_{R\rightarrow \infty}\oint_{C_R} F_{p_1,q_2}(x,y;s)ds:= \lim_{R\rightarrow \infty}\oint_{C_R} \frac{\Phi(s;x)\phi^{(p_1-1)}(s+1/2;y)}{(p_1-1)!}r_2(s) (-1)^{p_1-1}ds=0.
\end{align*}
The integrand $F_{p_1,q_2}(x,y;s)$ has the following poles throughout the complex plane: 1. All nonzero integers (simple poles); 2. $0$ (pole of order $q_2+1$)and 3. $-(n+1/2)$ (for nonnegative integer $n$, poles of order $p_1$). Applying Lemmas \ref{lem-rui-xu-one}-\ref{lem-extend-rui-xu-two}, by direct calculations, we deduce the following residues
\begin{align*}
&\Res\left(F_{p_1,q_2}(x,y;s),n\right)=x^{-n}y^{-n-1}r_2(n)\left(\ti_{p_1}(y)-t_n(p_1;y)\right)\quad (n\geq 1),\\
&\Res\left(F_{p_1,q_2}(x,y;s),-n\right)=(xy)^nr_2(-n)\left(\ti_{p_1}(y)y^{-1}+(-1)^{p_1} t_n\Big(p_1;y^{-1}\Big)\right)\quad (n\geq 1),\\
&\Res\left(F_{p_1,q_2}(x,y;s),-n-1/2\right)=\frac1{(p_1-1)!} \lim_{s\rightarrow -n-1/2} \frac{d^{p_1-1}}{ds^{p_1-1}}\left((s+n+1/2)^{p_1}F_{p_1,q_2}(x,y;s)\right)\\
&=(xy)^n\sum_{m=0}^{p_1-1} \left((-1)^m \ti_{m+1}(x)-x \ti_{m+1}\Big(x^{-1}\Big) \right) \frac{r_2^{(p_1-m-1)}\left(-n-\frac{1}{2}\right)}{(p_1-m-1)!}\quad (n\geq 0)
\end{align*}
and
\begin{align*}
&\Res\left(F_{p_1,q_2}(x,y;s),0\right)=\frac1{q_2!} \lim_{s\rightarrow 0} \frac{d^{q_2}}{ds^{q_2}}\left(s^{q_2+1}F_{p_1,q_2}(x,y;s)\right)\\
&=\sum\limits_{k=0}^{q_2}\binom{k+p_1-1}{p_1-1}(-1)^k\ti_{k+p_1}(y)y^{-1}\frac{R_2^{(q_2-k)}(0)}{(q_2-k)!}\\
&\quad+\sum_{m+k\leq q_2-1\atop q_2\geq1,m,k\geq 0} (-1)^k\binom{k+p_1-1}{p_1-1}\ti_{k+p_1}(y)y^{-1}\left((-1)^m \Li_{m+1}(x)-\Li_{m+1}\Big(x^{-1}\Big) \right)\frac{R_2^{(q_2-m-k-1)}(0)}{(q_2-m-k-1)!}.
\end{align*}
Similarly, combining these contributions and Lemma \ref{lem-redisue-thm} yields the statement of \eqref{eq:linear-r2}.
\end{proof}

Obviously, by setting $r_1(s) = (s + 1/2)^{-q}$ in equation \eqref{eq:linear-r1} of Theorem \ref{thmsecfinall-one}, a direct residue computation yields Theorem \ref{thm-linearETS}.

The quadratic cases can be derived by evaluating the contour integral
\begin{align}\label{quadraticCMTV-ES}
\lim_{R\rightarrow \infty}\oint_{C_R} \frac{\Phi(s;x)\phi^{(p_1-1)}(s+1/2;x_1)\phi^{(p_2-1)}(s+1/2;x_2)}{(p_1-1)! (p_2-1)!}r_j(s)(-1)^{p_1+p_2} ds=0.
\end{align}
We leave the details of this calculation to interested readers.

When $r_2(s):=1/s^q\ (q\in\N)$, the contour integral \eqref{contourintefinallyallrat} can be utilized to study the parity of the cyclotomic version of equation \eqref{Stsum.Unify}, which is related to the cyclotomic version of Kaneko-Tsumura's multiple $T$-values. We define the cyclotomic version of \eqref{Stsum.Unify} $\tilde{S}_{p_1,\ldots, p_r;q}(x_1,\ldots,x_r;x)$ and the cyclotomic version of Kaneko-Tsumura's multiple $T$-values $\Ti_{k_1,\ldots,k_r}(x_1,\ldots,x_r)$ as follows:
\begin{align}
&\tilde{S}_{p_1,\ldots, p_k;q}(x_1,\ldots,x_r;x):=\sum_{n=1}^\infty \frac{t_{n}(p_1;x_1)t_{n}(p_2;x_2)\cdots t_{n}(p_r;x_r)}{n^q}x^n,\label{defnS-ES2}
\end{align}
and
\begin{align}\label{defn-CMTV}
\Ti_{k_1,\ldots,k_r}(x_1,\ldots,x_r):=2^r\sum_{0<n_1<n_2<\cdots<n_r} \frac{x_1^{n_1}x_2^{n_2}\cdots x_r^{n_r}}{(2n_1-1)^{k_1}(2n_2-2)^{k_2}\cdots (2n_r-r)^{k_r}}
\end{align}
where $p_1,\ldots,p_r,q\in \N,\ k_1,\ldots,k_r\in \N$ and $x_1,\ldots,x_r,x$ are all roots of unity with $(q,x)\neq (1,1)$ and $(k_r,x_r)\neq (1,1)$. In particular, if $x_1,\ldots,x_r$ in \eqref{defn-CMTV} are all $N$-th roots of unity, they are referred to as \emph{level $N$ cyclotomic multiple $T$-values}. Clearly, the multiple series on the right hand side of \eqref{defn-CMTV} also converges for $|x_j\cdots x_r|<1\ (j=1,2,\ldots,r)$, in which case we call the series a \emph{multiple $T$-polylogarithm function}. From the definitions of both, the following relationships can be readily obtained:
\begin{align*}
&\Ti_{p,q}(x,y)=\frac{y}{2^{p+q-2}}\tilde{S}_{p;q}(x;y),\\
&\Ti_{p,q,r}(x,y,z)=\frac{yz}{2^{p+q+r-3}}\ti_r(z)\tilde{S}_{p;q}(x;y)-\frac{yz}{2^{p+q+r-3}}\tilde{S}_{p,r;q}(x,z;y).
\end{align*}
When $r_2(s) = s^{-q}$ in \eqref{eq:linear-r2}, the resulting outcome can then be used to investigate the parity results of cyclotomic double $T$-values. Similarly, by considering $r_2(s) = s^{-q}$ in equation \eqref{quadraticCMTV-ES}, the results obtained through the computation of its residue values can be used to study the parity of cyclotomic triple $T$-values. We leave the detailed process to interested readers. Thus, we are able to provide the following statements regarding the parity of cyclotomic multiple $T$-values at depths two and three:
\begin{cor} Let $x,y$ be roots of unity, and $p,q\geq 1$ with $(p,x), (q,y)\neq (1,1)$. Then
\begin{align*}
\Ti_{p,q}(x,y)-(-1)^{p+q}xy^2\Ti_{p,q}\Big(x^{-1},y^{-1}\Big)
\end{align*}
can be expressed in terms of a rational combination of products of cyclotomic single $T$-values and cyclotomic single zeta values.
\end{cor}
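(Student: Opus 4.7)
The plan is to specialize the second general formula \eqref{eq:linear-r2} of Theorem \ref{thmsecfinall-one} to the rational function $r_2(s)=1/s^q$. This is a legitimate choice: it vanishes at infinity, has no poles at the half-integers $-(n+1/2)$ with $n\geq 0$ or at any nonzero integer, and has a unique pole of order $q_2=q$ at the origin. Because $R_2(s)=s^q r_2(s)\equiv 1$, only the terms with $q_2-k=0$ (or $q_2-m-k-1=0$) survive in the pole-at-zero contribution, which dramatically simplifies that part of the formula; and since $S_2\setminus\{0\}=\emptyset$, the left-hand side of \eqref{eq:linear-r2} vanishes identically.

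Next I would read off the $\tilde S$-type contributions. The integer residues combine into
\begin{align*}
y^{-1}\ti_p(y)\Li_q\bigl((xy)^{-1}\bigr)-y^{-1}\tilde{S}_{p;q}\bigl(y;(xy)^{-1}\bigr)+(-1)^q y^{-1}\ti_p(y)\Li_q(xy)+(-1)^{p+q}\tilde{S}_{p;q}\bigl(y^{-1};xy\bigr),
\end{align*}
while the contribution at $s=0$ reduces to $(-1)^q\binom{q+p-1}{p-1}y^{-1}\ti_{q+p}(y)$ plus a finite sum of single-$\ti$ times single-$\Li$ products, and the sum of residues at $s=-n-1/2$ for $n\geq 0$ collapses, via $r_2^{(j)}(-n-1/2)=(-1)^q(q+j-1)!/(q-1)!\cdot(n+1/2)^{-q-j}$ and the elementary identity $\sum_{n\geq 0}(xy)^n/(n+1/2)^K=(xy)^{-1}\ti_K(xy)$, into products of two single $\ti$-values. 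Setting the total residue sum to zero and rearranging produces a $\Q$-linear identity of the shape
\begin{align*}
y^{-1}\tilde{S}_{p;q}\bigl(y;(xy)^{-1}\bigr)-(-1)^{p+q}\tilde{S}_{p;q}\bigl(y^{-1};xy\bigr)=\mathcal{P}(x,y),
\end{align*}
where $\mathcal{P}(x,y)$ is a $\Q$-linear combination of products of single cyclotomic $\ti$-values and single cyclotomic polylogarithms.

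Finally I would substitute $y=A$, $x=(AB)^{-1}$ so that $(xy)^{-1}=B$ and $xy=B^{-1}$, multiply through by $A$, apply the identity $\Ti_{p,q}(a,b)=(b/2^{p+q-2})\tilde{S}_{p;q}(a;b)$ recorded in Section 6, and convert single $\ti$-values into single $\Ti$-values via $\Ti_k(u)=2^{1-k}\ti_k(u)$. After renaming $(A,B)$ back to $(x,y)$ and clearing the factor $2^{p+q-2}/y$, the outcome is
\begin{align*}
\Ti_{p,q}(x,y)-(-1)^{p+q}xy^2\,\Ti_{p,q}\bigl(x^{-1},y^{-1}\bigr)=\text{rational combination of single }\Ti_k\text{ times single cyclotomic }\Li_k,
\end{align*}
which is exactly the claim. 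The only real bookkeeping obstacle is the residue at $s=-n-1/2$: iterating the derivatives of $1/s^q$ generates Pochhammer-type factors and alternating signs that must be re-summed cleanly into the $(xy)^{-1}\ti_K(xy)$ form; everything else is a direct specialization of Theorem \ref{thmsecfinall-one}.
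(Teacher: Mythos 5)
Your proposal is correct and follows exactly the route the paper intends (and only sketches): specialize \eqref{eq:linear-r2} of Theorem \ref{thmsecfinall-one} to $r_2(s)=s^{-q}$ to obtain a parity identity for the linear sums $\tilde S_{p;q}$, then transfer it to $\Ti_{p,q}$ via $\Ti_{p,q}(a,b)=\tfrac{b}{2^{p+q-2}}\tilde S_{p;q}(a;b)$ and the substitution $y\mapsto A$, $x\mapsto (AB)^{-1}$. Your residue bookkeeping (the collapse of the pole at $0$ since $R_2\equiv 1$, and the resummation of the half-integer residues into $(xy)^{-1}\ti_K(xy)$) checks out, so the argument fills in precisely the details the authors leave to the reader.
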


\begin{cor} Let $x,y,z$ be roots of unity, and $p,q\geq 1$ with $(p,x), (q,y), (r,z)\neq (1,1)$. Then
\begin{align*}
\Ti_{p,q,r}(x,y,z)+(-1)^{p+q+r}xy^2z^3\Ti_{p,q,r}\Big(x^{-1},y^{-1},z^{-1}\Big)
\end{align*}
can be expressed in terms of a rational combination of products of cyclotomic multiple $T$-values and cyclotomic multiple zeta values with depth $\leq 2$.
\end{cor}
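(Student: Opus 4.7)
The plan is to deduce the corollary from a parity identity for the cyclotomic quadratic Euler $\tilde S$-sum $\tilde S_{p,r;q}(x,z;y)$ defined in \eqref{defnS-ES2}, and then to convert that identity into one for $\Ti_{p,q,r}$ via the explicit relation
\begin{equation*}
\Ti_{p,q,r}(x,y,z)=\frac{yz}{2^{p+q+r-3}}\,\ti_r(z)\,\tilde S_{p;q}(x;y)-\frac{yz}{2^{p+q+r-3}}\,\tilde S_{p,r;q}(x,z;y)
\end{equation*}
recorded earlier in this section. Following the prescription preceding this corollary, I would apply the contour-integral framework \eqref{quadraticCMTV-ES} with rational factor $r_2(s)=1/s^q$, i.e.\ consider
\begin{equation*}
\oint\limits_{(\infty)}\frac{\Phi(s;b)\,\phi^{(p-1)}(s+1/2;a)\,\phi^{(r-1)}(s+1/2;c)}{(p-1)!\,(r-1)!\,s^q}\,(-1)^{p+r}\,ds=0,
\end{equation*}
whose integrand has simple poles at every nonzero integer (from $\Phi$), a pole of order $q+1$ at $s=0$, and poles of order $p+r$ at the half-integers $s=-n-1/2$ for $n\ge 0$.

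First I would compute the residues at the simple integer poles using Lemmas \ref{lem-rui-xu-two} and \ref{lem-extend-rui-xu-one}. Summing the contributions from $s=\pm n$ with $n\ge 1$ and isolating the deepest summand (the one containing $t_n(p;\cdot)t_n(r;\cdot)$) yields, modulo depth-$\le 2$ products such as $\ti_p(a)\,\tilde S_{r;q}(c;\cdot)$ and $\ti_r(c)\,\tilde S_{p;q}(a;\cdot)$, the combination
\begin{equation*}
\frac{1}{ac}\,\tilde S_{p,r;q}\bigl(a,c;(abc)^{-1}\bigr)+(-1)^{p+q+r}\,\tilde S_{p,r;q}\bigl(a^{-1},c^{-1};abc\bigr).
\end{equation*}
Specializing $a=x$, $c=z$, $b=(xyz)^{-1}$, so that $(abc)^{-1}=y$ and $abc=y^{-1}$, substituting the displayed $\Ti/\tilde S$ relation on each summand (together with its depth-two companion $\Ti_{p,q}(x,y)=\frac{y}{2^{p+q-2}}\tilde S_{p;q}(x;y)$), and rescaling by $-xyz^2/2^{p+q+r-3}$ converts this directly into
\begin{equation*}
\Ti_{p,q,r}(x,y,z)+(-1)^{p+q+r}xy^2z^3\,\Ti_{p,q,r}\bigl(x^{-1},y^{-1},z^{-1}\bigr)\equiv 0\pmod{\text{depth}\le 2}.
\end{equation*}

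What remains is to verify that the residues at $s=0$ and at $s=-n-1/2$ also lie in the depth-$\le 2$ envelope. At $s=0$ the Laurent expansion of $\Phi(s;b)/s^q$ (Lemma \ref{lem-rui-xu-two}) combines with the Taylor expansions of the two $\phi$-factors at $s=0$ to produce a polynomial in the depth-one data $\ti_{k+p}(a)$, $\ti_{k+r}(c)$ and $\Li_{m+1}(b^{\pm 1})$. The subtler point is the residue at $s=-n-1/2$: writing each $\phi$-factor as a single principal part $(s+n+1/2)^{-p}$, resp.\ $(s+n+1/2)^{-r}$, plus a Taylor remainder $R_a$, resp.\ $R_c$ (via Lemma \ref{lem-rui-xu-one}), the singular coefficients of their product each contain only one $\Li$- or $\zeta_n$-factor of $a$ or of $c$; the genuinely regular cross term $R_a\cdot R_c$, which would otherwise produce depth-$3$ sums $\zeta_n(\cdot;a^{-1})\zeta_n(\cdot;c^{-1})$, does not feed into the residue at all, since it is multiplied only by the Taylor-regular $\Phi(s;b)/s^q$.

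The main obstacle I anticipate is exactly this last bookkeeping step: extracting the Laurent coefficients at $s=-n-1/2$, summing the resulting mixed series in $n$ (which have the shape $\sum_n(xyz)^n\,\zeta_n(\cdot;a^{-1})/(n+1/2)^{\bullet}$ and their $\Li$-counterparts), and identifying each outcome as a rational combination of depth-$\le 2$ cyclotomic multiple $T$-values and cyclotomic double zeta values, possibly multiplied by single $t$- or $\zeta$-values. Once this is completed, the reduction follows from Theorem \ref{thm-linearETS}, the preceding depth-two corollary on the parity of $\Ti_{p,q}$, and the standard conversion between cyclotomic Euler sums and cyclotomic multiple zeta values.
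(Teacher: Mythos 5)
Your proposal follows exactly the route the paper indicates: evaluate the quadratic contour integral \eqref{quadraticCMTV-ES} with $r_2(s)=s^{-q}$, extract the parity combination of $\tilde S_{p,r;q}$ from the residues at the nonzero integers, and convert to $\Ti_{p,q,r}$ via the stated relation $\Ti_{p,q,r}(x,y,z)=\frac{yz}{2^{p+q+r-3}}\ti_r(z)\tilde S_{p;q}(x;y)-\frac{yz}{2^{p+q+r-3}}\tilde S_{p,r;q}(x,z;y)$, with the remaining residues at $s=0$ and $s=-n-1/2$ checked to be of lower order. The paper itself leaves these computations to the reader, and your specialization $a=x$, $c=z$, $b=(xyz)^{-1}$ together with the rescaling by $-xyz^2/2^{p+q+r-3}$ reproduces the stated coefficient $xy^2z^3$ correctly.
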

From the definitions of cyclotomic multiple $T$-values and cyclotomic multiple zeta values, it is straightforward to observe that
\begin{align*}
\Ti_{k_1,\ldots,k_r}(x_1,\ldots,x_r)= (\sqrt{x_1})(\sqrt{x_2})^2\cdots (\sqrt{x_r})^r\sum_{\si_1,\ldots,\si_r\in\{\pm 1\}}\si_1 \si_2^2\cdots \si_r^r \Li_{k_1,\ldots,k_r}(\si_1\sqrt{x_1},\ldots,\si_r\sqrt{x_r}).
\end{align*}
Furthermore, by applying Panzer's parity result for multiple polylogarithms, the following parity conclusion regarding cyclotomic multiple $T$-values can be established:
\begin{thm}\label{thm-weakendedofCMTVparity}
Let $r>1$ and $x_1,\ldots,x_r$ be roots of unity, and $k_1,\ldots,k_r\geq 1$ with $(k_r,x_r)\neq (1,1)$. If $x_1,\ldots,x_r\in\{z\in \mathbb{C}: z^N=1\}$, then
\begin{align*}
&\Ti_{k_1,\ldots,k_r}(x_1,\ldots,x_r)-(-1)^{k_1+\cdots+k_r+r}(x_1x_2^2\cdots x_r^r)\Ti_{k_1,\ldots,k_r}\Big(x_1^{-1},\ldots,x_r^{-1}\Big)
\end{align*}
can be expressed in terms of a $\Q$-linear combination of cyclotomic multiple zeta values with depth less than $r$ and level less than or equal to $2N$.
\end{thm}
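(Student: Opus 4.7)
The plan is to mimic the derivation of Theorem \ref{thm-weakendedofCMtVparity}, with the multiple $t$-value decomposition replaced by the analogous decomposition for multiple $T$-values that the paper has just recorded, namely
\begin{align*}
\Ti_{\bfk}(\bfx) = \prod_{j=1}^{r}(\sqrt{x_j})^{j}\sum_{\sigma_1,\ldots,\sigma_r\in\{\pm 1\}}\sigma_1\sigma_2^2\cdots\sigma_r^r\,\Li_{\bfk}\bigl(\sigma_1\sqrt{x_1},\ldots,\sigma_r\sqrt{x_r}\bigr).
\end{align*}
Since each $x_j$ is an $N$-th root of unity, the chosen square roots $\sqrt{x_j}$, and therefore every argument $\sigma_j\sqrt{x_j}$, lie in the group of $2N$-th roots of unity; this observation is what will control the level at the end.

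First I would substitute $\bfx\mapsto \bfx^{-1}$ in the above identity, using $\sqrt{x_j^{-1}}=1/\sqrt{x_j}$ with a consistent branch choice, and then multiply through by the factor $x_1 x_2^2\cdots x_r^r = \prod_j(\sqrt{x_j})^{2j}$. A direct bookkeeping of the square-root powers produces the parallel expression
\begin{align*}
(x_1 x_2^2\cdots x_r^r)\,\Ti_{\bfk}(\bfx^{-1}) = \prod_{j=1}^{r}(\sqrt{x_j})^{j}\sum_{\sigma}\sigma_1\sigma_2^2\cdots\sigma_r^r\,\Li_{\bfk}\!\left(\tfrac{1}{\sigma_1\sqrt{x_1}},\ldots,\tfrac{1}{\sigma_r\sqrt{x_r}}\right).
\end{align*}
Subtracting $(-1)^{k_1+\cdots+k_r+r}$ times this identity from the $T$-value decomposition gives, for the target difference in the statement, a common prefactor $\prod_j(\sqrt{x_j})^j$ multiplying a sum over $\sigma\in\{\pm 1\}^r$ whose summand is exactly the Panzer difference
\begin{align*}
\Li_{\bfk}(\sigma_1\sqrt{x_1},\ldots,\sigma_r\sqrt{x_r})-(-1)^{k_1+\cdots+k_r+r}\Li_{\bfk}\!\left(\tfrac{1}{\sigma_1\sqrt{x_1}},\ldots,\tfrac{1}{\sigma_r\sqrt{x_r}}\right).
\end{align*}

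Next I would invoke Panzer's parity theorem (cited immediately above Theorem \ref{thm-weakendedofCMtVparity}) on each of the $2^r$ summands. That theorem asserts every such difference is a multiple polylogarithm of depth at most $r-1$; because the arguments $\sigma_j\sqrt{x_j}$ are $2N$-th roots of unity, the polylogarithms appearing in the reduction are in fact cyclotomic multiple zeta values of depth less than $r$ and level at most $2N$. Multiplying by the prefactor $\prod_j(\sqrt{x_j})^j$, which is itself a $2N$-th root of unity, and summing the $2^r$ contributions preserves this description, so we obtain precisely the $\mathbb{Q}$-linear combination asserted in the theorem.

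The main obstacle I expect is avoiding a silent inflation of the level and depth bounds: one must fix consistent branches for $\sqrt{x_j}$ and $\sqrt{x_j^{-1}}$ so that the two expansions genuinely share the same prefactor and so that the inversion $(\sigma_j\sqrt{x_j})^{-1}$ corresponds to the same summation index $\sigma$. A minor secondary point is that Panzer's theorem is formulated in a generic multi-variable complex domain from which certain rays are removed; since our arguments are roots of unity they may approach these cut loci, but this is harmless because the $T$-value side converges by the hypothesis $(k_r,x_r)\neq(1,1)$, and the identity in question is an equality of cyclotomic numbers that can be established by a standard limiting argument inside the punctured domain.
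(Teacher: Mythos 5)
Your proposal follows essentially the same route as the paper: decompose $\Ti_{\bfk}(\bfx)$ via the identity $\Ti_{\bfk}(\bfx)=\prod_{j}(\sqrt{x_j})^{j}\sum_{\sigma}\sigma_1\sigma_2^2\cdots\sigma_r^r\,\Li_{\bfk}(\sigma_1\sqrt{x_1},\ldots,\sigma_r\sqrt{x_r})$ recorded just before the theorem, write the target difference as the common prefactor times a sum of Panzer differences at $2N$-th roots of unity, and invoke Panzer's parity theorem. Your additional remarks on branch consistency and on Panzer's excluded rays are sensible refinements of details the paper leaves implicit, but the argument is the paper's own.
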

\begin{proof} By a direct calculation, one obtains
\begin{align*}
&\Ti_{k_1,\ldots,k_r}(x_1,\ldots,x_r)-(-1)^{k_1+\cdots+k_r+r}(x_1x_2^2\cdots x_r^r)\Ti_{k_1,\ldots,k_r}\Big(x_1^{-1},\ldots,x_r^{-1}\Big)\\
&=(\sqrt{x_1})(\sqrt{x_2})^2\cdots (\sqrt{x_r})^r\sum_{\si_1,\ldots,\si_r\in\{\pm 1\}}\si_1\si_2^2 \cdots \si_r^r \\&\qquad\qquad\quad\times \left(\Li_{k_1,\ldots,k_r}(\si_1\sqrt{x_1},\ldots,\si_r\sqrt{x_r})-(-1)^{k_1+\cdots+k_r+r}\Li_{k_1,\ldots,k_r}\left(\frac1{\si_1\sqrt{x_1}},\ldots,\frac1{\si_r\sqrt{x_r}}\right)\right).
\end{align*}
Therefore, by applying Panzer's parity theorem, the proof of the theorem can be completed.
\end{proof}
Finally, we propose one conjecture and one question regarding the parity of cyclotomic multiple $T$-values:
\begin{con}
Let $r>1$ and $x_1,\ldots,x_r$ be roots of unity, and $k_1,\ldots,k_r\geq 1$ with $(k_r,x_r)\neq (1,1)$. If $x_1,\ldots,x_r\in\{z\in \mathbb{C}: z^N=1\}$, then
\begin{align*}
&\Ti_{k_1,\ldots,k_r}(x_1,\ldots,x_r)-(-1)^{k_1+\cdots+k_r+r}(x_1x_2^2\cdots x_r^r)\Ti_{k_1,\ldots,k_r}\Big(x_1^{-1},\ldots,x_r^{-1}\Big)
\end{align*}
can be expressed in terms of a rational combination of products of cyclotomic multiple $T$-values and cyclotomic multiple zeta values with depth $\leq r-1$ and level $\leq N$.
\end{con}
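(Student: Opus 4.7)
The plan is to replace the Panzer-based argument of Theorem \ref{thm-weakendedofCMTVparity} by a direct contour-integration computation in the spirit of Sections 3--6, using the rational weight $r_2(s)=1/s^q$ in place of $1/(s+1/2)^q$. The conceptual advantage is that this contour-integration approach works directly with the roots of unity $x_j$ themselves, never passing through the square roots $\sqrt{x_j}$, so the lower-depth relations produced remain at level $\leq N$ rather than $\leq 2N$. This is precisely the gap between the conjecture and Theorem \ref{thm-weakendedofCMTVparity}.

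First, I would recursively express $\Ti_{k_1,\ldots,k_r}(x_1,\ldots,x_r)$ as a $\Q$-linear combination of products of lower-depth cyclotomic $T$-values and cyclotomic $\tilde{S}$-sums of order $r-1$, generalizing
\[\Ti_{p,q}(x,y)=\tfrac{y}{2^{p+q-2}}\tilde{S}_{p;q}(x;y),\quad \Ti_{p,q,r}(x,y,z)=\tfrac{yz}{2^{p+q+r-3}}\bigl(\ti_r(z)\tilde{S}_{p;q}(x;y)-\tilde{S}_{p,r;q}(x,z;y)\bigr).\]
Such identities follow from a stuffle-type split at the largest summation index $n_r$. In this way the conjecture reduces to proving a parity statement for cyclotomic $\tilde{S}$-sums of order $r-1$ modulo lower-order products.

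Second, I would establish that $\tilde{S}$-parity statement by evaluating, in the spirit of Theorem \ref{thm-parityc-CES-one}, the contour integral
\[\oint\limits_{(\infty)} \frac{\Phi(s;x)\,\phi^{(p_1-1)}(s+1/2;x_1)\cdots \phi^{(p_{r-1}-1)}(s+1/2;x_{r-1})}{(p_1-1)!\cdots(p_{r-1}-1)!\,s^q}(-1)^{p_1+\cdots+p_{r-1}-(r-1)}\,ds = 0,\]
summing residues at the simple poles at nonzero integers, at the pole of order $q+1$ at $s=0$, and at the poles of order $p_1+\cdots+p_{r-1}$ at $s=-n-1/2$ for $n\geq 0$. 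Using Lemmas \ref{lem-rui-xu-one}--\ref{lem-extend-rui-xu-two}, the nonzero-integer residues package into
\[x\,\tilde{S}_{p_1,\ldots,p_{r-1};q}\bigl(x_1,\ldots,x_{r-1};(xx_1\cdots x_{r-1})^{-1}\bigr)+(-1)^{p_1+\cdots+p_{r-1}+q+r-1}\tilde{S}_{p_1,\ldots,p_{r-1};q}\bigl(x_1^{-1},\ldots,x_{r-1}^{-1};xx_1\cdots x_{r-1}\bigr),\]
plus cross terms of strictly smaller order, while the residues at $s=0$ and $s=-n-1/2$ unfold via the Taylor expansions of $\Phi$ and $\phi^{(p-1)}$ into finite $\Z$-linear combinations of products of $\ti_k(x^{\pm1})$, $\Li_k(x_j^{\pm1})$, lower-order $\tilde{S}$-sums, and cyclotomic Euler sums, each of whose arguments is an $N$th root of unity. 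Setting the total sum to zero via Lemma \ref{lem-redisue-thm} gives the required parity identity.

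Third, I would translate the resulting $\tilde{S}$-parity identity back into a $\Ti$-identity by inverting the recursion of the first step; this is purely combinatorial. The main obstacle lies in the middle step: the residue at $s=0$ is multi-indexed and couples the Taylor coefficients of $\Phi(\cdot;x)$ with Cauchy convolutions of the Taylor coefficients of the $\phi^{(p_j-1)}(\cdot+1/2;x_j)$, and one must verify that every term it produces can be recognized, under the inversion from step one, as a product of cyclotomic multiple $T$-values and cyclotomic multiple zeta values of depth $\leq r-1$ and level $\leq N$, and that no genuinely depth-$r$ $\Ti$-value reappears on the right-hand side. Tracking this cancellation, together with the book-keeping of lower-order $\tilde{S}$-contributions through the inversion of step one, is what should ultimately yield the sharpened level bound $N$ (rather than $2N$) asserted in the conjecture.
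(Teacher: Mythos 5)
The statement you are addressing is left as a \emph{conjecture} in the paper: the authors prove only the weakened version (Theorem \ref{thm-weakendedofCMTVparity}, via Panzer's parity theorem, which yields cyclotomic MZVs of level $\leq 2N$) and the depth-2 and depth-3 corollaries obtained from the contour integral with $r_2(s)=s^{-q}$; there is no proof of the general statement to compare against. Your proposal is a reasonable strategy outline in the spirit of Sections 3--6, but as written it is not a proof, and the places where it is incomplete are exactly the places where the difficulty of the conjecture lives. First, your step one asserts a general recursion expressing $\Ti_{k_1,\ldots,k_r}$ through $\tilde{S}$-sums of order $r-1$ ``by a stuffle-type split at the largest summation index,'' but the denominators of $\Ti_{k_1,\ldots,k_r}$ are $(2n_j-j)^{k_j}$, which alternate between odd and even according to the parity of the position $j$; after shifting indices one is left with nested sums mixing half-integer and integer denominators in patterns that are not of $\tilde{S}$-type, and the paper itself only carries this conversion out for $r=2,3$. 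You would need to exhibit and prove the general identity, and then also show that the lower-depth byproducts of that identity (and of its inversion in step three) land in the span of products of cyclotomic multiple $T$-values and cyclotomic MZVs --- note that nested sums with two consecutive half-integer denominators, such as those arising from the stuffle expansion of an order-$(r-2)$ $\tilde{S}$-sum, are neither MTVs nor MZVs under the paper's definitions.

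Second, and more importantly, your middle step explicitly defers the key verification: you write that ``one must verify that every term it produces can be recognized \ldots as a product of cyclotomic multiple $T$-values and cyclotomic multiple zeta values of depth $\leq r-1$ and level $\leq N$'' and that tracking this ``is what should ultimately yield the sharpened level bound.'' That verification \emph{is} the conjecture. The residues at $s=-n-1/2$ produce, after summation over $n$, cyclotomic Euler sums $S_{\ldots;\ldots}(\cdot;xx_1\cdots x_{r-1})$ of order up to $r-2$ together with factors $\ti_{m+1}(x)-x\,\ti_{m+1}(x^{-1})$; the single $t$-values are indeed depth-one $T$-values up to a power of $2$, but the higher-order Euler sums reduce to cyclotomic MZVs whose arguments involve the product $xx_1\cdots x_{r-1}$ and its inverse, and one must still check depth and level bookkeeping after the substitution $x\mapsto(xx_1\cdots x_r)^{-1}$ analogous to Theorem \ref{thm-parityc-theorem}. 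Until the general recursion of step one is proved and the residue contributions of step two are shown term-by-term to lie in the claimed span, the argument has a genuine gap rather than a routine omission; what you have is a plausible programme for attacking the conjecture, consistent with how the authors obtained their depth $\leq 3$ corollaries, not a proof of it.
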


\begin{qu}
Similar to multiple polylogarithms, can the multiple $T$-polylogarithm function $\Ti_{\bfk}(\bfx)$ be analytically continued to the complex plane, yielding a generalization analogous to Panzer's parity theorem for multiple polylogarithms applied to the analytically continued multiple $T$-polylogarithm function?
\end{qu}

\medskip

{\bf Declaration of competing interest.}
The authors declares that they has no known competing financial interests or personal relationships that could have
appeared to influence the work reported in this paper.

{\bf Data availability.}
No data was used for the research described in the article.

{\bf Acknowledgments.}  Both authors of this paper sincerely thank Professor Masanobu Kaneko, their advisor during their studies at Kyushu University in Japan, for his consistent support and guidance. They wish Professor Kaneko a happy 65th birthday. Ce Xu is supported by the General Program of Natural Science Foundation of Anhui Province (Grant No. 2508085MA014). Ce Xu gratefully acknowledges the invitation from Professor Chengming Bai of Nankai University to the Chern Institute of Mathematics and from Professor Shaoyun Yi of Xiamen University to the Tianyuan Mathematical Center in Southeast China (TMSE). This work commenced during these visits.

\end{document}